\newtheorem{thm}{Theorem}[section]
\newtheorem{theorem}[thm]{Theorem}
\newtheorem{corollary}[thm]{Corollary}
\newtheorem{lemma}[thm]{Lemma}
\newtheorem{proposition}[thm]{Proposition}
\theoremstyle{definition}
\newtheorem{definition}[thm]{Definition}
\newtheorem{example}[thm]{Example}
\newtheorem{question}[thm]{Question}
\newtheorem{remark}[thm]{Remark}
\newtheorem{convention}[thm]{Convention}
\numberwithin{equation}{section}
\newcommand{\R}{\mathbb{R}}
\newcommand{\N}{\mathbb{N}}
\newcommand{\Q}{\mathbb{Q}}
\newcommand{\Z}{\mathbb{Z}}
\newcommand{\tr}{{\rm{tr}}}
\newcommand{\ol}{\overline}
\newcommand{\MCG}{{\rm{MCG}}}
\newcommand{\s}{\sigma}
\newcommand{\T}{\mathbb{T}}
\newcommand{\SL}{\rm{SL}}
\newcommand{\rs}{\stackrel{\rm r}{\rightharpoonup}} %right-split
\newcommand{\ls}{\stackrel{\rm l}{\rightharpoonup}} %left-split
\newcommand{\rh}{\rightharpoonup}
\newcommand{\tz}{\tau_{[\frac{0}{1}, \frac{1}{0}]}} %tau_0
\newcommand{\vs}{\left(\begin{smallmatrix} 1 \\ s \end{smallmatrix}\right)} %vector(1,s)
\newcommand{\vxy}{\left(\begin{smallmatrix} x \\ y \end{smallmatrix}\right)} %vector(x, y)
\newcommand{\abcd}{\left(\begin{smallmatrix}a & c \\ b & d \end{smallmatrix}\right)} %matrix {{a,b},{c,d}}
\newcommand{\I}{\rm{I}}
\newcommand{\f}{{\bar{f}}}
\begin{document}

\title{Complete description of Agol cycles of pseudo-Anosov $3$-braids}

\author{Keiko Kawamuro}
\address {Department of Mathematics, University of Iowa, Iowa City, IA 52242}
\email{keiko-kawamuro@uiowa.edu}

\author{Eiko Kin} 
\address {Center for Education in Liberal Arts and Sciences, Osaka University, Toyonaka, Osaka 560-0043, Japan}
\email{kin.eiko.celas@osaka-u.ac.jp}

\keywords{%
pseudo-Anosov,  measured laminations, measured train tracks, periodic splitting sequences, Agol cycles}

\date{\today}
%\subjclass[2010]{57M27; 57M15}
%\keywords{}

\maketitle

\begin{abstract}
The equivalence class of an Agol cycle is a conjugacy invariant of a pseudo-Anosov map. 
Mosher defined train tracks in the torus associated to Farey intervals and 
investigated %an intriguing を消しました。無機質な文章が、ここだけ感情が入っている感じになって浮き立ってしまう気がしたからです。
the relation between the train tracks and the continued fraction expansions of quadratic irrational numbers.   
We study Mosher's train tracks and %give a complete description of もcomplete の意味が曖昧だから消しました。題名のcomplete をみて、「なんだ？」と思ったとしても、abstract を読めばわかるように以下のように変えました。
describe Agol cycles of all the pseudo-Anosov $3$-braids.
\end{abstract}

\section{Introduction}
%\marginal{
%We use the torus $\T$, once-punctured torus $\Sigma_{1,1}$
%\\
%We will say Agol cycle OF $\phi$ and periodic splitting sequence OF $\phi$. 
%\\
%measured train track IN a surface. We use IN.}
%\cite{Wu18}

Let $\Sigma= \Sigma_{g, n}$ be an orientable surface with genus $g$ and $n$ punctures. 
Let $\MCG(\Sigma)$ be the mapping class group of $\Sigma$. 
By the Nielsen-Thurston classification, any homeomorphism $\phi : \Sigma \rightarrow \Sigma$ is isotopic to a homeomorphism 
which is either {\em periodic}, {\em reducible} or {\em pseudo-Anosov} \cite{FarbMargalit12, thurston:mappingtori}. 
If $\phi$ is a pseudo-Anosov map there exist  stable and unstable measured laminations 
$(\mathcal{L}^{\tt s}, \nu^{\tt s})$ and $(\mathcal{L}^{\tt u}, \nu^{\tt u})$
and the dilatation $\lambda>1$ such that 
$$\phi(\mathcal{L}^{\tt s}, \nu^{\tt s})= (\mathcal{L}^{\tt s}, \lambda \nu^{\tt s}) \hspace{2mm} \mbox{and} \hspace{2mm} 
\phi(\mathcal{L}^{\tt u}, \nu^{\tt u})= (\mathcal{L}^{\tt u}, \lambda^{-1} \nu^{\tt u}).$$

A {\em train track} $\tau$ is a finite embedded $C^1$ graph in  the surface $\Sigma$ 
equipped with a well-defined tangent line at each vertex. It also requires that no component of $\Sigma \setminus \tau$ is an immersed nullgon, monogon, bigon, once-punctured nullgon, or annulus \cite{PennerHarer92}. 
In this paper following Agol \cite{Agol11} we exclusively study trivalent train tracks, 
i.e.,  train tracks all of whose vertices have valence $3$. 
Edges of a train track are called {\em branches} and vertices  are called {\em switches}.  
Figure~\ref{fig_switch-omega}-(2),(3) illustrates a trivalent train track $\omega_0$ in $\Sigma_{0,4}$ such that 
each component of $\Sigma_{0,4} \setminus \omega_0$ is a once-punctured monogon.

A branch $e$ is {\em large at a switch $v$} of $e$ 
if in a small neighborhood of $v$ each immersed arc in the train track through $v$ intersects the interior of $e$. 
%smooth arc through  $v$ だけだとすごく長いarcでまた$v$ に戻ってきてしまうものもあるのでin a small neighborhood of $v$　を加えました。
Otherwise $e$ is {\em small}  {\em at the switch $v$}. 
A branch $e$ is {\em large} (resp. {\em small}) 
if $e$ is large (resp. small) at the both switches.  
Otherwise $e$ is {\em mixed}. 

A {\em measured train track} $(\tau, \mu)$ in $\Sigma$ is a train track $\tau$ together with a transverse measure $\mu$. 
The transverse measure $\mu$ is a function which assigns a weight to each branch. At every switch the weights satisfy the {\em switch condition}: the sums of the weights on each side of the switch are equal to each other. See the equality $\mu(c)= \mu(a)+ \mu(b)$ in Figure~\ref{fig_switch-omega}-(1).

\begin{figure}[t]
\centering
\includegraphics[height=4cm]{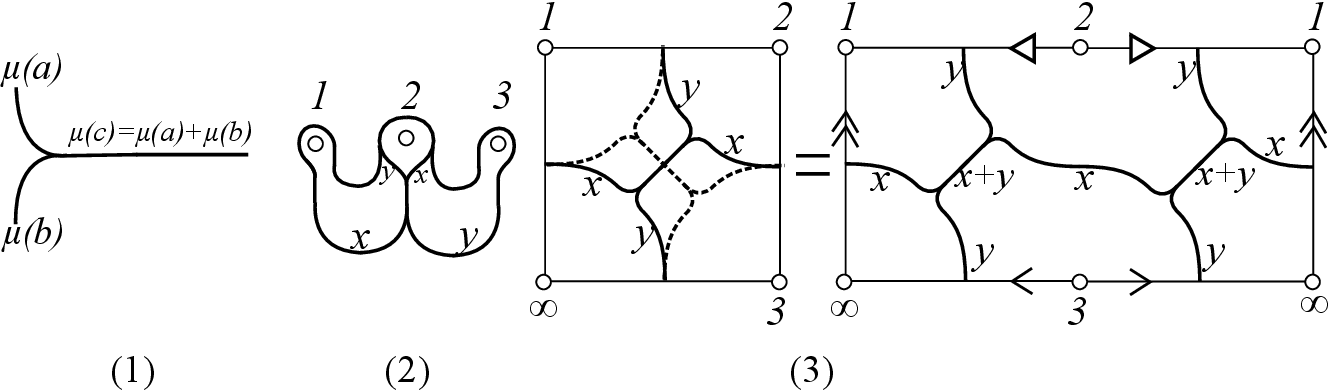}
\caption{(1) Switch condition. 
(2), (3) The measured train track $(\omega_0, \vxy)$ in $\Sigma_{0, 4}$. 
%In (2) the puncture %$\circ \infty$ 
%$\infty$ is not depicted. 
In (3) $\Sigma_{0,4}$ is viewed as a square pillowcase with the corners removed. Dotted arcs are branches on the backside.}
\label{fig_switch-omega}
\end{figure}

Left/right splitting, folding and shifting are operations on $(\tau, \mu)$ that give 
a new measured train track. 
\begin{definition}
\label{definition:3operations}
\begin{enumerate}
\item 
%(Splitting and folding) 
Figure~\ref{fig_split_shift}-(1) shows a neighborhood of a large branch of $(\tau, \mu)$
whose weight is $x+y= z+w$ by the switch condition. 
If $z>x$ equivalently $y>w$ (resp. $x>z$ equivalently $w>y$)
the measured train track $(\tau, \mu)$ admits a {\em left} (resp. {\em right}) {\em splitting} at the large branch.  
\item 
%(Shifting) 
A {\em shifting} (at a mixed branch) is an operation on $(\tau, \mu)$ 
as depicted in  Figure~\ref{fig_split_shift}-(2). 
\end{enumerate}
%%%%%%%%%
\begin{figure}[ht]
\centering
\includegraphics[width=8.2cm]{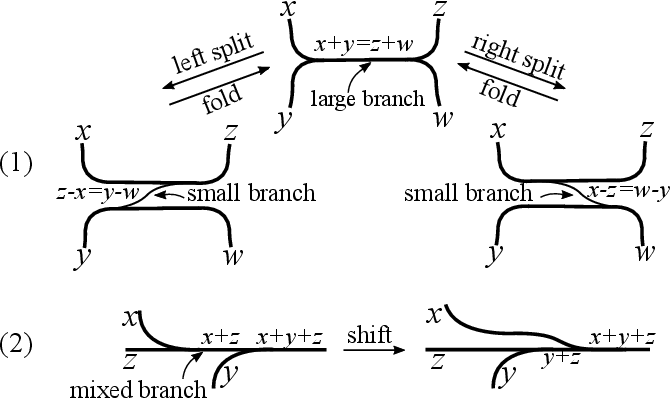}
\caption{(1) Left splitting when $z>x$ ($\Leftrightarrow y>w$), right splitting  when $x>z$ ($\Leftrightarrow w>y$) at a large branch, and folding at a small branch.
(2) Shifting at a mixed branch.}
\label{fig_split_shift}
\end{figure}
\end{definition}

A {\em maximal splitting} \cite{Agol11} of a measured train track $(\tau, \mu)$ is a set of  simultaneous splittings along all the large branches of maximal $\mu$-weight. 
We denote $(\tau, \mu)\rightharpoonup (\tau', \mu')$ 
if $(\tau', \mu')$ is the result of the maximal splitting.
Consecutive $n$ maximal splittings
$$(\tau, \mu) \rightharpoonup (\tau_1, \mu_1) \rightharpoonup \cdots \rightharpoonup (\tau_n, \mu_n) $$ 
is denoted by $(\tau,\mu) \rightharpoonup^n (\tau_n, \mu_n)$. 
If all the splittings in a maximal splitting are of left (resp. right) type then we write  $(\tau, \mu)\ls (\tau', \mu')$ (resp. $(\tau, \mu)\rs (\tau', \mu')$) and 
say that $(\tau, \mu)$ admits a {\em left} (resp. {\em right}) {\em maximal splitting}.

Let $\phi: \Sigma \to \Sigma$ be a homeomorphism. 
For a measured train track $(\tau, \mu)$ in $\Sigma$, we define a new measured train track 
$\phi(\tau, \mu)$ in $\Sigma$ by 
$\phi (\tau, \mu) := ( \phi(\tau), \phi_*(\mu))$, 
where the measure $\phi_*(\mu)$ is defined by $\phi_*(\mu)(e):= \mu(\phi^{-1}(e))$ for every branch $e$ in the train track $\phi(\tau)$. 

The following theorem by Agol is a starting point of our study.

\begin{theorem}[Theorem~3.5 in \cite{Agol11}]
\label{thm:Agol}
Let $\phi: \Sigma \to \Sigma$ be a pseudo-Anosov map with dilatation $\lambda$. 
Let $(\tau, \mu)$ be a measured train track 
suited to the stable measured lamination of $\phi$. 
Then  there exist  $n\geq 0$ and $m >0$ such that 
$$(\tau, \mu)  \rightharpoonup^n (\tau_n, \mu_n) \rightharpoonup^m (\tau_{n+m}, \mu_{n+m}),$$ 
where 
$(\tau_{n+m}, \mu_{n+m})= \phi(\tau_n, \lambda^{-1}\mu_n) = (\phi(\tau_n), \lambda^{-1} \phi_*(\mu_n)).$
\end{theorem}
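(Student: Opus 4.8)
The plan is to view maximal splitting as a \emph{deterministic} map on the set of measured train tracks suited to $(\mathcal{L}^{\tt s},\nu^{\tt s})$, to show that this set is finite modulo $\MCG(\Sigma)$ so that the splitting sequence issuing from $(\tau,\mu)$ is eventually periodic, and then to identify the period with $\phi$ itself using the $\phi$-invariance of $\mathcal{L}^{\tt s}$ together with the dilatation $\lambda$.

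First I would record four facts. \textbf{(a) Canonicity.} By Definition~\ref{definition:3operations} the splitting direction at a large branch is dictated by the inequality between the incoming weights, and maximality selects the large branches of maximal $\mu$-weight; hence $(\tau,\mu)\rightharpoonup(\tau',\mu')$ depends on $\mu$ only, and is unchanged if $\mu$ is rescaled. \textbf{(b) Invariance and monotonicity.} Each splitting in a maximal splitting is taken ``in the direction of'' $\mathcal{L}^{\tt s}$, so the carried lamination does not change and every $(\tau_k,\mu_k)$ is again suited to $(\mathcal{L}^{\tt s},\nu^{\tt s})$; a split does not increase the number of branches, which therefore stays bounded; and the total weight $\|\mu_k\|=\sum_e\mu_k(e)$ strictly decreases. \textbf{(c) Equivariance.} For any homeomorphism $\psi$ of $\Sigma$, $(\sigma,\eta)\rightharpoonup(\sigma',\eta')$ implies $\psi(\sigma,\eta)\rightharpoonup\psi(\sigma',\eta')$, since $\psi$ transports the combinatorial data of (a). \textbf{(d) $\phi$ stays in the class.} Because $\phi(\mathcal{L}^{\tt s},\nu^{\tt s})=(\mathcal{L}^{\tt s},\lambda\nu^{\tt s})$, the measured train track $\phi(\tau_k,\lambda^{-1}\mu_k)=(\phi(\tau_k),\lambda^{-1}\phi_*(\mu_k))$ carries $\phi(\mathcal{L}^{\tt s})=\mathcal{L}^{\tt s}$ with the transverse measure induced by $\nu^{\tt s}$, so it is again suited to $(\mathcal{L}^{\tt s},\nu^{\tt s})$, and by (a),(c) its maximal splitting sequence is $\big(\phi(\tau_{k+j}),\lambda^{-1}\phi_*(\mu_{k+j})\big)_{j\ge0}$.

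Next I would reduce the theorem to a \textbf{Key Lemma}: two maximal splitting sequences of measured train tracks suited to $(\mathcal{L}^{\tt s},\nu^{\tt s})$ with the same number of branches eventually coincide. Granting it, apply the lemma to the sequences starting from $(\tau,\mu)$ and from $\phi(\tau,\lambda^{-1}\mu)$: for some $a,b\ge0$ the term $(\tau_a,\mu_a)$ equals the $b$-th term of the second sequence, which by (d) is $\phi(\tau_b,\lambda^{-1}\mu_b)$. Comparing total weights gives $\|\mu_a\|=\lambda^{-1}\|\mu_b\|<\|\mu_b\|$, while (b) would force $\|\mu_a\|\ge\|\mu_b\|$ if $a\le b$; hence $a>b$, and with $n:=b$ and $m:=a-b>0$ this is exactly $(\tau_{n+m},\mu_{n+m})=\phi(\tau_n,\lambda^{-1}\mu_n)$. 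To prove the Key Lemma I would use that each sequence consists of trivalent train tracks with a bounded number of branches, each filling $\Sigma$ and carrying the fixed, filling, uniquely ergodic lamination $\mathcal{L}^{\tt s}$; there are only finitely many such up to $\MCG(\Sigma)$, and on each the transverse measure representing $(\mathcal{L}^{\tt s},\nu^{\tt s})$ is unique by unique ergodicity. Determinism (a) plus the pigeonhole principle then makes each sequence eventually periodic modulo $\MCG(\Sigma)$, realized by some $\psi\in\MCG(\Sigma)$; since a maximal splitting sequence is efficient, the nested fibered neighborhoods satisfy $\bigcap_j N(\tau_{n+j})=\mathcal{L}^{\tt s}$, so $\psi(\mathcal{L}^{\tt s})=\mathcal{L}^{\tt s}$ and $\psi$ lies in the stabilizer of $[\mathcal{L}^{\tt s}]$, which is virtually cyclic; this collapses the periodic cycles of the various sequences into a single one and forces the sequences to meet.

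The main obstacle is the Key Lemma, and within it two standard-but-delicate points: (1) the finiteness, up to $\MCG(\Sigma)$, of trivalent train tracks of bounded branch number carrying the given filling lamination (this is also where one passes between isotopy and $\MCG$); and (2) the efficiency statement $\bigcap_j N(\tau_{n+j})=\mathcal{L}^{\tt s}$, i.e.\ that maximal splitting — which only splits, never folds — genuinely exhausts $\mathcal{L}^{\tt s}$, so that the periodic cycle is unique and, via the virtually cyclic stabilizer and the dilatation, its translation is pinned down to be $\phi$ with the normalizing factor $\lambda^{-1}$.
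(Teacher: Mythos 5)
This statement is not proved in the paper: it is quoted verbatim as Theorem~3.5 of Agol \cite{Agol11} and used as a black box (the paper's own contributions begin with Lemma~\ref{lemma:0A} and the analysis of Mosher's train tracks). So there is no in-paper proof to compare yours against; I can only assess your sketch against Agol's original argument, whose broad strategy it does follow: determinism and rescaling-invariance of maximal splitting, equivariance under homeomorphisms, the observation that $\phi(\tau,\lambda^{-1}\mu)$ is again suited to $(\mathcal{L}^{\tt s},\nu^{\tt s})$ so its splitting sequence is the $\phi$-image of the original one, a coincidence lemma for two splitting sequences suited to the same measured lamination, and the weight comparison $\|\mu_a\|=\lambda^{-1}\|\mu_b\|$ to force $m=a-b>0$. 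That reduction is correct and cleanly organized.

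The genuine gap is exactly where you flag it: the Key Lemma. Your argument for it --- finitely many suitable trivalent tracks up to $\MCG(\Sigma)$, pigeonhole to get eventual periodicity modulo some $\psi\in\MCG(\Sigma)$, then $\psi$ stabilizes $[\mathcal{L}^{\tt s}]$ and the stabilizer is virtually cyclic, which ``collapses the periodic cycles \ldots and forces the sequences to meet'' --- does not close. Eventual periodicity of each sequence modulo $\MCG(\Sigma)$, even with both periods driven by powers of the same pseudo-Anosov, does not by itself imply that the two sequences become \emph{equal} as measured train tracks in $\Sigma$ (as opposed to lying in the same mapping-class orbit); and equality on the nose is what the weight comparison and the conclusion $(\tau_{n+m},\mu_{n+m})=\phi(\tau_n,\lambda^{-1}\mu_n)$ require. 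The assertion $\bigcap_j N(\tau_{n+j})=\mathcal{L}^{\tt s}$ is also doing real work and is not a formal consequence of ``splitting never folds''; it is the substantive convergence statement. Agol's route to the coincidence of the two sequences is different in character: he shows directly that if two measured train tracks are both suited to the same measured lamination, then after enough maximal splittings their fibered neighborhoods, and hence the tracks themselves, agree --- an argument carried out at the level of the lamination and its local leaf structure rather than by a finiteness-plus-stabilizer count. If you want to complete your version, you need either that local argument or an explicit reason why two tracks in the same $\MCG$-orbit that both carry $(\mathcal{L}^{\tt s},\nu^{\tt s})$ with matching weights must be isotopic, neither of which is supplied by ``virtually cyclic stabilizer.''
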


See also Agol-Tsang \cite{AgolTsang22}. 
We note that in \cite{Wu18}  Wu worked out orbifold theory and gave more efficient encoding of splitting sequences for braids.

For the terminology {\em suited to,} see Definition~\ref{definition:suited}. 
Theorem~\ref{thm:Agol} states that the maximal splitting sequence is eventually periodic 
modulo the action of $\phi$ and a scaling by the dilatation. 
We call the maximal splitting sequence 
$$ (\tau_n, \mu_n) \rightharpoonup^m (\tau_{n+m}, \mu_{n+m}) = \phi(\tau_n, \lambda^{-1}\mu_n) \rightharpoonup  \cdots$$ 
%in Theorem~\ref{thm:Agol} 
a {\em periodic splitting sequence} of $\phi$. 
We may call the finite subsequence 
$$(\tau_n, \mu_n) \rightharpoonup^m (\tau_{n+m}, \mu_{n+m}) =  \phi(\tau_n, \lambda^{-1}\mu_n)$$ 
an {\em Agol cycle} of $\phi$ and say that  the {\em length} of the Agol cycle is $m$. 
Clearly, the maximal splitting sequence 
$(\tau_{n+1}, \mu_{n+1}) \rightharpoonup^m (\tau_{n+m+1}, \mu_{n+m+1})  \rightharpoonup \cdots$ 
starting at $(\tau_{n+1}, \mu_{n+1})$ is also a periodic splitting sequence of $\phi$, 
and the finite sequence 
$(\tau_{n+1}, \mu_{n+1}) \rightharpoonup^m (\tau_{n+m+1}, \mu_{n+m+1})$ is  an Agol cycle of $\phi$ as well.

\begin{definition}
\label{definition_ combinatorially-isomorphic}
Let $\phi, \phi':\Sigma \to \Sigma$ be pseudo-Anosov maps with 
periodic splitting sequences 
$$ \mathscr{P}: (\tau_n, \mu_n) \rh^m (\tau_{n+m}, \mu_{n+m})= \phi(\tau_n, \lambda^{-1}\mu_n)\  \rightharpoonup  \cdots$$
 of  $\phi$ and 
$$\mathscr{P}': (\tau'_{n'}, \mu'_{n'}) \rh^{m'} (\tau'_{n'+m'}, \mu'_{n'+m'})= \phi'(\tau'_{n'},  (\lambda')^{-1}\mu'_{n'})\ \rightharpoonup  \cdots$$ 
of $\phi'$. 
We say that $\mathscr{P}$ and $\mathscr{P}'$ are {\em combinatorially isomorphic} (\cite{HodgsonIssaSegerman16})  
%\marginal{memo for eiko: I do not want to use $\tau_0$ here. We should keep $\tau_0$ for the Mosher's base train track.}
if $m=m'$ and
there exists  an orientation-preserving diffeomorphism 
$h:\Sigma \to \Sigma$, integers $p, q \in \Z_{\geq 0}$ and $c\in \R_{>0}$ such that the following conditions (1) and (2) hold.
\begin{enumerate}
\item[(1)] 
$\phi'=h \circ \phi \circ h^{-1}$. 

\item[(2)] 
$h(\tau_{p+i}, \mu_{p+i})=(\tau'_{q+i}, c\mu'_{q+i})$ for all $i\in \Z_{\geq 0}$.
\end{enumerate} 
We say that Agol cycles  
$(\tau_n, \mu_n) \rh^m (\tau_{n+m}, \mu_{n+m})$ of $\phi$ and $(\tau'_{n'}, \mu'_{n'}) \rh^{m'} (\tau'_{n'+m'}, \mu'_{n'+m'})$ of $\phi'$ are {\em equivalent} if $m=m'$ and there exists a diffeomorphism 
$h:\Sigma \to \Sigma$,  integers $p \geq n,$ and $p' \geq n'$, and a number $c>0$ such that 
$h(\tau_{n+p}, \mu_{n+p})=(\tau'_{n'+ p'}, c\mu'_{n'+ p'})$. 
This implies the above condition (2), see Lemma~\ref{lemma_comb_eq}.
\end{definition}

By the definition, Agol cycles of $\phi$ and $\phi'$ are equivalent  if 
periodic splitting sequences $\mathscr{P}$ of $\phi$ and $\mathscr{P}'$ of $\phi'$ are combinatorially isomorphic. 
Hodgson-Issa-Segerman proved the following.

\begin{theorem}[Theorem~5.3 in  \cite{HodgsonIssaSegerman16}] 
\label{theorem_Hodgson-Issa-Segerman}

The mapping classes of 
pseudo-Anosov maps 
$\phi, \phi':\Sigma \to \Sigma$ are conjugate in $\MCG(\Sigma)$ 
if and only if $\mathscr{P}$ and $\mathscr{P}'$  are combinatorially isomorphic. 
\end{theorem}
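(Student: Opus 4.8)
The plan is to prove the biconditional in two directions. The forward direction --- combinatorially isomorphic implies conjugate --- is immediate: if $\mathscr{P}$ and $\mathscr{P}'$ are combinatorially isomorphic then condition~(1) of Definition~\ref{definition_ combinatorially-isomorphic} already provides an orientation-preserving diffeomorphism $h$ with $\phi'=h\circ\phi\circ h^{-1}$, and passing to mapping classes shows that $[\phi]$ and $[\phi']$ are conjugate in $\MCG(\Sigma)$. All of the content is in the converse, which I would organize into a ``transport'' step (pushing a periodic splitting sequence of $\phi$ forward by the conjugating map) followed by a ``uniqueness of the Agol cycle'' step.

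\emph{Transport.} Assume $[\phi]$ and $[\phi']$ are conjugate in $\MCG(\Sigma)$ and fix an orientation-preserving diffeomorphism $h_0$ with $\phi'=h_0\circ\phi\circ h_0^{-1}$ (replacing the given representatives within their mapping classes if necessary; such a replacement does not affect the periodic splitting sequences up to combinatorial isomorphism). First I would check that $h_0$ carries the invariant data of $\phi$ to that of $\phi'$: from $\phi'\circ(h_0)_*=(h_0)_*\circ\phi$ on measured laminations one sees that $(h_0)_*(\mathcal{L}^{\tt s},\nu^{\tt s})$ is stretched by $\phi'$ with factor $\lambda$ and $(h_0)_*(\mathcal{L}^{\tt u},\nu^{\tt u})$ is contracted with factor $\lambda^{-1}$, so by uniqueness of the pseudo-Anosov structure $\lambda=\lambda'$ and $(h_0)_*(\mathcal{L}^{\tt s},\nu^{\tt s})$ is a positive scalar multiple of the stable measured lamination of $\phi'$. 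Then I would record two naturality facts: (a) the relation ``suited to'' of Definition~\ref{definition:suited} is diffeomorphism-equivariant, so $h_0(\tau,\mu)$ is suited to the stable measured lamination of $\phi'$; and (b) maximal splitting is natural, i.e.\ $(\sigma,\rho)\rightharpoonup(\sigma_1,\rho_1)$ implies $h_0(\sigma,\rho)\rightharpoonup h_0(\sigma_1,\rho_1)$, because a diffeomorphism sends large branches to large branches and preserves the weights, hence preserves the set of large branches of maximal weight. Applying $h_0$ termwise to the splitting sequence of Theorem~\ref{thm:Agol} for $\phi$ then produces a periodic splitting sequence $\mathscr{Q}$ of $\phi'$ whose $i$-th term is $h_0(\tau_{n+i},\mu_{n+i})$ and whose Agol cycle again has length $m$.

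\emph{Uniqueness of the Agol cycle --- the main obstacle.} It remains to show that $\mathscr{Q}$ and $\mathscr{P}'$, which are now two periodic splitting sequences of the \emph{same} pseudo-Anosov map $\phi'$, are combinatorially isomorphic; I expect this to be the hardest step. The intended approach: every measured train track occurring in a periodic splitting sequence of $\phi'$ is, after rescaling the measure, suited to the stable measured lamination of $\phi'$ (a maximal splitting of a measured train track suited to a measured lamination is again suited to it), so its weights are prescribed by the stable transverse measure up to a single global scalar. Consequently maximal splitting is deterministic on such train tracks and, by naturality, commutes with $\phi'$, and by Theorem~\ref{thm:Agol} there are only finitely many of them modulo the action of $\phi'$; thus maximal splitting descends to a self-map of a finite set. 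One then has to prove that the maximal splitting sequences underlying $\mathscr{Q}$ and $\mathscr{P}'$ eventually land in the \emph{same} cycle of this self-map, which is the genuinely delicate point; I would attack it by showing that any two measured train tracks carrying the fixed stable lamination admit a common splitting refinement, so that the two sequences ``pinch down'' onto the lamination and must coincide after finitely many steps, up to the action of $\phi'$ and rescaling.

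\emph{Conclusion.} Given the uniqueness step, one obtains $m=m'$ together with integers $a,b\ge 0$, an integer $\ell$, and a number $c>0$ such that $h_0(\tau_{n+a+i},\mu_{n+a+i})=(\phi')^{\ell}\bigl(\tau'_{n'+b+i},\,c\,\mu'_{n'+b+i}\bigr)$ for all $i\ge 0$. Setting $h:=(\phi')^{-\ell}\circ h_0$, which is again an orientation-preserving diffeomorphism, one checks $h\circ\phi\circ h^{-1}=(\phi')^{-\ell}\circ\phi'\circ(\phi')^{\ell}=\phi'$ and $h(\tau_{p+i},\mu_{p+i})=(\tau'_{q+i},\,c\,\mu'_{q+i})$ with $p=n+a$ and $q=n'+b$, which together are exactly conditions (1) and (2) of Definition~\ref{definition_ combinatorially-isomorphic}. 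Hence $\mathscr{P}$ and $\mathscr{P}'$ are combinatorially isomorphic, which completes the converse.
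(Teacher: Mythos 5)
This statement is not proved in the paper at all: it is quoted verbatim as Theorem~5.3 of Hodgson--Issa--Segerman \cite{HodgsonIssaSegerman16}, so there is no in-paper argument to compare yours against. Judged on its own terms, your architecture is the standard one and matches what Hodgson--Issa--Segerman actually do: the forward direction is immediate from condition~(1) of Definition~\ref{definition_ combinatorially-isomorphic}, and the converse splits into transporting a periodic splitting sequence through the conjugating diffeomorphism (your naturality facts (a) and (b) are correct and are essentially Lemma~\ref{lemma:0A}) followed by a uniqueness statement for periodic splitting sequences of a fixed pseudo-Anosov map.

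The genuine gap is in the step you yourself flag as delicate. Your proposed mechanism --- that any two measured train tracks suited to the same stable lamination ``admit a common splitting refinement, so that the two sequences pinch down onto the lamination and must coincide after finitely many steps'' --- is not a proof and is not obviously true as stated: maximal splitting sequences starting from different suited train tracks are not refinements of one another in any evident sense, and a common refinement would not by itself force the two \emph{maximal} splitting sequences to merge. What is actually needed is the stronger content of Agol's Theorem~3.5 and its proof: for all sufficiently large $i$ the train track $\tau_i$ is \emph{canonically determined} by the stable measured lamination together with a scale parameter (Agol's construction collapses the lamination along a parameterized family of partial foliations), so that any two maximal splitting sequences of suited train tracks eventually land in the same $\phi'$-orbit of this canonical family, up to scaling. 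Without importing that canonical description (or reproving it), your finite-set argument only shows each sequence eventually enters \emph{some} cycle of the splitting self-map, not that the two sequences enter the \emph{same} cycle. Your transport step and your concluding bookkeeping (absorbing the power $(\phi')^{\ell}$ into $h$ to get conditions (1) and (2) simultaneously) are fine once that uniqueness is granted.
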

As a consequence,  the equivalence class of an Agol cycle of $\phi$ is a conjugacy invariant of the pesudo-Anosov map $\phi$. 
The length of an Agol cycle of $\phi$ is also a conjugacy invariant and we  call it the {\em Agol cycle length} of $\phi$.

Each element of the $3$-braid group $B_3$ induces an element of the mapping class group $\mathrm{MCG}(\Sigma_{0,4})$ of a $4$-punctured sphere 
(Section~\ref{subsection_braid-mappingclass}). 
By Murasugi's classification of $3$-braids \cite[Proposition~2.1]{Murasugi74},  
a braid $b \in B_3$ is pseudo-Anosov (i.e. a representative of the corresponding mapping class in $\mathrm{MCG}(\Sigma_{0,4})$ is pseudo-Anosov) if and only if $b$ is conjugate to 
a braid 
$\Delta^{2j} \s_1^{p_1} \s_2^{-q_1} \cdots \s_1^{p_k} \s_2^{-q_k}$ 
%(called Murasugi representative) 
where $\Delta=\s_1\s_2\s_1$, $j$ is an integer, and $p_1, q_1, \cdots, p_k, q_k $ and $k$ are positive integers. 
Moreover, $j$ and $k$ are unique and the pairs $(p_1, q_1), \cdots, (p_k, q_k)$ are unique up to cyclic permutation.

In this paper we give a complete description of the Agol cycles of pseudo-Anosov $3$-braids:

\begin{theorem}[cf. Theorem~\ref{theorem-braid}]
\label{theorem-braid-intro}
Let $\beta=\Delta^{2j} \s_1^{p_1} \s_2^{-q_1} \cdots \s_1^{p_k} \s_2^{-q_k}$ be a pseudo-Anosov $3$-braid with dilatation $\lambda$. 
Let $\ell= p_1 + q_1 + \cdots + p_k + q_k$ and $s$ be the number admitting a purely periodic continued fraction expansion 
$[ \ol{q_k: p_k, q_{k-1}, p_{k-1}, \cdots, q_1, p_1}].$ 

\begin{enumerate}
\item[(1)]
The measured train track 
$(\omega_0, \nu_0)$ with $\nu_0=\vs$ (Figure~\ref{fig_switch-omega}-(3))  
is suited to the stable measured lamination of $\beta$. 

\item[(2)] 
Let $A = L^{q_k} R^{p_k} \cdots L^{q_1} R^{p_1}$ where $L= \left(\begin{smallmatrix}1 & 0 \\ 1 & 1 \end{smallmatrix}\right)$ and 
$R= \left(\begin{smallmatrix}1 & 1 \\0 & 1\end{smallmatrix}\right).$ Then $A \vs = \lambda \vs$.

\item[(3)] 
Starting with the measured train track $(\omega_0, \nu_0)$, 
the first  $\ell+1$ terms 
\begin{equation}\label{eq:Agol_cycle}
(\omega_0, \nu_0)
\ls^{q_k} \ 
\rs^{p_k} \ 
\cdots \ 
\ls^{q_1} \ 
\rs^{p_1}
(\omega_\ell, \nu_\ell) 
\end{equation}
of  the maximal splitting sequence forms a length $\ell$ Agol cycle of $\beta$. 
\end{enumerate}
\end{theorem}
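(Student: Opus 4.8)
The plan is to reduce everything to a book-keeping computation on the torus, using the standard correspondence between measured train tracks on $\Sigma_{0,4}$ carried by $\omega_0$ and measured train tracks on $\T^2$ carried by Mosher's train tracks, under which the $\omega_0$-weight vector $\vs$ corresponds to a slope on $\T^2$ and a left (resp. right) maximal splitting of $(\omega_0,\mu)$ corresponds to multiplication of the weight vector by $L^{-1}$ (resp. $R^{-1}$), equivalently to one step of the continued-fraction (Euclidean) algorithm. First I would establish item (1): the braid $\beta$ acts on $\Sigma_{0,4}$ and one checks directly (or cites Mosher's correspondence together with Murasugi's normal form) that $\omega_0$ carries the stable lamination of $\beta$, i.e.\ that $(\omega_0,\nu_0)$ is suited to it in the sense of Definition~\ref{definition:suited}; the slope $\nu_0=\vs$ is exactly the attracting fixed slope of the linear map $A$ acting on $\T^2$, which is the content of item (2). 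For item (2) itself, I would simply verify $A\vs=\lambda\vs$: the vector $\vs$ with $s=[\,\ol{q_k:p_k,q_{k-1},\dots,q_1,p_1}\,]$ is by definition the positive eigenvector of $L^{q_k}R^{p_k}\cdots L^{q_1}R^{p_1}$ read off from the periodic continued fraction, and $\lambda$, the Perron eigenvalue of $A$, coincides with the dilatation of $\beta$ because $A$ is (conjugate to) the transition matrix on the train-track weights and hence computes the dilatation; this matches the classical formula for dilatations of pseudo-Anosov $3$-braids via $2\times2$ matrices.

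The heart of the argument is item (3). I would proceed by analyzing a single block. Starting from $(\omega_0,\vs)$ with weight vector normalized so that the large branch has maximal weight, I claim that the maximal splitting is of left type and that $q_k$ consecutive left maximal splittings transform the weight vector by $L^{-q_k}$ while returning the train track to (a shifted copy of) $\omega_0$ — here one uses that each left split multiplies the slope vector by $\left(\begin{smallmatrix}1&0\\-1&1\end{smallmatrix}\right)=L^{-1}$, and that after exhausting the run of left splittings the combinatorial type of the track is again $\omega_0$ (possibly composed with shifts, which do not change the carried lamination). Then the next maximal splitting switches to right type, and $p_k$ right maximal splittings apply $R^{-p_k}$; continuing, after the full sequence $\ls^{q_k}\rs^{p_k}\cdots\ls^{q_1}\rs^{p_1}$ the weight vector has been multiplied by
\begin{equation*}
R^{-p_1}L^{-q_1}\cdots R^{-p_k}L^{-q_k}\ =\ A^{-1},
\end{equation*}
so the resulting weighted track is $(\omega_\ell,A^{-1}\vs)=(\omega_\ell,\lambda^{-1}\vs)$ by item (2). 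It remains to identify $(\omega_\ell,\nu_\ell)$ with $\beta(\omega_0,\lambda^{-1}\nu_0)$: since $\beta$ carries $(\omega_0,\nu_0)$ to a weighted track with the same underlying lamination scaled by $\lambda$ on the unstable side and by $\lambda^{-1}$ on the stable side, and since the maximal splitting sequence of a track suited to the stable lamination is determined by that lamination, the track $\omega_\ell$ after $\ell$ maximal splittings must be $\beta(\omega_0)$ up to the diffeomorphism realizing $\beta$, with the measure $\lambda^{-1}\phi_*(\nu_0)$; invoking Theorem~\ref{thm:Agol} this exhibits \eqref{eq:Agol_cycle} as an Agol cycle, and counting the splittings gives length $\ell=\sum(p_i+q_i)$.

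I expect the main obstacle to be the combinatorial bookkeeping in the single-block step: one must show that a full run of left (or right) maximal splittings on $\omega_0$ (a) never terminates early and never changes type mid-run — i.e.\ the run length is exactly $q_k$, governed by the integer part in the continued fraction — and (b) returns the underlying train track to $\omega_0$, modulo shifts, rather than to some other trivalent track in $\Sigma_{0,4}$. This requires a careful local picture of $\omega_0$ in the pillowcase model of Figure~\ref{fig_switch-omega}-(3), tracking which branch is large after each split and checking the switch conditions; the shifts that appear must be shown to be harmless (they fix the carried lamination and commute appropriately with the splitting operation). Once this block lemma is in hand, iterating over the $2k$ runs and assembling the matrix product $A^{-1}$ is routine, and the identification with $\beta(\omega_0,\lambda^{-1}\nu_0)$ follows from Theorem~\ref{thm:Agol} together with the uniqueness of Murasugi's normal form.
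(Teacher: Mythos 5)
Your overall route --- Mosher's torus train tracks, the dictionary between left/right maximal splittings and multiplication of the weight vector by $L^{-1}$/$R^{-1}$, and the Euclidean--algorithm bookkeeping that assembles the product $A^{-1}$ and hence $\lambda^{-1}\vs$ --- is the same as the paper's. The gap is in how you close the loop. Your "block lemma'' asserts that a run of $q$ left maximal splittings returns the underlying track to "$\omega_0$ modulo shifts''; that is false. The resulting track is $\s_2^{-q}(\omega_0)$, the image of $\omega_0$ under a nontrivial mapping class (in the pillowcase picture the slopes of the small branches change), and it is not obtained from $\omega_0$ by shiftings. The precise one-step identities $\omega_0\ls\s_2^{-1}(\omega_0)$ and $\omega_0\rs\s_1(\omega_0)$, verified by an explicit picture and then iterated using the commutation of splittings with homeomorphisms (Lemma~\ref{lemma:0A}), are exactly what tie the torus letters $L,R$ to the braid letters $\s_2^{-1},\s_1$ (the paper packages this as $\pi\circ g_L=\s_2^{-1}\circ\pi$ and $\pi\circ g_R=\s_1\circ\pi$ through the double cover $\Sigma_{1,4}\to\Sigma_{0,4}$, Proposition~\ref{prop:pi}). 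Without that dictionary you have produced a splitting sequence governed by the matrix $A$, but you have not shown it is an Agol cycle of the \emph{braid} $\beta$; note also that the factor $\Delta^{2j}$ is harmless only because $\Delta^2$ generates the kernel of $B_3\to\MCG(\Sigma_{0,4})$, a point your sketch never uses.

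Second, your final identification of $(\omega_\ell,\nu_\ell)$ with $\beta(\omega_0,\lambda^{-1}\nu_0)$ does not follow from the argument given. Theorem~\ref{thm:Agol} only guarantees eventual periodicity for \emph{some} $n\ge 0$ and \emph{some} period $m>0$; it does not tell you that $n=0$ or that $m=\ell$, and the fact that $\beta(\omega_0,\lambda^{-1}\nu_0)$ is suited to the same stable lamination does not place it at position $\ell$ of the maximal splitting sequence. What must be proved --- and what the paper proves directly by composing the block identities above --- is the on-the-nose equality of embedded measured tracks $(\omega_\ell,\nu_\ell)=\phi_A(\omega_0,\lambda^{-1}\nu_0)$, with $\omega_\ell=\phi_A(\omega_0)$ coming from the iterated identities and $\nu_\ell=A^{-1}\vs=\lambda^{-1}\vs$ coming from the eigenvector equation of part (2). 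Once the block lemma is stated and proved in that corrected form, the rest of your outline (run lengths governed by the continued fraction of $s$, which requires the purely periodic expansion established in Proposition~\ref{prop:s}) does go through.
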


In other words, $(\omega_0, \nu_0)$ admits $q_k$ left maximal splittings consecutively. 
Then the resulting measured train track $(\omega_{q_k}, \nu_{q_k})$
admits $p_k$ right maximal splittings consecutively. 
After repeating this $k-1$ more times we obtain the measured train track $(\omega_\ell, \nu_\ell)$. 
%\marginal{should we remove this paragraph?}
%この3行ですが removeしないで このままの方が安全と思います. 
%\ls^{q_k}  や \rs^{p_k}  の記号のことを intro では説明してないですし, この3行がなかったら, 先まで読まない限り 記号の意味がわからないからです. 

%Theorem~\ref{theorem-braid-intro} states that if one chooses  $(\omega_0, \left(\begin{smallmatrix} 1\\ s \end{smallmatrix}\right))$ as the initial measured train track then the maximal splitting sequence is {\color{blue}purely} periodic. 
% 特に必要ないと思います. purely periodic という言い方は quadratic irrational の continued fractional expansion でよく使われますし. 
%(equivalently has no pre-periodic part). 

\begin{figure}[htbp]
\centering
\includegraphics[height=3.4cm]{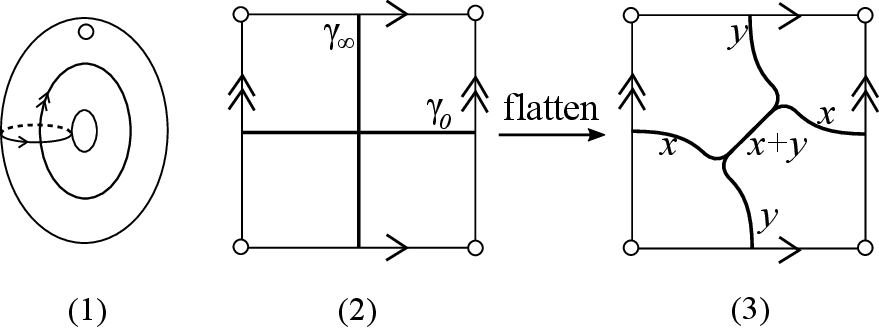}
\caption{(1) Once-punctured torus $\Sigma_{1,1}$. 
(2) Simple closed curves of slope $0$ and $\infty$. 
(3) The measured train track $(\tau_0, \vxy)$ in $\Sigma_{1,1}$.}
\label{fig_base}
\end{figure}

Here is a geometric description of the above train tracks $\omega_i$ $(i=0, \cdots, \ell)$: 
Each $(\omega_i, \nu_i)$ can be written by 
$(\omega_i, \nu_i)= (\omega_i, {K_i}^{-1} \vs)$ 
where $K_i=\abcd \in \SL(2; \Z)$. 
In the once-punctured torus $\Sigma_{1,1}$ we consider simple closed curves $\gamma_{\frac{b}{a}}$ of slope $\frac{b}{a}$ and $\gamma_{\frac{d}{c}}$ of slope $\frac{d}{c}$ that intersect at a single point. 
Flattening the obtuse angles at the intersection (Figure~\ref{fig_flatten}-(2),(3)) gives a train track $\tau_i$ in $\Sigma_{1,1}$. 
Two copies of $\tau_i$ nicely placed in $\Sigma_ {0,4}$  gives the train track $\omega_i$. 
For example in the case $i=0$ we have 
$(\omega_0, \nu_0)= (\omega_0, {K_0}^{-1} \vs)$ with 
$K_0= \left(\begin{smallmatrix}1 & 0 \\ 0 & 1 \end{smallmatrix}\right) $. 
Figure~\ref{fig_base}-(3) illustrates $\tau_0$ and the two copies of $\tau_0$ give the train track $\omega_0$ as in Figure~\ref{fig_switch-omega}-(3).

As a corollary of Theorem~\ref{theorem-braid-intro} we have the following. 
%The following result states that the equivalence class of an Agol cycle of a pseudo-Anosov $3$-braid is a complete conjugacy invariant in 
%$B_3/Z(B_3)$ where $Z(B_3)$ is the center of $B_3$. 
%この文章は Corollary の証明のところで書いています. 同じことを2度書く必要ないので comment out. 
%where 
%the only-if-part is a consequence of Theorem~\ref{theorem_Hodgson-Issa-Segerman}. 

\noindent
{\bf Corollary~\ref{cor:A}.} {\em 
Pseudo-Anosov $3$-braids $\beta$ and $\beta'$ are conjugate in $B_3/Z(B_3)$ where $Z(B_3)$ is the center of $B_3$ 
if and only if their Agol cycles are equivalent. }

%Contrary to Corollary~\ref{cor:A}, では強すぎるというか、何か矛盾を指摘している感じになってしまうので変えました。
On the other hand, the equivalence class of an Agol cycle is not a complete conjugacy invariant of a pseudo-Anosov element of 
$\mathrm{MCG}(\Sigma_{1,1})$ (Proposition~\ref{prop_trace-negative}).

Given a pseudo-Anosov map $\phi:\Sigma\to\Sigma$, let $\Sigma^\circ \subset \Sigma$ denote the surface obtained  from $\Sigma$ by removing all the singular points of the stable/unstable foliations for $\phi$, and let $\phi^\circ : \Sigma^\circ \to  \Sigma^\circ$ be the restriction of $\phi$. 
Agol uses an Agol cycle of $\phi$ to give a  veering  ideal triangulation of the mapping torus of $\phi^\circ$ \cite{Agol11}. 
Every maximal splitting in the Agol cycle (\ref{eq:Agol_cycle}) takes place in two large branches for each maximal splitting, which yields two tetrahedra in Agol's construction.
Thus, for the $3$-braid case, Theorem~\ref{theorem-braid-intro} states that twice the Agol cycle length is exactly the number of tetrahedra in the Agol's triangulation.

To prove Theorem~\ref{theorem-braid-intro}, 
we give a thorough %complete 
description of an Agol cycle of every 
pseudo-Anosov map on $\Sigma_{1,1}$  up to the hyperelliptic involution (Theorem~\ref{theorem-2}). 
It is known that the veering ideal triangulation of the %its 
mapping torus obtained from the Agol cycle 
is the canonical triangulation constructed in Lackenby \cite{Lackenby03} and Gu\'{e}ritaud \cite{Gueritaud06}.

%\begin{question}
%For a pseudo-Anosov 3-braid $\beta=\Delta^{2m} \s_1^{p_1} \s_2^{-q_1} \cdots \s_1^{p_k} \s_2^{-q_k}$, is $2(p_1+\dots+q_k)$ the minimal number of tetrahedra among all possible ideal triangulations of the mapping torus $M_\beta$ where $\beta$ is seen as an element of $\MCG(\Sigma_{0,4})$? 
%
%この question ですが 論文で触れなくて良いと思います. 
%Tilman-Rubinstein 達の On minimal ideal triangulations of cusped hyperbolic 3-manifolds をよく調べれば minimal であることが言えるかもしれませんが, 論文の中で命題として
%書かれてないならば あえて触れなくて良いと思います. 
%

\begin{question}[Margalit \cite{Margalit16}] 
For a fixed surface, what are the possible lengths of Agol cycles? 
How does the length of the Agol cycle related to other invariants? 
\end{question}

Theorem~\ref{theorem-braid-intro} implies that 
any integer greater than $1$ can be realized as the Agol cycle length of some pseudo-Anosov $3$-braid. 
%Let $n$ and $m$ be positive integers. 
%Theorems~\ref{theorem-2} and \ref{theorem-braid} answers that for surfaces $\Sigma_{0, 2nm+2}$ and $\Sigma_{1, 4nm}$ 
%any integer $>1$ can be realized as the Agol cycle length of some pseudo-Anosov map. 
Theorem~\ref{theorem-braid-intro}   partially answers the second question by Margalit as follows.  

\noindent
{\bf Theorem~\ref{thm:Garside}.} {\em 
For every pseudo-Anosov $3$-braid $\beta$, 
the Agol cycle length of $\beta$, the Garside canonical length of any element in the super summit set ${\rm SSS}(\beta)$ are the same.}

\begin{question}\label{question:length1}
Is there a pseudo-Anosov map  whose Agol cycle length is $1$?
\end{question} 

We note that a length $1$ Agol cycle does not necessarily mean that the induced veering triangulation of the mapping torus consists of one ideal tetrahedron, since a maximal splitting may contain multiple splittings simultaneously.

The paper is organized as follows. 
In Section~\ref{section_Preliminaries}, we recall basic definitions and facts regarding measured train tracks and laminations. 
In Section~\ref{sec:torus}, we introduce train tracks in  $\Sigma_{1,1}$ associated to Farey  intervals following Mosher \cite{Mosher03,Mosher2003}. 
Then we study Agol cycles of pseudo-Anosov maps on $\Sigma_{1,1}$. 
In Section~\ref{sec:applications}, we prove Theorem~\ref{theorem-braid-intro}. 
We also discuss a relation between Garside canonical lengths and Agol cycle lengths for $3$-braids and prove Theorem~\ref{thm:Garside}.

\section{Preliminaries}
\label{section_Preliminaries}

The mapping class group $\MCG(\Sigma)$ of a surface $\Sigma= \Sigma_{g,n}$ is the group of isotopy classes of orientation preserving homeomorphisms of $\Sigma$ which preserve the punctures setwise. 
For simplicity, we may not distinguish between a homeomorphism $\phi:\Sigma \to \Sigma$ and its mapping class $[\phi] \in \MCG(\Sigma)$. 

Measured train tracks are useful tools to encode measured laminations. 
Measured train tracks 
$(\tau, \mu)$, $(\tau', \mu')$ in $\Sigma$ are {\em equal} (and write $(\tau, \mu) = (\tau', \mu')$)
if there exists a diffeomorphism $f: \Sigma \rightarrow \Sigma$  isotopic to the identity map on $\Sigma$ such that 
$f(\tau, \mu)= (\tau', \mu')$.

Measured train tracks $(\tau, \mu)$, $(\tau', \mu')$ in $\Sigma$ are {\em equivalent} if they are related to each other by a sequence of splittings, foldings, shiftings (Definition~\ref{definition:3operations}) and isotopies.  
Equivalence classes of measured train tracks are in one-to-one correspondence with measured laminations \cite[Theorem~2.8.5]{PennerHarer92}. 
%%
%%Agol は PennerHarer の Theorem 2.8.5 を引用しているので PennerHarer をここでは引用する方が better と判断しました. 
%%
For example, all the five measured train tracks in $\Sigma_{0,4}$ in Figure~\ref{fig_five_tracks} are equivalent for any $s>0$.

\begin{figure}[htbp]
\begin{center}
\includegraphics[height=1.8cm]{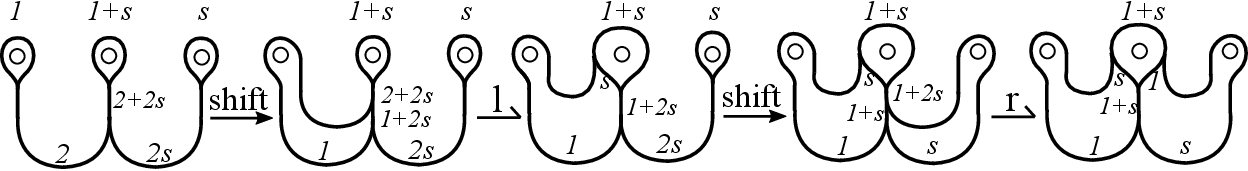}
\caption{
%Equivalent measured train tracks in $\Sigma_{0,4}$.
The last measured train track  is $(\omega_0, \left(\begin{smallmatrix} 1 \\ s \end{smallmatrix}\right))$ in Figure~\ref{fig_switch-omega}-(2).}
\label{fig_five_tracks}
\end{center}
\end{figure}

We adopt the following conventions. 
\begin{enumerate}
\item 
When we regard a maximal splitting $(\tau, \mu)\rightharpoonup (\tau', \mu')$ 
as an operation on the measured train track 
we may write
$ (\tau', \mu')=\  \rightharpoonup (\tau, \mu)$.  

\item 
Similarly, using the operator notation we may write $n$ consecutive left (resp. right) maximal splittings $(\tau, \mu) \ls^n (\tau_n, \mu_n)$ (resp. $(\tau, \mu) \rs^n (\tau_n, \mu_n)$) as $$(\tau_n, \mu_n) = \  \ls^n (\tau, \mu), \hspace{5mm} 
(\mbox{resp. } (\tau_n, \mu_n) =\   \rs^n (\tau, \mu)).$$
\item 
We may also write a finite sequence $(\tau, \mu) \ls^n (\tau_n, \mu_n) \rs^m (\tau_{n+m}, \mu_{n+m})$ as $$ (\tau_{n+m}, \mu_{n+m}) = \ \rs^m  \circ  \ls^n (\tau, \mu);$$
that is, first apply $ \ls^n$ to $(\tau, \mu)$ then next apply $\rs^m$ to obtain $ (\tau_{n+m}, \mu_{n+m})$. 
\end{enumerate}

The next lemma states that the operation $\rightharpoonup$ and a homeomorphism $\phi: \Sigma \rightarrow \Sigma$ commute 
on measured train tracks in $\Sigma$. 

\begin{lemma}\label{lemma:0A}
Let $(\tau, \mu)$ be a measured train track in $\Sigma$. 
Let $\phi: \Sigma \rightarrow \Sigma$ be an orientation-preserving homeomorphism. 
If $(\tau, \mu)$ admits consecutive $n$ maximal left splittings, then we have 
$$(\phi  \circ \ls^n) (\tau, \mu) = (\ls^n \circ \phi)  (\tau ,\mu).$$ 
A parallel statement holds for right splittings. 
\end{lemma}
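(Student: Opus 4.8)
The plan is to reduce everything to the case $n=1$ and then argue that a single maximal splitting commutes with the homeomorphism $\phi$. For $n=1$, the key observation is that the maximal splitting of $(\tau,\mu)$ is determined entirely by the combinatorics of $\tau$ together with the $\mu$-weights: namely, one locates all large branches of maximal $\mu$-weight and simultaneously splits each according to whether the local weight inequality $z>x$ (left) or $x>z$ (right) holds, as in Definition~\ref{definition:3operations}(1). Since $\phi$ is an orientation-preserving homeomorphism, it sends $\tau$ to a train track $\phi(\tau)$ with the same abstract branch/switch structure, carries large branches to large branches, and by the definition $\phi_*(\mu)(e)=\mu(\phi^{-1}(e))$ it preserves all weights branch-by-branch. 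Hence a branch $e$ is large of maximal $\mu$-weight in $(\tau,\mu)$ if and only if $\phi(e)$ is large of maximal $\phi_*(\mu)$-weight in $\phi(\tau,\mu)$, and the local weight inequality determining left versus right splitting is identical at $e$ and at $\phi(e)$. Therefore performing the maximal splitting and then applying $\phi$ yields the same measured train track as applying $\phi$ and then performing the maximal splitting: $(\phi\circ{\rightharpoonup})(\tau,\mu)=({\rightharpoonup}\circ\phi)(\tau,\mu)$, and if the splitting was entirely left then so is its image under $\phi$, giving $(\phi\circ\ls)(\tau,\mu)=(\ls\circ\phi)(\tau,\mu)$.

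The general case follows by induction on $n$. Suppose $(\tau,\mu)$ admits $n$ consecutive maximal left splittings
$$(\tau,\mu)\ls(\tau_1,\mu_1)\ls\cdots\ls(\tau_n,\mu_n).$$
By the $n=1$ case applied to $(\tau,\mu)$ we get $\phi(\tau_1,\mu_1)=\ls\phi(\tau,\mu)$, so in particular $\phi(\tau,\mu)$ admits a maximal left splitting whose result is $\phi(\tau_1,\mu_1)$. Then $\phi(\tau_1,\mu_1)$ admits $n-1$ consecutive maximal left splittings because $(\tau_1,\mu_1)$ does and $\phi$ is a homeomorphism respecting the structure; by the induction hypothesis $(\phi\circ\ls^{n-1})(\tau_1,\mu_1)=(\ls^{n-1}\circ\phi)(\tau_1,\mu_1)$. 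Composing, $(\phi\circ\ls^n)(\tau,\mu)=\phi(\ls^{n-1}(\tau_1,\mu_1))=\ls^{n-1}(\phi(\tau_1,\mu_1))=\ls^{n-1}(\ls\phi(\tau,\mu))=(\ls^n\circ\phi)(\tau,\mu)$, as desired. The statement for right splittings is verbatim the same with $\rs$ in place of $\ls$.

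The only real point requiring care — and the step I expect to be the main (minor) obstacle — is the claim that $\phi$ genuinely commutes with a single splitting \emph{on the nose} rather than merely up to equivalence of measured train tracks. This is where one uses that a maximal splitting is a canonical local surgery: the resulting train track and its weights are produced by a deterministic recipe from the local picture in Figure~\ref{fig_split_shift}-(1), and an orientation-preserving homeomorphism intertwines these local pictures exactly (it does not reverse the roles of $x$ and $z$, which is why orientation-preservation is needed). One should also check that there is no ambiguity when several large branches of maximal weight are split simultaneously, but since the branches involved have disjoint interiors the simultaneous surgery is well defined and commutes with $\phi$ component by component. Once this local compatibility is in hand the lemma is immediate.
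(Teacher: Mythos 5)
Your proof is correct and follows the same route as the paper: the paper's argument is simply that a (maximal) splitting is supported in small disk neighborhoods of the large branches and is therefore intertwined by $\phi$, and then one iterates $n$ times. Your write-up just spells out the details the paper leaves implicit (preservation of large branches, of maximal weight, and of the left/right dichotomy — where you correctly flag that orientation-preservation is what keeps a left splitting left).
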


\begin{proof}
Since a left splitting operation is supported in a small disk neighborhood of the large branch, they commute with any homeomorphism $\phi:\Sigma\to\Sigma$.
This gives $(\phi  \circ \ls) (\tau, \mu) = (\ls \circ \phi)  (\tau ,\mu)$.
Repeating this for $n$ times, we obtain $(\phi  \circ \ls^n) (\tau, \mu) = (\ls^n \circ \phi)  (\tau ,\mu).$
\end{proof}

As a corollary of Lemma~\ref{lemma:0A} we have the following. 
\begin{lemma}\label{lemma_comb_eq}
Let $(\tau_n, \mu_n) \rh (\tau_{n+1}, \mu_{n+1})  \rh\cdots$ and 
$(\tau'_{n'}, \mu'_{n'}) \rh (\tau'_{n'+1}, \mu'_{n'+1})  \rh\cdots$  be maximal splitting sequences. 
If there exist an orientation-preserving diffeomorphism $h:\Sigma\to\Sigma$, integers $p\geq n$, $q\geq n'$, and a positive number $c$ such that $h(\tau_p, \mu_p)=(\tau'_q, \mu'_q)$ then
$h(\tau_{p+i}, \mu_{p+i})=(\tau'_{q+i}, \mu'_{q+i})$ for all $i\geq 0$.
\end{lemma}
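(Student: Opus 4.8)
The plan is to show that the maximal splitting operation $\rh$ is \emph{natural}: it commutes with every orientation-preserving diffeomorphism of $\Sigma$ and with rescaling of the transverse measure by a positive constant. Granting this, Lemma~\ref{lemma_comb_eq} is a one-line induction on $i$. The only real content is the naturality claim, and within it the point that the set of large branches of maximal $\mu$-weight, together with the left/right type of the splitting at each of them, is carried along by an orientation-preserving diffeomorphism and is unaffected by rescaling; the locality of each individual split --- already exploited in the proof of Lemma~\ref{lemma:0A} --- then promotes this to the desired commutation.

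For the rescaling invariance, I would fix a measured train track $(\tau,\mu)$ and $c>0$. Multiplying every weight by $c$ changes neither which branches are large nor which large branches carry the maximal weight, and at each such branch the inequality $z>x$ versus $x>z$ (notation of Figure~\ref{fig_split_shift}-(1)) that decides between a left and a right splitting is insensitive to rescaling. Hence $(\tau,\mu)$ and $(\tau,c\mu)$ admit the same maximal splitting, so $(\tau,\mu)\rh(\tau_1,\mu_1)$ implies $(\tau,c\mu)\rh(\tau_1,c\mu_1)$.

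For the diffeomorphism invariance, let $h:\Sigma\to\Sigma$ be an orientation-preserving diffeomorphism. Since $h$ carries $\tau$ onto $h(\tau)$ and $h_*\mu$ assigns to $h(e)$ the weight $\mu(e)$, the large branches of maximal weight of $h(\tau,\mu)$ are exactly the $h$-images of those of $(\tau,\mu)$; and because $h$ preserves orientation, a large branch splitting to the left (resp.\ right) in $(\tau,\mu)$ has $h$-image splitting to the left (resp.\ right) in $h(\tau,\mu)$. Each individual splitting is supported in a small disk neighborhood of its large branch, and these neighborhoods are pairwise disjoint, so the argument in the proof of Lemma~\ref{lemma:0A} applies to the whole maximal splitting and yields: $(\tau,\mu)\rh(\tau_1,\mu_1)$ implies $h(\tau,\mu)\rh h(\tau_1,\mu_1)$. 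Taking $h$ isotopic to the identity also shows that $\rh$ is well defined on equality classes of measured train tracks, so all the identities below are meaningful.

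Finally I would induct on $i\geq 0$. The case $i=0$ is the hypothesis $h(\tau_p,\mu_p)=(\tau'_q,c\mu'_q)$. Assume $h(\tau_{p+i},\mu_{p+i})=(\tau'_{q+i},c\mu'_{q+i})$; since $p\geq n$ and $q\geq n'$, the maximal splittings $(\tau_{p+i},\mu_{p+i})\rh(\tau_{p+i+1},\mu_{p+i+1})$ and $(\tau'_{q+i},\mu'_{q+i})\rh(\tau'_{q+i+1},\mu'_{q+i+1})$ occur in the given sequences. Applying $\rh$ to both sides of the inductive hypothesis, the diffeomorphism invariance turns the left side into $h(\tau_{p+i+1},\mu_{p+i+1})$ and the rescaling invariance turns the right side into $(\tau'_{q+i+1},c\mu'_{q+i+1})$, giving $h(\tau_{p+i+1},\mu_{p+i+1})=(\tau'_{q+i+1},c\mu'_{q+i+1})$ and completing the induction. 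The main obstacle, as noted, is the naturality of $\rh$; everything after that is bookkeeping.
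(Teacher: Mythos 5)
Your proposal is correct and takes essentially the same route as the paper, which states this lemma as an immediate corollary of Lemma~\ref{lemma:0A} (the locality/commutation argument you spell out) followed by the obvious induction on $i$. Your extra care with the rescaling constant $c$ is sensible --- the constant appears in the lemma's hypotheses and in Definition~\ref{definition_ combinatorially-isomorphic}-(2) even though it was dropped from the displayed equations --- and your observation that the set of maximal-weight large branches and their left/right types are preserved by an orientation-preserving diffeomorphism and by rescaling is exactly the content being implicitly used.
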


\begin{definition}
\label{definition:suited}
Let $(\mathcal{L}, \nu)$ be a measured lamination in $\Sigma$, 
and let $(\tau, \mu)$ be a measured train track in $\Sigma$. 
Then $(\mathcal{L},\nu)$ is {\em suited} to 
$(\tau, \mu)$,   
and we also say that $(\tau, \mu)$ is {\em suited} to $(\mathcal{L},\nu)$, 
if there exists a differentiable map 
$f: \Sigma \rightarrow \Sigma$ 
homotopic to the identity map on $\Sigma$ with the following conditions: 
\begin{itemize}
\item 
$f(\mathcal{L}) = \tau$. 

\item 
$f$ is non-singular on the tangent spaces to the leaves of $\mathcal{L}$. 

\item 
If $p$ is an interior point of a branch $e$ of $\tau$ 
then $\nu(f^{-1}(p)) = \mu(e)$. 
\end{itemize}
\end{definition}

\section{Once-punctured torus}\label{sec:torus}

\subsection{Mapping class group of once-punctured torus via $\SL(2;\Z)$}

The quotient space $\R^2 / \Z^2$ gives the torus, $\T = S^1 \times S^1$. 
If no confusion occurs, under the quotient map (or a covering map) 
$$Q:\R^2 \to \R^2/\Z^2 = \T$$ 
the image of $(x, y)\in \R^2$ will be denoted by the same $(x, y)$. 
We may think the torus is obtained by the square $[0,1]\times [0,1]$ whose parallel boundary edges are identified.

The special linear group $\SL(2;\Z)$ is generated by 
$$L= \left(\begin{array}{cc}1 & 0 \\1 & 1\end{array}\right) \hspace{5mm} \mbox{and} \hspace{5mm} 
R= \left(\begin{array}{cc}1 & 1 \\0 & 1\end{array}\right).$$
For  $A\in \SL(2; \Z)$, the linear map 
$A: \R^2 \to \R^2$; $\left(\begin{smallmatrix} x \\ y\end{smallmatrix}\right) \mapsto A\left(\begin{smallmatrix} x \\ y\end{smallmatrix}\right)$
induces a well defined  homeomorphism  $\f_A: \T \to \T$. 
Note that $\f_A$ fixes the point $(0,0) $. 
The restriction of $\f_A$ to the once-punctured torus 
$\Sigma_{1, 1}=\T \setminus \{(0,0)\} = ({\Bbb R}^2 \setminus {\Bbb Z}^2)/  {\Bbb Z}^2$  
yields a homeomorphism, denoted by 
$$f_A: \Sigma_{1,1} \to \Sigma_{1,1}.$$
We have $Q \circ A \vxy = f_A \circ Q \vxy$ and the isomorphism %homomorphism %\Theta: 
$\SL(2; \Z) \to  \MCG(\Sigma_{1,1})$ 
%\marginal{$\Theta$ is removed because it is not used anywhere}
which takes $A$ to $f_A$ gives a groups isomorphism.

Observe that  $f_L$ induced by $L$ is the left-handed  Dehn twist about a simple closed curve with the slope $\infty$ and 
$f_R$ induced by $R$ is the right-handed Dehn twist about a simple closed curve with the slope $0$. 
(cf. Figure~\ref{fig_lrmap} for $f_L$ and $f_R$.)

Let $\tr(A)$ denote the trace of $A \in \SL(2;\Z)$. 
We have
$|\tr(A)| >2$ if and only if 
the induced map $\f_A: \T \to \T$ is Anosov \cite[Section~13.1]{FarbMargalit12}.  
A parallel statement for the punctured torus $\Sigma_{1,1}$ is that 
$|\tr(A)| >2$ if and only if 
the induced map $f_A: \Sigma_{1,1} \to \Sigma_{1,1}$ is pseudo-Anosov. 
The condition $\tr(A) > 2$ is equivalent to that 
$A $ possesses distinct eigenvalues $\lambda>1$ and $0 < \lambda^{-1} < 1$. 
We call $\lambda$ the {\em expanding eigenvalue} of $A$. Since the pseudo-Anosov map $f_A$ is restriction of the Anosov map $\f_A$, their dilatations are the same and equal to the expanding eigenvalue $\lambda$. 
Let $\vs$ be an eigenvector with respect to $\lambda$; that is, $A\vs=\lambda\vs$. 
We call $s$ the {\em slope} of the eigenvectors with respect to $\lambda$ for $A$. 
%The Perron-Frobenius theory shows that $s>0$.
%\marginal{I comment out "The Perron-Frobenius theory shows that $s>0$". See my comment in Japanese}
%%
%%A は $L^{q_k} R^{p_k} \cdots L^{q_1} R^{p_1}$ という形の 行列と共役ですが A それ自身の entry が positive かどうかはわからないです. 
%%なので (にょっとしたらこの場合は s>0 がこの場合も正しいのかもしれないけど) すぐにはわからないので, ここでは書かない方が良いです
%%もちろん $A= L^{q_k} R^{p_k} \cdots L^{q_1} R^{p_1}$ ならば PF theorem から s >0 は正しいですが.
%%

The following proposition is  well-known (cf. \cite[Proposition~2.1]{Gueritaud06}). 
% \cite[Theorem~2]{Issa14}. 

\begin{proposition}
\label{prop:conjugacy-class-SL2Z}
Let $A \in \SL(2;\Z)$ with $\tr(A) > 2$. 
Then $A$ is conjugate to $L^{q_k} R^{p_k} \cdots L^{q_1} R^{p_1}$
for some positive integers $p_1, q_1, \cdots, p_k, q_k$ and $k$.   
Moreover, the ordered pairs $(p_1, q_1), \cdots, (p_k, q_k)$ are unique up to cyclic permutation.
\end{proposition}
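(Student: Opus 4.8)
\textbf{Proof strategy for Proposition~\ref{prop:conjugacy-class-SL2Z}.}
The plan is to use the action of $\SL(2;\Z)$ on the Farey tessellation of the hyperbolic plane $\mathbb{H}^2$ (equivalently, on the set of slopes $\Q\cup\{\infty\}$), where $L$ and $R$ generate the stabilizer behavior around the vertices $\infty$ and $0$. Since $\tr(A)>2$, the map $\bar f_A$ is Anosov with an attracting fixed slope $s^+$ and a repelling fixed slope $s^-$ on the circle of slopes, both of which are quadratic irrationals; $A$ acts on $\mathbb{H}^2$ as a hyperbolic isometry whose axis $\gamma_A$ is the geodesic from $s^-$ to $s^+$. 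The axis crosses a bi-infinite sequence of triangles of the Farey tessellation, and as one traverses this sequence each triangle is exited through an edge which lies either to the left or to the right of the entering edge. This yields a bi-infinite \emph{cutting sequence} in the letters $L,R$, which is invariant under the translation induced by $A$ and hence periodic; the period word, read appropriately, is exactly a cyclic word of the form $L^{q_k}R^{p_k}\cdots L^{q_1}R^{p_1}$ (the grouping into blocks of $L$'s and $R$'s is forced because $s^+$ is \emph{not} rational, so the sequence genuinely alternates between nonempty $L$-runs and nonempty $R$-runs, giving positive exponents $p_i,q_i$).

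First I would set up the correspondence between a word $w$ in $L,R$ and the finite path of Farey triangles it traces out starting from a fixed base triangle $T_0$ (with vertices $\tfrac{0}{1},\tfrac{1}{0},\tfrac{1}{1}$, say), recording how $w$ moves the base triangle. Next I would show that for $A$ with $\tr(A)>2$, conjugating $A$ amounts to shifting the base triangle along the axis, so that every conjugate of $A$ corresponds to a different starting point on the same periodic cutting sequence; choosing the starting point to be immediately after an $R$-run that is followed by an $L$-run normalizes the cyclic word into the stated block form $L^{q_k}R^{p_k}\cdots L^{q_1}R^{p_1}$. Then I would verify that the matrix realizing one full period of the cutting sequence is conjugate to $A$: the period word $W$ and $A$ have the same axis and the same translation length along it (both equal to $\lambda$, the expanding eigenvalue), hence $W$ and $A$ are conjugate in $\SL(2;\Z)$, using that a hyperbolic element of $\SL(2;\Z)$ is determined up to conjugacy by its (oriented) axis together with its translation length, or more concretely by its trace together with the Farey-combinatorial type of the axis. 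For uniqueness, I would argue that the cutting sequence of the axis is a genuine conjugacy invariant: two conjugate matrices have axes that differ by an element of $\SL(2;\Z)$, which carries the Farey tessellation to itself and hence carries one cutting sequence to a shift of the other; therefore the cyclic word obtained is well-defined, and the only ambiguity in writing it in the block form $L^{q_k}R^{p_k}\cdots L^{q_1}R^{p_1}$ is the choice of which $L$-run to call the "last" one, i.e. a cyclic permutation of the ordered pairs $(p_1,q_1),\dots,(p_k,q_k)$.

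I expect the main obstacle to be the bookkeeping in the uniqueness half: one must carefully rule out that an $\SL(2;\Z)$-conjugacy could produce a cutting sequence that is a reflection, or a shift combined with a swap of the $L$/$R$ roles, of the original; this requires pinning down orientations — the axis is oriented from $s^-$ to $s^+$, and left/right turns are measured with a fixed orientation of $\mathbb{H}^2$ — and checking that conjugation by an \emph{orientation-preserving} map (every element of $\SL(2;\Z)$ acts orientation-preservingly on $\mathbb{H}^2$) preserves all of this data, so that only cyclic permutation survives. A secondary technical point is to confirm that both $L$-runs and $R$-runs are nonempty and finite in each period, which follows from $|\tr A|>2$ (so the fixed slopes are irrational, preventing the axis from running into a Farey vertex or being eventually constant-turn) together with cocompactness of the $\langle A\rangle$-action on the axis (so each run is finite); granting these, the existence statement and the normal form drop out, and the expanding eigenvalue identity $A\vs=\lambda\vs$ with $s=[\,\overline{q_k:p_k,\dots,q_1,p_1}\,]$ is then a routine unwinding of the continued-fraction interpretation of the cutting sequence, which I would record but not belabor.
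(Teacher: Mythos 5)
The paper does not actually prove this proposition: it is stated as ``well-known'' with a pointer to Gu\'eritaud's Proposition~2.1, so there is no in-paper argument to compare against. Your outline is precisely the standard proof from that circle of ideas --- the axis of the hyperbolic isometry $A$ cuts a bi-infinite, $\langle A\rangle$-periodic sequence of Farey triangles, the left/right cutting sequence gives the cyclic word in $L,R$, irrationality of the fixed slopes forces both letters to occur in each period, and orientation-preservation of the $\SL(2;\Z)$-action reduces the ambiguity to cyclic permutation. As a strategy this is correct and is what the citation implicitly invokes.

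One step as you state it is false and needs repair: ``a hyperbolic element of $\SL(2;\Z)$ is determined up to conjugacy by its (oriented) axis together with its translation length.'' The matrices $A$ and $-A$ have identical axes and translation lengths (they induce the same element of $\mathrm{PSL}(2;\Z)$) yet are never conjugate in $\SL(2;\Z)$, since their traces differ; the paper itself exploits exactly this in Proposition~\ref{prop_trace-negative}. So your cutting-sequence argument only identifies $A$ with the period word $W=L^{q_k}R^{p_k}\cdots L^{q_1}R^{p_1}$ up to sign: after conjugating so that the axes and translations agree, $A W^{-1}$ lies in the kernel $\{\pm \I\}$ of $\SL(2;\Z)\to\mathrm{PSL}(2;\Z)$. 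You must then invoke the hypothesis $\tr(A)>2$ together with the fact that $W$, being a nonempty positive word in $L$ and $R$, has all entries nonnegative and hence $\tr(W)>2$ as well, to rule out $A=-W$ and conclude $A=W$ after conjugation. The same sign issue must be tracked in the uniqueness half (two positive words representing conjugate matrices have cutting sequences that agree as cyclic words, not merely up to sign). With that patch, and with the finiteness/nonemptiness of the $L$- and $R$-runs justified as you indicate, the argument is complete.
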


\subsection{Mosher's train track}\label{sec:2-1}
In the rest of the paper, we assume that $a, b, c, d$ are nonnegative integers with $ad-bc=1.$
In other words, $\left(\begin{smallmatrix}a & c \\ b & d \end{smallmatrix}\right) \in \SL^+(2; \Z)$, where $\SL^+(2; \Z)$ is the monoid generated by $L$ and $R$. 
This condition is equivalent to that $\frac{b}{a}$ and $\frac{d}{c}$ are joined by an arc in the {\em Farey diagram}  
(see \cite[Section~1.1]{Hatcher22} for the Farey diagram) and $\frac{0}{1} \leq \frac{b}{a}<\frac{d}{c}\leq \frac{1}{0}$. 
We call the interval $[\frac{b}{a}, \frac{d}{c}]$ a {\em Farey interval}. 
%\marginpar{this sentences have been moved}
We say that $[\frac{b}{a}, \frac{d}{c}]$ is the {\em corresponding Farey interval} to the matrix $\left(\begin{smallmatrix}a & c \\ b & d \end{smallmatrix}\right)$. 
We also say that $\left(\begin{smallmatrix}a & c \\ b & d \end{smallmatrix}\right)$ is the  {\em corresponding matrix} to the Farey interval $[\frac{b}{a}, \frac{d}{c}]$.

In \cite{Mosher03} and Sections~1.3 and 10.1 of \cite{Mosher2003} Mosher defined a train track $\tau_{[\frac{b}{a}, \frac{d}{c}]}$ in the torus $\T$ 
and showed an 
intriguing relation between the train track and a continued fraction expansion. 
In this paper we study Mosher's train track in the subspace $\Sigma_{1, 1} \subset \T$. 
%and describe Agol cycles of pseudo-Anosov maps on $\Sigma_{1,1}$ and $\Sigma_{0,4}$.
%イントロで書いたので comment out

We now  define a train track $\tau_{[\frac{b}{a}, \frac{d}{c}]}$ in $\Sigma_ {1,1}$ inductively. 
Our definition is different from Mosher's original one but in Proposition~\ref{propA1} we will see ours coincides with Mosher's.

%Let $\tau_0:= \tau_{[\frac{0}{1}, \frac{1}{0}]}$ be a train track  
Let $\tau_{[\frac{0}{1}, \frac{1}{0}]}:= \tau_0$ be a train track  
with measure  $\mu= \left(\begin{smallmatrix} x \\ y \end{smallmatrix}\right)$ in $\Sigma_{1,1}$ 
as shown in Figure~\ref{fig_base}-(3).  
The component of $\Sigma_{1,1} \setminus \tau_0$ is a once-punctured bigon.
The train track $\tau_0$ consists of three branches, one large branch and two small branches. 
The small branches from South and West merge to form a large branch then it separates into two small branches going North and East. 
We call $\tau_0$ the {\em base train track}. 
The vector $\mu= \left(\begin{smallmatrix} x \\ y \end{smallmatrix}\right)$ represents the weight $x$ 
of the horizontal small branch and the weight $y$ of the vertical small branch.  
From now on, the notation $\tau_0$ is exclusively used for the base train track.

%%%%%%%%%%
%\begin{figure}[htbp]
%\centering
%\includegraphics[height=3.5cm]{base.eps}
%\caption{(1) Once-punctured torus $\Sigma_{1,1}$. 
%(2) Simple closed curves with slope $0$ and $\infty$. 
%(3) The measured train track $(\tau_0, \vxy)$ in $\Sigma_{1,1}$.}
%\label{fig_base}
%\end{figure}

%We define a train track $\tau_{[\frac{b}{a}, \frac{d}{c}]}$ in $\Sigma_{1,1}$ as follows. 
Assume that we have defined a train track $\tau_{[\frac{b}{a}, \frac{d}{c}]}$ consisting of one large and two small branches. 
Note that $\left(\begin{smallmatrix}a & c \\ b & d \end{smallmatrix}\right) L=
 \left(\begin{smallmatrix}a & c \\ b & d \end{smallmatrix}\right)  \left(\begin{smallmatrix}1 & 0 \\ 1 & 1 \end{smallmatrix}\right) =
\left(\begin{smallmatrix}a+c & c \\ b+d & d \end{smallmatrix}\right) \in \SL^+(2; \Z)$.
In the Farey diagram, the corresponding Farey interval $[\frac{b+d}{a+c}, \frac{d}{c}]$ is the {\em right} half of the original Farey interval $[\frac{b}{a}, \frac{d}{c}]$.
We write 
%$$
%\left[\tfrac{b}{a}, \tfrac{d}{c}\right] \stackrel{\mbox{\tiny r-half}}{\supset} \left[\tfrac{b+d}{a+c}, \tfrac{d}{c}\right].
%$$
%\marginal{please see my comment}
%%
%% \frac にすると fraction が他に比べて目立って大きく見えてしまいます. もちろん数学的には問題ないのですが. \frac を \tfrac に変更するとこのように小さくなります
%%どちらがよいですか? 
%%
%%
$$
\left[\frac{b}{a}, \frac{d}{c}\right] \stackrel{\mbox{\tiny r-half}}{\supset} \left[\frac{b+d}{a+c}, \frac{d}{c}\right].
$$
We define the train track $\tau_{[\frac{b+d}{a+c}, \frac{d}{c}]}$ 
as a result of the left splitting of $\tau_{[\frac{b}{a}, \frac{d}{c}]}$ at the unique large branch. 
Although this is a topological operation (the measure is forgotten) 
abusing the left maximal splitting symbol, $\ls$, on measured train tracks, we may write
\begin{equation}\label{eq:left-split}
\tau_{[\frac{b}{a}, \frac{d}{c}]} \ls \tau_{[\frac{b+d}{a+c}, \frac{d}{c}]}.
\end{equation}

Similarly, the corresponding Farey interval of the matrix 
$\left(\begin{smallmatrix}a & c \\ b & d \end{smallmatrix}\right) R = \left(\begin{smallmatrix}a & a+c \\ b & b+d \end{smallmatrix}\right) \in \SL^+(2; \Z)$ 
is $[\frac{b}{a}, \frac{b+d}{a+c}]$ 
which is the {\em left} half of the original Farey interval $[\frac{b}{a}, \frac{d}{c}]$.
We  write 
$$\left[\frac{b}{a}, \frac{d}{c} \right] \stackrel{\mbox{\tiny l-half}}{\supset} \left[\frac{b}{a}, \frac{b+d}{a+c} \right].$$
We define the train track $\tau_{[\frac{b}{a}, \frac{b+d}{a+c}]}$ 
as a result of the right splitting of $\tau_{[\frac{b}{a}, \frac{d}{c}]}$.
Again, abusing the right maximal splitting symbol, $\rs$, we may write 
\begin{equation}\label{eq:right-split}
\tau_{[\frac{b}{a}, \frac{d}{c}]} \rs \tau_{[\frac{b}{a}, \frac{b+d}{a+c}]}.
\end{equation}

The both new train tracks $\tau_{[\frac{b+d}{a+c}, \frac{d}{c}]}$ and $\tau_{[\frac{b}{a}, \frac{b+d}{a+c}]}$ consist of three branches, 
one large and two small branches. 

For every Farey interval $[\frac{b}{a}, \frac{d}{c}]$, one can find a unique finite nested sequence of Farey intervals 
 starting from  $[\frac{0}{1}, \frac{1}{0}]$ and choosing the left/right half of it. 
Therefore, the train track $\tau_{[\frac{b}{a}, \frac{d}{c}]}$ is well-defined.

\begin{example}
\label{example:splitting-sequence}
The Farey interval $[\frac{10}{7}, \frac{3}{2}]$ is uniquely obtained as follows:
$$
\left[\frac{0}{1}, \frac{1}{0}\right] \stackrel{\mbox{\tiny r-half}}{\supset} 
\left[\frac{1}{1}, \frac{1}{0}\right] \stackrel{\mbox{\tiny l-half}}{\supset} 
\left[\frac{1}{1}, \frac{2}{1}\right] \stackrel{\mbox{\tiny l-half}}{\supset} 
\left[\frac{1}{1}, \frac{3}{2}\right] \stackrel{\mbox{\tiny r-half}}{\supset}  
\left[\frac{4}{3}, \frac{3}{2}\right] \stackrel{\mbox{\tiny r-half}}{\supset}  
\left[\frac{7}{5}, \frac{3}{2}\right] \stackrel{\mbox{\tiny r-half}}{\supset}  
\left[\frac{10}{7}, \frac{3}{2}\right].
$$
In terms of the corresponding matrices, we get 
$$
\left(\begin{smallmatrix} 7 & 2 \\ 10 & 3 \end{smallmatrix}\right)
= \left(\begin{smallmatrix} 1 & 0 \\ 0 & 1 \end{smallmatrix}\right) LRRLLL, 
$$
where $ \left(\begin{smallmatrix} 1 & 0 \\ 0 & 1 \end{smallmatrix}\right)$ 
is the corresponding matrix of the 
%Farey interval $\left[\frac{0}{1}, \frac{1}{0}\right]$. 
Farey interval $[\frac{0}{1}, \frac{1}{0}]$. 
The rule is to convert $\stackrel{\mbox{\tiny l-half}}{\supset}  / \stackrel{\mbox{\tiny r-half}}{\supset}$ into the matrix $R/L$ and multiply it from the right. 
Here we remark that l-half becomes $R$, and r-half becomes $L$. 
Thus, the train track $\tau_{[\frac{10}{7}, \frac{3}{2}]}$ is defined as a result of the following consecutive splittings:
$$\tz  \ls  \tau_{[\frac{1}{1}, \frac{1}{0}]}  \rs   \tau_{[\frac{1}{1}, \frac{2}{1}]}   \rs  \tau_{[\frac{1}{1}, \frac{3}{2}]}   \ls  \tau_{[\frac{4}{3}, \frac{3}{2}]}  \ls 
\tau_{[\frac{7}{5}, \frac{3}{2}]} \ls\ \tau_{[\frac{10}{7}, \frac{3}{2}]},$$
which is also written as $ \tz\  \ls \  \rs^{2} \  \ls^{3}   \tau_{[\frac{10}{7}, \frac{3}{2}]}$.
\end{example}

Recall that 
$f_ L, f_R: \Sigma_{1,1} \rightarrow \Sigma_{1,1}$ are induced by  $L, R \in  \SL(2; \Z)$ respectively. 
%$f_L$ and $f_R$ are elements of $\MCG(\Sigma_{1,1})$ induced by $L$ and $R \in  \SL(2; \Z)$. 

\begin{lemma}\label{lem:LandR}
We have 
$\tau_0  \ls \tau_{[\frac{1}{1}, \frac{1}{0}]}=f_L(\tau_0)$ 
and 
$\tau_0  \rs \tau_{[\frac{0}{1}, \frac{1}{1}]}=f_R(\tau_0)$. 
In other words 
$f_L(\tau_0) =\  \ls (\tau_0)$ and 
$f_R(\tau_0) = \ \rs (\tau_0)$. 
\end{lemma}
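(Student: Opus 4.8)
The plan is to verify both claimed equalities by a direct picture computation, using the only hard input we need: that the base train track $\tau_0$ and its maximal splittings live in a neighborhood of the single large branch, so the splitting operation is completely local, while $f_L$ and $f_R$ are Dehn twists along the slope-$\infty$ and slope-$0$ curves. First I would fix the standard model of $\Sigma_{1,1}$ as $[0,1]^2$ with opposite edges identified and the puncture at the corner, and recall from the text that $\tau_0$ has one large branch formed by the merging of the horizontal (slope $0$) and vertical (slope $\infty$) small branches, which then separate again. The two small branches of $\tau_0$ represent, after carrying, the simple closed curves $\gamma_0$ and $\gamma_\infty$; in particular $\tau_0$ carries both slope-$0$ and slope-$\infty$ curves.

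The core of the argument is the left case; the right case is symmetric under the involution swapping the two coordinates (equivalently, conjugation exchanging $L$ and $R$, and $\ls$ and $\rs$). For the left case I would (i) compute $\ls(\tau_0)=\tau_{[\frac11,\frac10]}$ directly from Definition~\ref{definition:3operations}(1) and the inductive definition in \eqref{eq:left-split}, drawing the resulting branch near the old large branch; and (ii) compute $f_L(\tau_0)$, i.e. apply the left-handed Dehn twist about $\gamma_\infty$ to the three branches of $\tau_0$. The horizontal and vertical small branches and the large branch all get dragged once around the slope-$\infty$ curve; after an isotopy supported away from the twisting annulus one sees that $f_L(\tau_0)$ again has one large and two small branches and that its combinatorial/smooth structure near the old large branch agrees exactly with the picture of $\ls(\tau_0)$. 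Concretely, one checks that the effect of the twist is to replace the slope-$0$ small branch by a branch of slope $\frac{b}{a}$ with $\abcd L=\left(\begin{smallmatrix}1&0\\1&1\end{smallmatrix}\right)$, i.e. slope $1$, matching the Farey interval $[\frac11,\frac10]$ attached to $\tau_{[\frac11,\frac10]}$; since both train tracks carry the same curves with the same tangencies and differ by an isotopy of $\Sigma_{1,1}$, they are equal in the sense of the text (equality up to ambient isotopy).

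The verification that these two train tracks coincide \emph{on the nose} (not merely that both are obtained from $\tau_0$ by one move) is where I would be most careful: one must match the cyclic order of the three branch-ends at the unique switch and the position of the puncture in the complementary once-punctured bigon, so that no extra shifting is needed. I expect this bookkeeping — keeping track of which side of the switch each branch leaves on, and that the puncture stays in the correct complementary region after the twist — to be the main obstacle, though it is routine once the figures are drawn. Finally, the second formulation $f_L(\tau_0)=\,\ls(\tau_0)$ and $f_R(\tau_0)=\,\rs(\tau_0)$ is just a restatement in the operator notation introduced in the conventions, so it follows immediately once the equalities of train tracks are established.
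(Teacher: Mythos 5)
Your proposal is correct and matches the paper's own argument: the paper likewise proves the lemma by a direct picture computation, identifying $\ls(\tau_0)$ with $\tau_{[\frac{1}{1},\frac{1}{0}]}$ via the inductive definition \eqref{eq:left-split} and then checking against the figure that this coincides, up to isotopy, with the image of $\tau_0$ under the Dehn twist $f_L$ (and symmetrically for $f_R$). The bookkeeping you flag (switch structure and position of the puncture) is exactly what the paper's Figure~\ref{fig_lrmap} records.
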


\begin{proof}
The left (resp. right) splitting of $\tau_0=\tz$ yields 
$\tau_{[\frac{1}{1}, \frac{1}{0}]}$ (resp.  $\tau_{[\frac{0}{1}, \frac{1}{1}]}$) by (\ref{eq:left-split}) (resp. (\ref{eq:right-split})) 
and 
it is equal to 
$f_L(\tau_0)$ (resp. $f_R(\tau_0)$) by  Figure~\ref{fig_lrmap}. 
%\marginal{See also Convention~\ref{convention:max split} for the operations $\ls$ and $\rs$. is removed}
\begin{figure}[htbp]
\centering
\includegraphics[height=7.5cm]{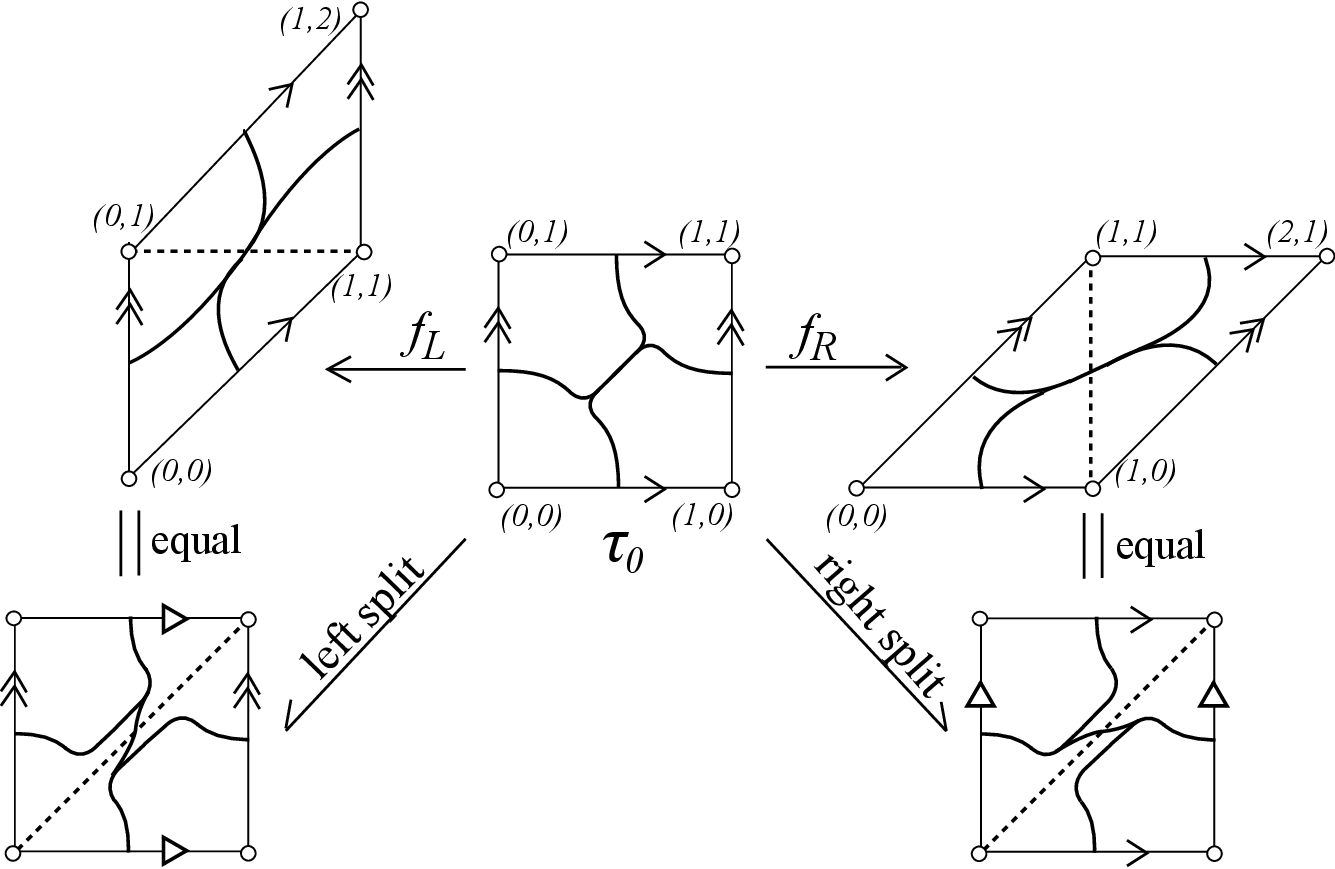}
\caption{Proof of Lemma~\ref{lem:LandR}: $\tau_0  \ls f_L(\tau_0)$ and $\tau_0  \rs f_R(\tau_0)$. }
\label{fig_lrmap}
\end{figure}
\end{proof}

%\marginal{In Fig 6, we keep  `isotopic' $\leftrightarrow$.
%\\
%In Fig 6, it should be the row vectors. 
%\\
%We need to add arrows for identification of edges}

%\marginpar{Lemma~\ref{lemA2} is used in Prop \ref{prop:flatten}}
%\begin{lemma}\label{lemA2}
%For a homeomorphism $f: \Sigma_{1,1} \rightarrow \Sigma_{1,1}$ we have the following. 
%%%
%%% Lemma 2.6 と Lemma 3.5 を単純に書き直したものなので 必要ないと思います. 後で comment out するか検討する. 
%%%
%\begin{enumerate}
%\item 
%$(\ls \circ f) (\tau_0)  =  (f \circ \ls) (\tau_0) =  (f \circ f_L)(\tau_0)$. 
%\item 
%$(\rs \circ f) (\tau_0)  =  (f \circ \rs) (\tau_0)=  (f \circ f_R)(\tau_0)$. 
%\end{enumerate}
%\end{lemma}
%
%
%\begin{proof}
%For both (1) and (2), 
%the first equality comes from Lemma~\ref{lemma:0A} and 
%the second equality comes from Lemma~\ref{lem:LandR}. 
%\end{proof}

\begin{proposition}\label{propA} 
For the base train track $\tau_0$ we have the following. 
\begin{enumerate}
\item 
For $p, q \ge 1$ we have 
$\tau_0 \ls^q \tau_{[\frac{q}{1}, \frac{1}{0}]}= f_L^q (\tau_0)$ and 
$\tau_0 \rs^p \tau_{[\frac{0}{1}, \frac{1}{p}]}= f_R^p (\tau_0)$. 
In other words 
$\ls^q (\tau_0) = f_L^q (\tau_0)$ and 
$\rs^p (\tau_0)=f_R^p (\tau_0)$.

\item 
For positive integers $p_1, q_1, \cdots, p_k, q_k$ and $k$ we have 
$$\tau_0
\ls^{q_k} \ 
\rs^{p_k} \ 
\cdots \ 
\ls^{q_1} \ 
\rs^{p_1}
\tau_{\ell},
$$
where 
$\ell= p_1+ q_1+ \cdots+ p_k+ q_k$ and 
$\tau_{\ell}= (f_L^{q_k} \circ f_R^{p_k} \circ \cdots \circ f_L^{q_1}\circ f_R^{p_1})(\tau_0)$. 
In other words 
%Moreover, for $p_1, q_1, \cdots, p_k, q_k \in \N$  we have
$$
(f_L^{q_k} \circ f_R^{p_k} \circ \cdots \circ f_L^{q_1}\circ f_R^{p_1})(\tau_0) =(\rs^{p_1} \circ \ls^{q_1} \circ \cdots \circ \rs^{p_k} \circ \ls^{q_k})(\tau_0) .
$$
\end{enumerate}
\end{proposition}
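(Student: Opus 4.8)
The plan is to prove part (1) first, and then deduce part (2) by a straightforward iteration using Lemma~\ref{lemma:0A}. For part (1), I would argue by induction on $q$ (the statement for $R$ and $p$ being entirely parallel). The base case $q=1$ is exactly Lemma~\ref{lem:LandR}. For the inductive step, suppose $\tau_0 \ls^q \tau_{[\frac{q}{1},\frac{1}{0}]} = f_L^q(\tau_0)$. The first point to check is that $\tau_{[\frac{q}{1},\frac{1}{0}]}$ indeed admits one more left splitting and that this splitting is the one recorded in the nested Farey-interval sequence, i.e.\ that $[\frac{q}{1},\frac{1}{0}] \stackrel{\text{\tiny r-half}}{\supset} [\frac{q+1}{1},\frac{1}{0}]$; this is immediate from the definition since the corresponding matrix of $[\frac{q}{1},\frac{1}{0}]$ is $L^q = \left(\begin{smallmatrix} 1 & 0 \\ q & 1\end{smallmatrix}\right)$ and $L^q \cdot L = L^{q+1}$ corresponds to the right half. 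Hence by \eqref{eq:left-split} we get $\tau_{[\frac{q}{1},\frac{1}{0}]} \ls \tau_{[\frac{q+1}{1},\frac{1}{0}]}$, so $\tau_0 \ls^{q+1}\tau_{[\frac{q+1}{1},\frac{1}{0}]}$. It remains to identify this with $f_L^{q+1}(\tau_0)$.

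The key mechanism here is that the splitting operation commutes with the homeomorphism $f_L$ (this is the content of Lemma~\ref{lemma:0A}, valid here since a left splitting is supported in a small disk). Applying $f_L$ to the base case relation $\tau_0 \ls f_L(\tau_0)$ of Lemma~\ref{lem:LandR} gives $f_L^q(\tau_0) \ls f_L^{q+1}(\tau_0)$. Combining with the inductive hypothesis $\tau_0 \ls^q f_L^q(\tau_0)$ yields $\tau_0 \ls^{q+1} f_L^{q+1}(\tau_0)$. Since the left splitting of a train track consisting of one large and two small branches is uniquely determined, the two chains $\tau_0 \ls^{q+1} \tau_{[\frac{q+1}{1},\frac{1}{0}]}$ and $\tau_0 \ls^{q+1} f_L^{q+1}(\tau_0)$ have the same endpoint, so $\tau_{[\frac{q+1}{1},\frac{1}{0}]} = f_L^{q+1}(\tau_0)$, completing the induction. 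The $R$-version is identical with $\ls \leftrightarrow \rs$, $L \leftrightarrow R$, and the r-half replaced by the l-half.

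For part (2), I would proceed by induction on $k$. The case $k=1$ (with $q_1$ then $p_1$) follows from part (1): apply part (1) to get $\tau_0 \rs^{p_1} f_R^{p_1}(\tau_0)$, then apply $f_R^{p_1}$ (again using Lemma~\ref{lemma:0A} to move the splitting past the homeomorphism) to the relation $\tau_0 \ls^{q_1} f_L^{q_1}(\tau_0)$, obtaining $f_R^{p_1}(\tau_0) \ls^{q_1} (f_R^{p_1}\circ f_L^{q_1})(\tau_0)$; concatenating gives $\tau_0 \rs^{p_1}\ \ls^{q_1}\ (f_R^{p_1}\circ f_L^{q_1})(\tau_0)$, which in the operator notation is $(f_L^{q_1}\circ f_R^{p_1})(\tau_0) = (\ls^{q_1}\circ \rs^{p_1})(\tau_0)$. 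For the inductive step, having established the chain ending at $\psi(\tau_0)$ with $\psi = f_L^{q_{k-1}}\circ f_R^{p_{k-1}}\circ\cdots\circ f_L^{q_1}\circ f_R^{p_1}$, I prepend $\tau_0 \ls^{q_k}\ \rs^{p_k} (f_L^{q_k}\circ f_R^{p_k})(\tau_0)$ from the $k=1$ case and then apply $\psi$ to the whole chain, invoking Lemma~\ref{lemma:0A} once more to commute $\psi$ past all the splittings; this produces $\tau_0 \ls^{q_k}\ \rs^{p_k}\ \cdots\ \ls^{q_1}\ \rs^{p_1}\ (\psi\circ f_L^{q_k}\circ f_R^{p_k})(\tau_0)$, and $\psi\circ f_L^{q_k}\circ f_R^{p_k}$ is exactly the stated $\tau_\ell$. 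Bookkeeping that the intermediate splittings remain the maximal (and uniformly left/right) ones is handled automatically because each is obtained by applying the homeomorphism $\psi$ to an already-established splitting chain, and $\psi$ preserves the structure of maximal splittings.

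I expect the main obstacle to be purely notational rather than conceptual: carefully matching the left/right splitting symbols against the l-half/r-half conventions (recall the paper's remark that l-half corresponds to $R$ and r-half to $L$), and keeping the operator-notation composition order straight when reversing the sequence. The genuine geometric content — that splitting commutes with homeomorphisms and that $f_L, f_R$ realize a single splitting of $\tau_0$ — is already packaged in Lemma~\ref{lemma:0A} and Lemma~\ref{lem:LandR}, so the argument is essentially a clean double induction on top of those two inputs.
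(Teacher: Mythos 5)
Your part (1) is correct and is essentially the paper's own argument: the base case is Lemma~\ref{lem:LandR}, and the inductive step commutes a single splitting past $f_L$ via Lemma~\ref{lemma:0A}. (One cosmetic slip: applying $f_L$ once to $\tau_0 \ls f_L(\tau_0)$ gives $f_L(\tau_0)\ls f_L^2(\tau_0)$; to get the relation $f_L^q(\tau_0)\ls f_L^{q+1}(\tau_0)$ you must apply $f_L^{q}$.)

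Part (2), however, contains a genuine composition-order error — precisely the pitfall you name at the end and then fall into. In the case $k=1$ you build the chain $\tau_0 \rs^{p_1} f_R^{p_1}(\tau_0) \ls^{q_1} (f_R^{p_1}\circ f_L^{q_1})(\tau_0)$, i.e.\ with the right splittings first; but the proposition's chain for $k=1$ is $\tau_0 \ls^{q_1}\ \rs^{p_1}\ \tau_\ell$ with the left splittings first and $\tau_\ell=(f_L^{q_1}\circ f_R^{p_1})(\tau_0)$. You then identify your endpoint $(f_R^{p_1}\circ f_L^{q_1})(\tau_0)$ with $(f_L^{q_1}\circ f_R^{p_1})(\tau_0)$, and this is false: by Proposition~\ref{propA1} these are the train tracks of the Farey intervals corresponding to $R^{p_1}L^{q_1}$ and $L^{q_1}R^{p_1}$, namely $[\tfrac{q_1}{1+p_1q_1},\tfrac{1}{p_1}]$ and $[\tfrac{q_1}{1},\tfrac{p_1q_1+1}{p_1}]$, which are distinct. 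The same inversion recurs in your inductive step: to prepend the block $\ls^{q_k}\rs^{p_k}$ at the \emph{front} of the splitting chain you must apply $\chi:=f_L^{q_k}\circ f_R^{p_k}$ to the $(k-1)$-chain (so the new chain starts at $\tau_0$, passes through $\chi(\tau_0)$, and ends at $\chi\circ\psi(\tau_0)$, which is the stated $\tau_\ell$); applying $\psi$ to the new block instead, as you propose, produces $\psi\circ\chi(\tau_0)$ and attaches the block at the \emph{end} of the chain, so the claimed conclusion "$\psi\circ f_L^{q_k}\circ f_R^{p_k}$ is exactly the stated $\tau_\ell$" is not correct. The right bookkeeping — which is the paper's proof — is to use part (1) to replace the initial $\ls^{q}$ by $f_L^{q}$ and then push each subsequent splitting operator past the accumulated homeomorphism with Lemma~\ref{lemma:0A}, so homeomorphisms accumulate on the left of the composition as splittings accumulate on the right of the chain: $(\rs^{p}\circ\ls^{q})(\tau_0)=(\rs^{p}\circ f_L^{q})(\tau_0)=(f_L^{q}\circ\rs^{p})(\tau_0)=(f_L^{q}\circ f_R^{p})(\tau_0)$, and so on. Your mechanism (Lemmas~\ref{lemma:0A} and~\ref{lem:LandR} plus induction) is the right one; the order of composition needs to be reversed consistently throughout part (2).
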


%\begin{lemma}\label{lemA}
%%Let $\tau_0=\tz$ and 
%Let $p, q \in \N$. 
%We have $\tau_0 \ls^q f_L^q (\tau_0)$ and 
%$\tau_0 \rs^p f_R^p (\tau_0)$. 
%Moreover, for $p_1, q_1, \cdots, p_k, q_k \in \N$ and $k \ge 1$  we have 
%$$\tau_0
%\ls^{q_k} \ 
%\rs^{p_k} \ 
%\cdots \ 
%\ls^{q_1} \ 
%\rs^{p_1}
%\tau_{\ell},
%$$
%where 
%$\ell= p_1+ q_1+ \cdots+ p_k+ q_k$ and 
%$\tau_{\ell}= (f_L^{q_k} \circ f_R^{p_k} \circ \cdots \circ f_L^{q_1}\circ f_R^{p_1})(\tau_0)$. 
%In other words 
%%Moreover, for $p_1, q_1, \cdots, p_k, q_k \in \N$  we have
%$$
%(\rs^{p_1} \circ \ls^{q_1} \circ \cdots \circ \rs^{p_k} \circ \ls^{q_k})(\tau_0)=
%(f_L^{q_k} \circ f_R^{p_k} \circ \cdots \circ f_L^{q_1}\circ f_R^{p_1})(\tau_0).
%$$
%\end{lemma} 

\begin{proof}
We first prove Statement (1). 
When $q=1$ we have $\tau_0 \ls f_L(\tau_0)$ by Lemma~\ref{lem:LandR}. 
Suppose that $q=2$. 
By Lemmas~\ref{lemma:0A} and \ref{lem:LandR} it follows that 
$$
 (\ls \circ \ls) (\tau_0) 
 =  (\ls \circ f_L)(\tau_0)
 = (f_L \circ \ls) (\tau_0)
 = (f_L \circ f_L)(\tau_0).
 $$
Thus $\tau_0 \ls^2 f_L^2 (\tau_0)$. 
Repeating this argument we have 
$\tau_0 \ls^q f_L^q (\tau_0)$. 
One can similarly prove $\tau_0 \rs^p f_R^p (\tau_0)$. 

For Statement (2), we first consider the case $k=1$. 
Statement (1) together  with Lemma~\ref{lemma:0A} implies that 
$$
(\rs^p \circ \ls^q) (\tau_0) 
=
(\rs^p \circ f_L^q) (\tau_0)
=
(f_L^q \circ \rs^p) (\tau_0) 
=(f_L^q \circ f_R^p) (\tau_0). 
$$
The case $k=1$ is done. 
We turn to the case $k =2$. 
By the above argument it follows that 
$$
(\ls^{q_1} \circ \rs^{p_2} \circ \ls^{q_2}) (\tau_0) 
= (\ls^{q_1} \circ ( f_L^{q_2} \circ f_R^{p_2})) (\tau_0)
= ((f_L^{q_2} \circ f_R^{p_2}) \circ \ls^{q_1}) (\tau_0)
= (f_L^{q_2} \circ f_R^{p_2} \circ f_L^{q_1}) (\tau_0). 
$$
Similarly we have 
$$
(\rs^{p_1} \circ \ls^{q_1} \circ \rs^{p_2} \circ \ls^{q_2}) (\tau_0) 
=
(f_L^{q_2} \circ f_R^{p_2} \circ f_L^{q_1} \circ f_R^{p_1}) (\tau_0).
$$
Case $k = 2$ is done. 
The proof for the case $k \ge 3$ is similar. 
\end{proof}

Let $\gamma_{\frac{b}{a}}$ be a simple closed curve in $\Sigma_{1,1}$ whose slope $\frac{b}{a}$. 
We require that $\gamma_{\frac{b}{a}}$ is setwise preserved by the hyperelliptic involution $f_{-\I}$ where $-\I= \left(\begin{smallmatrix} -1 & 0 \\ 0 & -1 \end{smallmatrix}\right)$. 
See Figure~\ref{fig_flatten}-(1),(2). 

\begin{proposition}
\label{propA1}
\begin{enumerate}
\item 
For $A= \abcd \in \SL^+(2; \Z)$ we have $\tau_{[\frac{b}{a}, \frac{d}{c}]} = f_A(\tau_0)$. 

\item 
Moreover, 
if $A=  \abcd \ne \left(\begin{smallmatrix}1 & 0 \\ 0 & 1 \end{smallmatrix}\right)$ 
the train track $\tau_{[\frac{b}{a}, \frac{d}{c}]}$ 
is formed by flattening the obtuse angles at the intersection of  $\gamma_{\frac{b}{a}}$ and $\gamma_{\frac{d}{c}}$ 
as in Figure~\ref{fig_flatten}-(2),(3). 
In particular, $\tau_{[\frac{b}{a}, \frac{d}{c}]}$ is preserved by the hyperelliptic involution $f_{-\I}$. 
\end{enumerate}
\end{proposition}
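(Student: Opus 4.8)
The plan is to deduce statement (2) from statement (1), and to prove statement (1) by the same commutation bookkeeping that appears in the proof of Proposition~\ref{propA}.

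\emph{Statement (1).} Write $A$ as a word $A = X_1 X_2\cdots X_m$ in the monoid generators $L,R$ of $\SL^+(2;\Z)$; by uniqueness of the nested sequence of Farey halves this word is unique, and $m=0$ precisely when $A$ is the identity, in which case the claim is trivial. Unwinding the inductive definition of $\tau_{[\frac{b}{a},\frac{d}{c}]}$ — recalling from (\ref{eq:left-split}) and (\ref{eq:right-split}) that right multiplication by $L$ realizes the r-half and a \emph{left} splitting, while right multiplication by $R$ realizes the l-half and a \emph{right} splitting — the train track $\tau_{[\frac{b}{a},\frac{d}{c}]}$ is obtained from $\tau_0$ by performing, for $i=1,\dots,m$ in order, a left splitting if $X_i=L$ and a right splitting if $X_i=R$. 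One then proves $\tau_{[\frac{b}{a},\frac{d}{c}]}=(f_{X_1}\circ\cdots\circ f_{X_m})(\tau_0)$ by induction on $m$, using Lemma~\ref{lem:LandR} ($\ls(\tau_0)=f_L(\tau_0)$, $\rs(\tau_0)=f_R(\tau_0)$) together with Lemma~\ref{lemma:0A} to commute the maximal splittings past $f_L$ and $f_R$; this is verbatim the argument in the proof of Proposition~\ref{propA}, only with an arbitrary word in place of the alternating word $L^{q_k}R^{p_k}\cdots L^{q_1}R^{p_1}$. Since $A\mapsto f_A$ is a homomorphism, $f_{X_1}\circ\cdots\circ f_{X_m}=f_A$, which is (1).

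\emph{Statement (2).} I would induct on $m\ge 1$, the length of the word for $A$ — equivalently on the depth of $[\frac{b}{a},\frac{d}{c}]$ below $[\frac{0}{1},\frac{1}{0}]$. For the base case $m=1$ we have $\tau_{[\frac{1}{1},\frac{1}{0}]}=f_L(\tau_0)$ and $\tau_{[\frac{0}{1},\frac{1}{1}]}=f_R(\tau_0)$ by Lemma~\ref{lem:LandR}, and a direct inspection of Figures~\ref{fig_lrmap} and \ref{fig_flatten} shows these are the flattenings of the obtuse angles at the single intersection point $\gamma_{\frac{1}{1}}\cap\gamma_{\frac{1}{0}}$, resp. $\gamma_{\frac{0}{1}}\cap\gamma_{\frac{1}{1}}$. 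For the inductive step, assume $\tau_{[\frac{b}{a},\frac{d}{c}]}$ is the obtuse flattening of $\gamma_{\frac{b}{a}}\cup\gamma_{\frac{d}{c}}$. By (1), $\tau_{[\frac{b+d}{a+c},\frac{d}{c}]}=f_{AL}(\tau_0)=f_A(f_L(\tau_0))=f_A(\tau_{[\frac{1}{1},\frac{1}{0}]})$, and since $A\binom{1}{0}=\binom{a}{b}$, $A\binom{1}{1}=\binom{a+c}{b+d}$ and $A\binom{0}{1}=\binom{c}{d}$, the diffeomorphism $f_A$ carries the obtuse flattening of $\gamma_{\frac{1}{1}}\cup\gamma_{\frac{1}{0}}$ to \emph{a} flattening of $\gamma_{\frac{b+d}{a+c}}\cup\gamma_{\frac{d}{c}}$. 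The point that this is the \emph{obtuse} flattening is precisely the local statement visible in Figure~\ref{fig_flatten}: a left splitting carries the obtuse flattening of $\gamma_u\cup\gamma_v$ (with $u<v$ joined by a Farey edge) to the obtuse flattening of $\gamma_{u\oplus v}\cup\gamma_v$, where $u\oplus v$ is the Farey mediant, and a right splitting gives the obtuse flattening of $\gamma_u\cup\gamma_{u\oplus v}$ — the latter handling $\tau_{[\frac{b}{a},\frac{b+d}{a+c}]}$. This closes the induction. (Equivalently, the obtuse flattening of $\gamma_u\cup\gamma_v$ is the one carrying precisely the laminations of slope in $[u,v]$; this characterization is equivariant under all $f_A$ with $A\in\SL^+(2;\Z)$, so it suffices to check it in one base case.)

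Finally, $f_{-\I}$-invariance is essentially free given (1): $-\I$ is central in $\SL(2;\Z)$, so $f_{-\I}$ commutes with every $f_A$, and $f_{-\I}(\tau_0)=\tau_0$ by inspection of Figure~\ref{fig_base}-(3); hence $f_{-\I}(\tau_{[\frac{b}{a},\frac{d}{c}]})=f_{-\I}f_A(\tau_0)=f_Af_{-\I}(\tau_0)=f_A(\tau_0)=\tau_{[\frac{b}{a},\frac{d}{c}]}$. (Alternatively, the Euclidean metric on $\T$, the puncture, and the straight geodesic representatives of $\gamma_{\frac{b}{a}}$ and $\gamma_{\frac{d}{c}}$ are all invariant under the linear involution $-\I$, so $f_{-\I}$ preserves their obtuse flattening.) I expect the single non-formal ingredient to be the local verification in the inductive step of (2) — that a splitting of the obtuse flattening of $\gamma_u\cup\gamma_v$ implements the passage to the correct Farey half while respecting the obtuse-angle convention (equivalently, the carrying characterization above). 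Everything else reduces to Lemmas~\ref{lemma:0A} and \ref{lem:LandR}, Proposition~\ref{propA}, and the isomorphism $\SL(2;\Z)\to\MCG(\Sigma_{1,1})$.
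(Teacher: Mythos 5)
Your treatment of statement (1) is essentially the paper's: the proof there simply unwinds the inductive definition via (\ref{eq:left-split})--(\ref{eq:right-split}) and reruns the argument of Proposition~\ref{propA} (Lemmas~\ref{lem:LandR} and \ref{lemma:0A}) on an arbitrary word in $L$ and $R$. For statement (2), however, you take a genuinely different route. The paper does not induct on word length at all: it observes that any $A\in\SL^+(2;\Z)$ other than the identity sends the square $[0,1]\times[0,1]$ to a parallelogram in the first quadrant, so the NW and SE \emph{right}-angle corners at $\gamma_{\frac{0}{1}}\cap\gamma_{\frac{1}{0}}$ --- exactly the corners flattened to form $\tau_0$ --- are carried by $f_A$ to the NW and SE \emph{obtuse} corners at $\gamma_{\frac{b}{a}}\cap\gamma_{\frac{d}{c}}$, and (2) then follows from (1) in a single step. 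Your induction instead hinges on the local claim that a left (resp.\ right) splitting of the obtuse flattening of $\gamma_u\cup\gamma_v$ yields the obtuse flattening of $\gamma_{u\oplus v}\cup\gamma_v$ (resp.\ $\gamma_u\cup\gamma_{u\oplus v}$); this is true, but it is not what Figure~\ref{fig_flatten} actually depicts, and verifying it is no easier than the paper's one-line linear-algebra observation --- indeed the cleanest justification of your step ``$f_A$ carries the obtuse flattening to the obtuse flattening'' is precisely that parallelogram argument, which makes the induction unnecessary. Your alternative characterization via carried slopes in $[u,v]$ is legitimate and has the virtue of manifest $\SL^+(2;\Z)$-equivariance, at the cost of importing a standard carrying fact. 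Your derivation of $f_{-\I}$-invariance from centrality of $-\I$ together with $f_{-\I}(\tau_0)=\tau_0$ is correct and a bit more self-contained than the paper's, which reads the invariance directly off the symmetric description in (2).
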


Statement (2) is how Mosher originally defined $\tau_{[\frac{b}{a}, \frac{d}{c}]}$ in \cite{Mosher03,Mosher2003}.

\begin{proof}
By definition of the train track $\tau_{[\frac{b}{a}, \frac{d}{c}]}$ as in (\ref{eq:left-split}) and (\ref{eq:right-split}), 
the argument in the proof of Proposition~\ref{propA} yields Statement (1).  
See also Example~\ref{example:splitting-sequence}.

Recall that the base train track $\tau_0$ is 
obtained by flattening North West (NW) and South East (SE) right angles at the intersection of 
the simple closed curves $\gamma_{\frac{0}{1}}$ and $\gamma_{\frac{1}{0}}$ of  slope $0$ and $\infty$ as in Figure~\ref{fig_base}. 
If $A=  \abcd \ne \left(\begin{smallmatrix}1 & 0 \\ 0 & 1 \end{smallmatrix}\right)$ we have
\begin{equation}
\label{equation_slope}
\tfrac{0}{1} \le  \tfrac{b}{a}<\tfrac{d}{c}< \tfrac{1}{0} \hspace{5mm} \mbox{or} \hspace{5mm}\tfrac{0}{1} < \tfrac{b}{a}<\tfrac{d}{c} \le \tfrac{1}{0}. 
\end{equation}
Consider the train tack $f_A(\tau_0)$, the image  of $\tau_0$ under $f_A: \Sigma_{1,1} \to \Sigma_{1,1}$. 
By (\ref{equation_slope}) we see that $A$ 
takes the square $[0,1]\times [0,1] \subset \R^2$ to a parallelogram in the first quadrant of $\R^2$.  
As a consequence the NW and SE right-angle corners at the intersection of $\gamma_{\frac{0}{1}}$ and $\gamma_{\frac{1}{0}}$ are mapped to the NW and SE obtuse angle corners at the intersection  of $f_A(\gamma_{\frac{0}{1}})= \gamma_{\frac{b}{a}}$ and 
$f_A(\gamma_{\frac{1}{0}})= \gamma_{\frac{d}{c}}$. 
This implies that $f_A(\tau_0)$ is the union of $f_A(\gamma_{\frac{0}{1}}) \cup f_A(\gamma_{\frac{1}{0}})$ with the obtuse angles at the intersection flatten. 
This proves Statement (2). 
\end{proof}

\begin{figure}[ht]
\centering
\includegraphics[height=3.2cm]{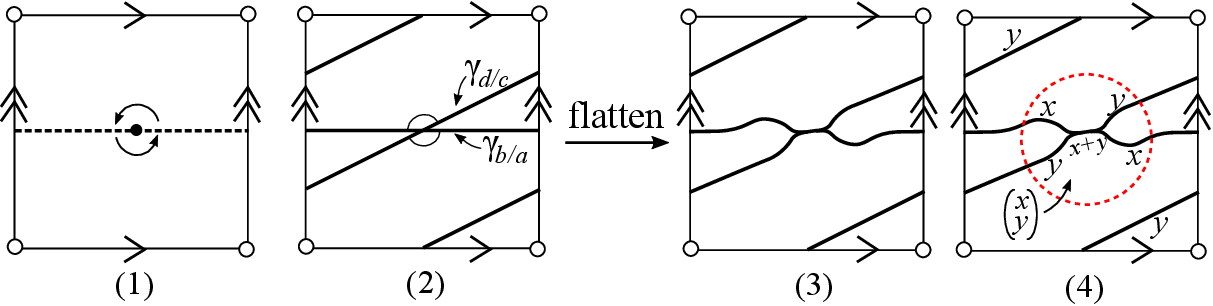}
\caption{(1) The hyperelliptic involution $f_{-\I}$. 
(2) The obtuse angles at the intersection point $\gamma_{\frac{b}{a}} \cap \gamma_{\frac{d}{c}}$. 
(3) Flattening gives $\tau_{[\frac{b}{a}, \frac{d}{c}]}$.
(4) Convention~\ref{def:measure convention} for a measure $\mu=\vxy$ on $\tau_{[\frac{b}{a}, \frac{d}{c}]}$. 
$\gamma_{\frac{b}{a}}= \gamma_{\frac{0}{1}}$ and $\gamma_{\frac{d}{c}}= \gamma_{\frac{1}{2}}$ in this figure.}
\label{fig_flatten}
\end{figure}

\subsection{Agol cycles of pseudo-Anosov maps on $\Sigma_{1,1}$} 

The next theorem describes Agol cycles of pseudo-Anosov maps on $\Sigma_ {1,1}$ 
induced by hyperbolic elements $A \in \SL(2; \Z)$ with $\tr(A) >2$.

\begin{theorem}\label{theorem-2}
Let $A = L^{q_k} R^{p_k} \cdots L^{q_1} R^{p_1} \in  \SL(2; \Z)$ 
where 
$p_1, q_1, \cdots, p_k, q_k$ and $k$ are positive integers. 
Let $\ell= p_1 + q_1 + \cdots + p_k + q_k$ and 
$s$ be the slope of the eigenvectors with respect to the expanding eigenvalue $\lambda>1$ of $A$. 
For the pseudo-Anosov map $f_A: \Sigma_{1,1} \rightarrow \Sigma_{1,1}$ with  dilatation $\lambda$, 
we have the following.  
\begin{enumerate}
\item[(1)]
The measured train track $(\tau_0, \mu_0)= (\tau_{[\frac{0}{1}, \frac{1}{0}]}, \vs)$ (Figure~\ref{fig_base}-(3)) is suited to 
the stable measured lamination of  $f_A$. 

\item[(2)] 
Starting with the measured train track 
$ (\tau_0, \mu_0)$, 
the first $\ell+1$ terms 
$$(\tau_0, \mu_0)
\ls^{q_k} \ 
\rs^{p_k} \ 
\cdots \ 
\ls^{q_1} \ 
\rs^{p_1}
(\tau_\ell, \mu_\ell)
$$
of the maximal splitting sequence satisfies 
$(\tau_{\ell}, \mu_{\ell}) = f_A(\tau_0, \lambda^{-1} \mu_0)$.
Thus, it forms a length $\ell$ Agol cycle of $f_A$.
Moreover, $\tau_\ell=\tau_{[\frac{b}{a}, \frac{d}{c}]}$, 
where $\left(\begin{smallmatrix}a & c \\ b & d \end{smallmatrix}\right) = A$. 
\end{enumerate}
\end{theorem}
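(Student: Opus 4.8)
The plan is to prove Theorem~\ref{theorem-2} in three stages, handling Statement~(1) first and then building Statement~(2) on top of it and on Proposition~\ref{propA}.

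\textbf{Statement (1): the base measured train track is suited to the stable lamination.} I would start from the observation that $A\vs = \lambda\vs$ exhibits $\vs$ as the expanding eigenvector, so the stable lamination of the Anosov map $\bar f_A$ on $\T$ is carried by the linear foliation of slope $s$ on the quotient torus, and correspondingly the stable lamination of $f_A$ on $\Sigma_{1,1}$ is the restriction of that foliation. The base train track $\tau_0$, obtained by flattening the NW and SE right angles at the intersection of $\gamma_{\frac 01}$ and $\gamma_{\frac 10}$ (Figure~\ref{fig_base}), carries every leaf of positive irrational slope: the switch condition $\mu(c) = \mu(a) + \mu(b)$ with $\mu = \vs$ means the horizontal small branch gets weight $1$ and the vertical small branch gets weight $s$, which are exactly the intersection numbers of a slope-$s$ line with the meridian and longitude. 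So I would exhibit a differentiable map $f:\Sigma_{1,1}\to\Sigma_{1,1}$ homotopic to the identity that collapses the complementary once-punctured bigon of $\tau_0$ onto $\tau_0$ and carries the stable foliation to $\tau_0$ respecting tangent directions, then check the measure bookkeeping $\nu(f^{-1}(p)) = \mu(e)$ directly against Definition~\ref{definition:suited}. This is essentially Mosher's picture and should be routine once the weights are pinned down.

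\textbf{Statement (2): the combinatorial shape of the cycle.} This is where I would invoke Proposition~\ref{propA}(2) directly: it already gives the topological splitting sequence
$$\tau_0 \ls^{q_k}\ \rs^{p_k}\ \cdots\ \ls^{q_1}\ \rs^{p_1}\ \tau_\ell,$$
with $\tau_\ell = (f_L^{q_k}\circ f_R^{p_k}\circ\cdots\circ f_L^{q_1}\circ f_R^{p_1})(\tau_0) = f_A(\tau_0)$, and with $\tau_\ell = \tau_{[\frac ba,\frac dc]}$ where $\abcd = A$ by Proposition~\ref{propA1}(1). What remains is to upgrade this to a \emph{maximal measured} splitting sequence starting from $(\tau_0,\mu_0)$ with $\mu_0 = \vs$, i.e.\ to check that at each step the splitting dictated by Proposition~\ref{propA} is in fact the maximal one for the given measure, and that it is of the asserted left/right type. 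Writing $(\tau_i,\mu_i) = (\tau_i, K_i^{-1}\vs)$ with $K_i\in\SL^+(2;\Z)$ the corresponding matrix (as in the geometric description in the introduction), the weight vector transforms by $K_{i+1} = K_i L$ for a left split and $K_i R$ for a right split, and the single large branch forces the maximal splitting to be a single splitting; one then reads off from the entries of $K_i^{-1}\vs = K_i^{-1}\vs$ which of $z>x$ (left) or $x>z$ (right) holds at the large branch, and matches it against whether we are in an $L$-block or an $R$-block of the word $L^{q_k}R^{p_k}\cdots$. The inequality should come out as a positivity statement about the entries of $L^{-a_j}\cdots$ applied to $\vs$, using that $s$ has the purely periodic continued fraction $[\overline{q_k\colon p_k,\dots,q_1,p_1}]$.

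\textbf{Closing the cycle and identifying the rescaling.} Finally I would verify $(\tau_\ell,\mu_\ell) = f_A(\tau_0,\lambda^{-1}\mu_0)$. On train tracks this is $\tau_\ell = f_A(\tau_0)$, already established. On measures: $\mu_\ell = K_\ell^{-1}\vs = A^{-1}\vs = \lambda^{-1}\vs = \lambda^{-1}\mu_0$, since $A\vs = \lambda\vs$ implies $A^{-1}\vs = \lambda^{-1}\vs$, and by definition $(f_A)_*(\mu_0)$ assigns to a branch $e$ of $f_A(\tau_0) = \tau_\ell$ the weight $\mu_0$ of $f_A^{-1}(e)$, which is exactly what the bookkeeping of the matrix action records. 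Combined with Theorem~\ref{thm:Agol} and Statement~(1), this shows the displayed $\ell{+}1$ terms form an Agol cycle of $f_A$ of length $\ell$.

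\textbf{Main obstacle.} I expect the genuine work to be Step~2: proving that the splitting called for by Proposition~\ref{propA} really is the \emph{maximal} splitting for the measure $K_i^{-1}\vs$ at every one of the $\ell$ steps, and in particular that the left/right type is correct throughout each $L$- or $R$-block. This reduces to a sign/positivity analysis of the coordinates of $(L^{a}R^{b}\cdots)^{-1}\vs$, which is elementary but needs the purely periodic continued fraction hypothesis on $s$ to guarantee the relevant entry never vanishes or changes sign prematurely; getting that bookkeeping exactly right, together with the fact that a one-large-branch track admits only a single splitting as its maximal splitting, is the crux.
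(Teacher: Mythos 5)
Your plan follows the paper's proof in all essentials, and the one step you flag as the crux is exactly the step the paper spends its effort on, so let me say how it is closed there. For Statement (2), maximality is indeed automatic because each $\tau_i$ has a unique large branch (so a maximal splitting is a single splitting whose type is read off from Lemma~\ref{lemma:basic}), and the positivity bookkeeping you anticipate is supplied not by an ad hoc sign analysis but by the chain of division-with-remainder identities already derived in the proof of Proposition~\ref{prop:s}: setting $s_i=\lambda^{-1}y_i$, $r_i=\lambda^{-1}x_i$ one gets $s=q_k\cdot 1+s_{k-1}$, $1=p_k s_{k-1}+r_{k-1}$, \dots, $r_1=p_1 s_0+r_0$ with $0<r_0<s_0<\cdots<r_{k-1}<s_{k-1}<1<s$; Lemma~\ref{lemma-A} then says each block admits \emph{exactly} $q_j$ left (resp.\ $p_j$ right) splittings, the strict inequality on the remainder forcing the handoff to the opposite type at the right moment. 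This is precisely your ``purely periodic continued fraction'' input, packaged as a Euclidean algorithm on the eigenvector coordinates, and the closing identification $\mu_\ell=A^{-1}\vs=\lambda^{-1}\vs$ and $\tau_\ell=f_A(\tau_0)=\tau_{[\frac{b}{a},\frac{d}{c}]}$ via Proposition~\ref{propA1}-(1) matches your Step~3 verbatim. The only genuine divergence is Statement (1): you verify ``suited to'' directly by collapsing the complementary once-punctured bigon of $\tau_0$ onto the slope-$s$ foliation and matching intersection numbers, whereas the paper simply asserts that $(\tau_0,\vs)$ is the output of the Bestvina--Handel algorithm applied to $f_A$; your route is more self-contained (and is essentially Mosher's carrying argument), while the paper's is shorter but leans on an external algorithm. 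I see no gap in the proposal beyond the acknowledged bookkeeping, which the paper's Lemma~\ref{lemma-A} and Proposition~\ref{prop:s} discharge exactly as you predict.
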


Each entry of $A$ in Theorem~\ref{theorem-2} is positive; thus, $s>0$ by the Perron-Frobenius theorem.

\begin{convention}\label{def:measure convention}
Let $\tau= \tau_{[\frac{b}{a}, \frac{d}{c}]}$ be the train track in $\Sigma_{1,1}$ defined in Section~\ref{sec:2-1}. 
We fix a convention for a measure $\mu$ on $\tau_{[\frac{b}{a}, \frac{d}{c}]}$.
%We now assign a measure $\mu$ and consider the measured train track $(\tau_{[\frac{b}{a}, \frac{d}{c}]}, \mu)$ . 
See Figure~\ref{fig_flatten}-(4). 
By Proposition~\ref{propA1}-(2)  
the train track $\tau_{[\frac{b}{a}, \frac{d}{c}]}$ is formed flattening the obtuse angles at the intersection of  $\gamma_{\frac{b}{a}}$ and $\gamma_{\frac{d}{c}}$. 
Recall that $\frac{0}{1} \le \frac{b}{a}< \frac{d}{c} \le \frac{1}{0}$. 
Let $x$ (resp. $y$) be the weight of the small branch of $\tau_{[\frac{b}{a}, \frac{d}{c}]}$ 
which was originally contained in $\gamma_{\frac{b}{a}}$ (resp. $\gamma_{\frac{d}{c}}$) before the flattening. 
The large branch has weight $x+y$ by the switch condition. 
The vector $\left(\begin{smallmatrix} x \\ y \end{smallmatrix} \right)$ represents the measure $\mu$ and we write 
$\mu= \left(\begin{smallmatrix} x \\ y \end{smallmatrix} \right)_{\tau}$ 
specifying the train track. When there is no confusion, we simply denote it by $\mu= \left(\begin{smallmatrix} x \\ y \end{smallmatrix} \right)$. 
Note that  $( \tau_{[\frac{0}{1}, \frac{1}{0}]}, \left(\begin{smallmatrix} x \\ y \end{smallmatrix} \right))$ in Figure~\ref{fig_base}-(3) aligns with this convention. 
\end{convention}

\begin{remark}
\label{remark:scc}
We allow $\mu= \left(\begin{smallmatrix} 1 \\ 0 \end{smallmatrix} \right)$ and $\left(\begin{smallmatrix} 0 \\ 1 \end{smallmatrix} \right)$ 
in Convention~\ref{def:measure convention}. 
In these cases, the measured train track $(\tau_{[\frac{b}{a}, \frac{d}{c}]}, \mu)$  yields the simple closed curves $\gamma_{\frac{b}{a}}$ and $\gamma_{\frac{d}{c}}$ respectively.  
\end{remark}

\begin{lemma}
\label{lemma:basic}
\begin{enumerate}
\item 
If $x < y$ then 
$(\tau_{[\frac{b}{a}, \frac{d}{c}]}, \mu = \left(\begin{smallmatrix} x \\ y \end{smallmatrix}\right)) \ls 
(\tau_{[\frac{b+d}{a+c}, \frac{d}{c}]}, L^{-1} \mu= \left(\begin{smallmatrix} x \\ y-x \end{smallmatrix}\right))$. 

\item 
If $x>y$ then 
$(\tau_{[\frac{b}{a}, \frac{d}{c}]}, \mu = \left(\begin{smallmatrix} x \\ y \end{smallmatrix}\right)) \rs 
(\tau_{[\frac{b}{a}, \frac{b+d}{a+c}]}, R^{-1} \mu= \left(\begin{smallmatrix} x-y \\ y \end{smallmatrix}\right))$. 
\end{enumerate}
\end{lemma}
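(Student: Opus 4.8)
The plan is to unwind the definitions directly: a maximal splitting of a measured train track is determined by its behavior at the large branches of maximal weight, and the train track $\tau_{[\frac{b}{a},\frac{d}{c}]}$ has exactly one large branch, carrying all the weight. So first I would recall from Section~\ref{sec:2-1} that $\tau_{[\frac{b}{a},\frac{d}{c}]}$ consists of one large branch and two small branches, whose weights under Convention~\ref{def:measure convention} are $x$ (the small branch coming from $\gamma_{\frac{b}{a}}$), $y$ (the small branch coming from $\gamma_{\frac{d}{c}}$), and $x+y$ (the large branch) by the switch condition. Since the large branch is unique, the maximal splitting is a single splitting at that branch.

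Next I would identify the local picture at the large branch with Figure~\ref{fig_split_shift}-(1). Relabeling the four small branches meeting the two switches of the large branch so that the ``incoming'' pair has weights $x,y$ and the ``outgoing'' pair has weights $x,y$ (forced by the switch conditions together with the topology of $\tau_{[\frac{b}{a},\frac{d}{c}]}$, which has its two small branches attached as in Figure~\ref{fig_flatten}-(3)), I can read off from Definition~\ref{definition:3operations}-(1) which of the two inequalities $z>x$ or $x>z$ holds: the splitting is of left type precisely when $x<y$ and of right type precisely when $x>y$. This matches (\ref{eq:left-split}) and (\ref{eq:right-split}): the topological left splitting produces $\tau_{[\frac{b+d}{a+c},\frac{d}{c}]}$ and the topological right splitting produces $\tau_{[\frac{b}{a},\frac{b+d}{a+c}]}$. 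The only remaining point is to track the measure: after the splitting, the small branch that was the large branch loses the weight of the small branch it splits along. In the left case the resulting weights are $x$ on the branch from $\gamma_{\frac{b}{a}}$ and $y-x$ on the new small branch, i.e. the measure is $\left(\begin{smallmatrix} x \\ y-x\end{smallmatrix}\right)$; a direct check gives $L^{-1}\left(\begin{smallmatrix} x \\ y\end{smallmatrix}\right) = \left(\begin{smallmatrix} x \\ y-x\end{smallmatrix}\right)$, and one verifies that $\left(\begin{smallmatrix} x \\ y-x\end{smallmatrix}\right)$ is indeed the measure in the sense of Convention~\ref{def:measure convention} on $\tau_{[\frac{b+d}{a+c},\frac{d}{c}]}$, using that $\left(\begin{smallmatrix}a & c\\ b & d\end{smallmatrix}\right)L = \left(\begin{smallmatrix}a+c & c\\ b+d & d\end{smallmatrix}\right)$ has columns recording the slopes $\frac{b+d}{a+c}$ and $\frac{d}{c}$. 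The right case is symmetric with $R^{-1}\left(\begin{smallmatrix} x \\ y\end{smallmatrix}\right) = \left(\begin{smallmatrix} x-y \\ y\end{smallmatrix}\right)$.

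Concretely, for statement (1) I would argue: $x<y$ forces $x<x+y=$ (weight of large branch), so the train track admits a splitting at its unique large branch, and this splitting is maximal; consulting Figure~\ref{fig_split_shift}-(1) with the weight labeling above, the condition $x<y$ is exactly $z>x$ (equivalently $y>w$), so the splitting is a \emph{left} splitting, which by (\ref{eq:left-split}) changes the underlying train track to $\tau_{[\frac{b+d}{a+c},\frac{d}{c}]}$; the bookkeeping on weights gives the new measure $\left(\begin{smallmatrix} x \\ y-x\end{smallmatrix}\right) = L^{-1}\mu$. Statement (2) is the mirror image. I expect the only genuinely delicate step to be confirming, with care and reference to Figures~\ref{fig_split_shift} and \ref{fig_flatten}, that the orientation of the flattened picture of $\tau_{[\frac{b}{a},\frac{d}{c}]}$ matches the ``$z>x \Leftrightarrow$ left'' convention of Definition~\ref{definition:3operations}-(1) and that the surviving small-branch weights after splitting are as claimed rather than swapped; this is a finite check on the local model, carried out once, that then propagates verbatim to all $\tau_{[\frac{b}{a},\frac{d}{c}]}$ since the local picture near the large branch is always the same.
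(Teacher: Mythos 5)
Your proposal is correct and follows essentially the same route as the paper's own proof: identify the unique large branch of $\tau_{[\frac{b}{a},\frac{d}{c}]}$, invoke the defining relations (\ref{eq:left-split}) and (\ref{eq:right-split}) for the new underlying train track, and read off the measure from Figure~\ref{fig_split_shift}-(1) with $z=y$, $w=x$ together with Convention~\ref{def:measure convention}. The extra care you take in checking that the local model's orientation matches the left/right convention is exactly the ``finite check'' the paper performs implicitly via its figures.
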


\begin{proof}
We prove Statement (1). The proof of Statement (2) is similar. 
Recall that 
the train track $\tau_{[\frac{b}{a}, \frac{d}{c}]}$ consists of one large and two small branches. 
If $x< y$  the measured train track $(\tau_{[\frac{b}{a}, \frac{d}{c}]}, \mu )$ admits a left splitting 
$(\tau_{[\frac{b}{a}, \frac{d}{c}]}, \mu ) \ls (\tau', \mu')$. 
%and let $(\tau', \mu')$ be the resulting measured train track. 
Then $\tau'= \tau_{[\frac{b+d}{a+c}, \frac{d}{c}]}$ by (\ref{eq:left-split}). 
To verify $\mu'= L^{-1} \mu$, 
we use Figure~\ref{fig_split_shift}-(1) substituting $z=y$ and $w=x$. 
Then by Convention~\ref{def:measure convention}  
we have $\mu'= L^{-1} \mu=  \left(\begin{smallmatrix} x \\ y-x \end{smallmatrix}\right)$. 
This completes the proof. 
% \left(\begin{smallmatrix} x \\ y-x \end{smallmatrix}\right)$, 
%see Figure~\ref{fig_mosher_split}. 
%Since $\tau_{[\frac{b}{a}, \frac{d}{c}]}$ consists of one large and two small branches, 
%the weights $x,y,z$ and $w$ of small branches in the same figure 
%satisfy $z=y$ and $w= x$ by Convention~\ref{def:measure convention}. 
%The resulting measured train track 
%$(\tau', \mu') = \ \ls  (\tau_{[\frac{b}{a}, \frac{d}{c}]}, \mu )$ 
%consists of one large branch with the weight $y$ and 
%two small branches with the weights $x$ and $y-x$. 
%By Convention~\ref{def:measure convention}  
%we have $\mu'= L^{-1} \mu=  \left(\begin{smallmatrix} x \\ y-x \end{smallmatrix}\right)$. 
%see Figure~\ref{fig:l-split and r-split} substituting $w= x$ and $z=y$. 
\end{proof}

%\begin{figure}[htbp]
%\centering
%\includegraphics[height=5cm]{mosher_split.eps}
%\caption{Proof of Lemma~\ref{lemma:basic}: 
%Left/right splitting of $(\tau_{[\frac{b}{a}, \frac{d}{c}]},  \left(\begin{smallmatrix} x \\ y \end{smallmatrix}\right))$. 
%%$(\tau_{[\frac{b}{a}, \frac{d}{c}]},  \left(\begin{smallmatrix} x \\ y \end{smallmatrix}\right)) \ls 
%%(\tau_{[\frac{b+d}{a+c}, \frac{d}{c}]},  \left(\begin{smallmatrix} x \\ y-x \end{smallmatrix}\right))$ and 
%%$(\tau_{[\frac{b}{a}, \frac{d}{c}]}, \left(\begin{smallmatrix} x \\ y \end{smallmatrix}\right)) \rs 
%%(\tau_{[\frac{b}{a}, \frac{b+d}{a+c}]},  \left(\begin{smallmatrix} x-y \\ y \end{smallmatrix}\right))$.
%}
%\label{fig_mosher_split} 
%\end{figure}

\begin{lemma}
\label{lemma-A}
For the measured train track $(\tau_{[\frac{b}{a}, \frac{d}{c}]}, \mu = \left(\begin{smallmatrix} x \\ y \end{smallmatrix}\right))$ 
we have the following. 

\begin{enumerate}
\item 
Suppose that $y=  qx +r$ with the quotient $q \in \N$ and the remainder $0\leq r< x$. 
Then $(\tau_{[\frac{b}{a}, \frac{d}{c}]}, \mu )$ admits $q$ left splittings consecutively and 
$$(\tau_{[\frac{b}{a}, \frac{d}{c}]}, \mu ) \ls^q 
(\tau', \mu'):= (\tau_{[\frac{b+ qd}{a+ qc}, \frac{d}{c}]}, L^{-q}\mu = \left(\begin{smallmatrix} x \\ r \end{smallmatrix}\right)).$$
Since $r \not> x$, $(\tau', \mu')$ cannot admit any more left splittings. 
Moreover, if the remainder $r \neq 0$ then $(\tau', \mu')$ falls into Case (2) below.

\item
Suppose that $x=  py + r$ with the quotient $p \in \N$ and the remainder $0\leq r< y$. 
Then  $(\tau_{[\frac{b}{a}, \frac{d}{c}]}, \mu )$ admits $p$ right splittings consecutively and 
$$(\tau_{[\frac{b}{a}, \frac{d}{c}]}, \mu ) \rs^p 
(\tau'', \mu''):=  (\tau_{[\frac{b}{a}, \frac{pb+d}{pa+c}]}, R^{-p} \mu  =\left(\begin{smallmatrix} r \\ y \end{smallmatrix}\right)). $$
Since $r \not> y$, $(\tau'', \mu'')$ cannot admit any more right splittings. 
Moreover, if the remainder $r \neq 0$ 
then $(\tau'', \mu'')$ falls into Case (1) above. 
\end{enumerate}
\end{lemma}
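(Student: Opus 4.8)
The plan is to prove Lemma~\ref{lemma-A} by iterating Lemma~\ref{lemma:basic} and tracking the weight vector through the Euclidean-type algorithm hidden in the statement. I will prove Statement (1) in detail; Statement (2) is entirely symmetric under swapping the roles of the two coordinates (equivalently, conjugating by the involution exchanging $L$ and $R$).

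First I would set up an induction on the quotient $q$. For the base case $q=0$ there is nothing to split, since $y = r < x$ means $(\tau_{[\frac{b}{a}, \frac{d}{c}]}, \mu)$ does not admit a left splitting. For $q \geq 1$ we have $y = qx + r \geq x$, and in fact $y > x$ unless $q=1$ and $r=0$; in either case $y \geq x$ is enough to note $y \not< x$ fails only when $y = x$, which is the boundary case $q=1, r=0$ giving $\mu' = \left(\begin{smallmatrix} x \\ 0 \end{smallmatrix}\right)$. Whenever $y > x$, Lemma~\ref{lemma:basic}(1) applies and yields one left splitting
$$(\tau_{[\frac{b}{a}, \frac{d}{c}]}, \left(\begin{smallmatrix} x \\ y \end{smallmatrix}\right)) \ls (\tau_{[\frac{b+d}{a+c}, \frac{d}{c}]}, L^{-1}\mu = \left(\begin{smallmatrix} x \\ y-x \end{smallmatrix}\right)).$$
Now the new weight vector has the same first coordinate $x$, and its second coordinate $y - x = (q-1)x + r$ realizes the same remainder $r$ with quotient $q-1$. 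The Farey interval has updated from $[\frac{b}{a}, \frac{d}{c}]$ to $[\frac{b+d}{a+c}, \frac{d}{c}]$, i.e. the corresponding matrix is right-multiplied by $L$, so after $j$ left splittings the matrix is $\abcd L^j = \left(\begin{smallmatrix}a & c \\ b + jd & d \end{smallmatrix}\right)$ — wait, more carefully, $\left(\begin{smallmatrix}a & c \\ b & d \end{smallmatrix}\right)L = \left(\begin{smallmatrix}a+c & c \\ b+d & d \end{smallmatrix}\right)$, so after $j$ steps the matrix is $\left(\begin{smallmatrix}a+jc & c \\ b+jd & d \end{smallmatrix}\right)$, giving Farey interval $[\frac{b+jd}{a+jc}, \frac{d}{c}]$. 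Applying the inductive hypothesis to the new measured train track (which has quotient $q-1$ and the same remainder $r$) completes $q$ total left splittings and lands at $(\tau_{[\frac{b+qd}{a+qc}, \frac{d}{c}]}, L^{-q}\mu = \left(\begin{smallmatrix} x \\ r \end{smallmatrix}\right))$, as claimed.

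It then remains to check the two tail assertions. Since the final weight vector is $\left(\begin{smallmatrix} x \\ r \end{smallmatrix}\right)$ with $r < x$, we have $r \not> x$, so by Definition~\ref{definition:3operations}(1) (the left-splitting criterion $z > x$, here with the vertical weight playing the role of what must exceed) no further left splitting is admitted; and if $r \neq 0$, then $r > 0$ means the relation between the two weights is $x > r > 0$, which is exactly the hypothesis of Case (2) with $(x, y)$ replaced by $(x, r)$, so $(\tau', \mu')$ falls into Case (2). The analogous statements for Case (2) follow by the symmetric argument. The main (and only real) obstacle is bookkeeping: one must be careful that Convention~\ref{def:measure convention} is consistent across the splittings — that is, that the coordinate labeled $x$ after each splitting really is the weight of the branch descending from $\gamma_{\frac{b+jd}{a+jc}}$ — but this is guaranteed by the way Lemma~\ref{lemma:basic}(1) was proved (via Figure~\ref{fig_split_shift}-(1) with $z = y$, $w = x$), so no genuine difficulty arises beyond matching indices.
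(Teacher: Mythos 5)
Your proof is correct and takes essentially the same route as the paper, whose entire proof of this lemma is the single sentence ``Applying Lemma~\ref{lemma:basic} repeatedly, we obtain the desired statements''; your induction on the quotient $q$, the bookkeeping of the Farey interval via $\abcd L^{j} = \left(\begin{smallmatrix}a+jc & c \\ b+jd & d \end{smallmatrix}\right)$, and the verification of the two tail assertions simply make that iteration explicit. The only loose thread is the degenerate case $y=x$ (i.e.\ $q=1$, $r=0$), where the strict inequality in Definition~\ref{definition:3operations}-(1) fails; the paper itself defers this to the ``extended maximal splitting'' remark after Proposition~\ref{prop:A}, so your treatment is consistent with theirs.
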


\begin{proof}
Applying Lemma~\ref{lemma:basic} repeatedly, we obtain the desired statements. 
\end{proof}

\begin{corollary}\label{cor:tau-consequtive}
For the measured train track $(\tau_0, \vxy)$ we have the following. 
\begin{enumerate}
\item
If $y>qx$ then  $(\tau_0, \vxy)$ admits $q$ left splittings consecutively  and 
$$(\tau_0, \vxy) \ls^q \ f_L^q(\tau_0,  L^{-q}\vxy).$$
\item
If $x>py$ then  $(\tau_0, \vxy)$ admits $p$ right splittings consecutively and
$$(\tau_0, \vxy) \rs^p \ f_R^p (\tau_0, R^{-p}\vxy).$$
\end{enumerate}
\end{corollary}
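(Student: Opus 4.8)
The plan is to derive Corollary~\ref{cor:tau-consequtive} directly from Lemma~\ref{lemma-A} by specializing to the base train track $\tau_0 = \tau_{[\frac{0}{1},\frac{1}{0}]}$, whose corresponding matrix is $\left(\begin{smallmatrix}a & c \\ b & d\end{smallmatrix}\right) = \left(\begin{smallmatrix}1 & 0 \\ 0 & 1\end{smallmatrix}\right)$, i.e.\ $a=d=1$ and $b=c=0$. First I would observe that the hypothesis $y>qx$ in Statement (1) is equivalent, via the division algorithm, to saying that when we write $y = q'x + r$ with quotient $q' \in \N$ and remainder $0\le r<x$, we have $q' \ge q$; in particular $(\tau_0,\vxy)$ admits at least $q$ consecutive left splittings by the first sentence of Lemma~\ref{lemma-A}-(1). (Here one uses $x>0$, which holds since $\vxy$ is a genuine positive measure; the degenerate cases $\mu = \left(\begin{smallmatrix}1\\0\end{smallmatrix}\right), \left(\begin{smallmatrix}0\\1\end{smallmatrix}\right)$ from Remark~\ref{remark:scc} are excluded because then $\tau_0$ carries a simple closed curve and admits no splitting.) Then I would apply Lemma~\ref{lemma-A}-(1) — more precisely its building block Lemma~\ref{lemma:basic}-(1) iterated $q$ times — to get
$$(\tau_0,\vxy) \ls^q (\tau_{[\frac{0+q\cdot 1}{1+q\cdot 0},\frac{1}{0}]},\ L^{-q}\vxy) = (\tau_{[\frac{q}{1},\frac{1}{0}]},\ L^{-q}\vxy).$$

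Next I would identify the train-track part of this output with $f_L^q(\tau_0)$: Proposition~\ref{propA}-(1) states exactly that $\tau_0 \ls^q \tau_{[\frac{q}{1},\frac{1}{0}]} = f_L^q(\tau_0)$, so the underlying train track after $q$ left splittings of $\tau_0$ is $f_L^q(\tau_0)$. It remains to check that the measured train track equals $f_L^q(\tau_0, L^{-q}\vxy)$, i.e.\ that the pushforward measure $(f_L^q)_*(L^{-q}\vxy)$ on $f_L^q(\tau_0)$ agrees with the measure $L^{-q}\vxy$ assigned to $\tau_{[\frac{q}{1},\frac{1}{0}]}$ under Convention~\ref{def:measure convention}. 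This is the one point requiring a small argument: under the identification $\SL(2;\Z)\cong\MCG(\Sigma_{1,1})$, the map $f_L$ acts on the weight vectors of Mosher's train tracks exactly by the matrix $L$ — flattening the intersection of $\gamma_{b/a}$ and $\gamma_{d/c}$ and pushing forward a weight vector $\vxy$ by $f_A$ produces the train track $\tau_{[\frac{b}{a},\frac{d}{c}]}$ with weight vector $A\vxy$, because $f_A$ carries $\gamma_{0/1}$ (weight coordinate $x$) to $\gamma_{b/a}$ and $\gamma_{1/0}$ (weight coordinate $y$) to $\gamma_{d/c}$ by Proposition~\ref{propA1}. Applying this with $A = L^q$ to the measure $L^{-q}\vxy$ gives pushforward measure $L^q(L^{-q}\vxy) = \vxy$ on $f_L^q(\tau_0) = \tau_{[\frac{q}{1},\frac{1}{0}]}$; but Convention~\ref{def:measure convention} assigns to $\tau_{[\frac{q}{1},\frac{1}{0}]}$ (whose two small branches come from $\gamma_{q/1}$ and $\gamma_{1/0}$) precisely the vector whose entries are the weights $x, r$ where $L^{-q}\vxy = \left(\begin{smallmatrix}x\\r\end{smallmatrix}\right)$ — and one checks $L^q\left(\begin{smallmatrix}x\\r\end{smallmatrix}\right) = \left(\begin{smallmatrix}x\\y\end{smallmatrix}\right)$, consistent with the previous sentence. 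Hence $(\tau_0,\vxy)\ls^q f_L^q(\tau_0, L^{-q}\vxy)$, proving Statement (1). Statement (2) follows by the mirror-image argument, using Lemma~\ref{lemma-A}-(2), Proposition~\ref{propA}-(1) for $\rs^p$, and the $R$-analogue of the measure-compatibility check.

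The main obstacle — really the only non-bookkeeping step — is the measure-compatibility claim: verifying that the \emph{intrinsic} measure $L^{-q}\vxy$ that Lemma~\ref{lemma-A} attaches to the split train track $\tau_{[\frac{q}{1},\frac{1}{0}]}$ via Convention~\ref{def:measure convention} coincides with the \emph{pushforward} measure obtained by applying $f_L^q$ to $(\tau_0, L^{-q}\vxy)$. Once one knows that $f_A$ transports the convention-measure $\vxy$ on $\tau_0$ to the convention-measure $A\vxy$ on $\tau_{[\frac{b}{a},\frac{d}{c}]}$ — which is immediate from how $f_A$ maps the curves $\gamma_{0/1},\gamma_{1/0}$ to $\gamma_{b/a},\gamma_{d/c}$ and from the definition of $\phi_*$ — the corollary reduces to the matrix identity $L^q(L^{-q}\vxy)=\vxy$ together with the already-established topological statements. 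I would therefore state the measure-transport fact explicitly (perhaps as a one-line remark referencing Proposition~\ref{propA1} and Convention~\ref{def:measure convention}) before combining everything.
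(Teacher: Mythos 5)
Your proposal is correct in substance and follows essentially the same route as the paper: the paper's proof is exactly ``by Proposition~\ref{propA}-(1) and Lemma~\ref{lemma-A}-(1), $(\tau_0,\vxy)\ls^q (f_L^q(\tau_0), L^{-q}\vxy)$, and by Convention~\ref{def:measure convention} this equals $f_L^q(\tau_0, L^{-q}\vxy)$,'' which is your argument compressed to two lines. You correctly identify the measure-compatibility step as the only point needing thought.

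One sentence in your write-up is wrong as literally stated and should be repaired: ``pushing forward a weight vector $\vxy$ by $f_A$ produces the train track $\tau_{[\frac{b}{a},\frac{d}{c}]}$ with weight vector $A\vxy$.'' In Convention~\ref{def:measure convention} the coordinates of the weight vector are indexed by which curve ($\gamma_{\frac{b}{a}}$ or $\gamma_{\frac{d}{c}}$) the small branch lies in, and since $f_A$ carries $\gamma_{\frac{0}{1}}\mapsto\gamma_{\frac{b}{a}}$ and $\gamma_{\frac{1}{0}}\mapsto\gamma_{\frac{d}{c}}$ while $\phi_*(\mu)(e)=\mu(\phi^{-1}(e))$ preserves branch weights, the pushforward of the convention-vector $\vxy$ on $\tau_0$ is the \emph{same} convention-vector $\vxy$ on $\tau_{[\frac{b}{a},\frac{d}{c}]}$ --- not $A\vxy$. (Your own justification of the sentence actually proves this unchanged-vector version; the vector $A\vxy$ is the absolute $\R^2$-coordinate $K\mu$ of the carried lamination, a different object.) If one took your sentence at face value in convention coordinates, it would give $f_L^q(\tau_0, L^{-q}\vxy)=(\tau_{[\frac{q}{1},\frac{1}{0}]},\vxy)$, contradicting the target $(\tau_{[\frac{q}{1},\frac{1}{0}]}, L^{-q}\vxy)$; with the corrected transport fact the identity $(f_L^q(\tau_0), L^{-q}\vxy)= f_L^q(\tau_0, L^{-q}\vxy)$ is immediate and no reconciliation via $L^q\left(\begin{smallmatrix}x\\r\end{smallmatrix}\right)=\left(\begin{smallmatrix}x\\y\end{smallmatrix}\right)$ is needed. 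The rest of your argument (the division-algorithm reduction to Lemma~\ref{lemma-A}, the identification $\tau_{[\frac{q}{1},\frac{1}{0}]}=f_L^q(\tau_0)$, and the mirror case for $\rs^p$) is fine.
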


\begin{proof}
We prove Statement (1). Statement (2) follows similarly. 
Suppose that $y>qx$. 
By Proposition~\ref{propA}-(1) and Lemma~\ref{lemma-A}-(1), 
it follows that 
$(\tau_0,\vxy ) \ls^q  (f_L^q(\tau_0), L^{-q}\vxy)$. 
By Convention ~\ref{def:measure convention} 
it holds 
$(f_L^q(\tau_0), L^{-q}\vxy)= f_L^q(\tau_0, L^{-q}\vxy)$ and 
we obtain $(\tau_0,\vxy ) \ls^q f_L^q(\tau_0, L^{-q}\vxy) $.  
\end{proof}

Let $A = L^{q_k} R^{p_k} \cdots L^{q_1} R^{p_1} $ as in Theorem~\ref{theorem-2}. 
Let $s$ be the slope of the eigenvectors with respect to the expanding eigenvalue $\lambda>1$ of $A$, that is 
$A \left(\begin{smallmatrix} 1 \\ s \end{smallmatrix}\right) = \lambda \left(\begin{smallmatrix} 1 \\ s \end{smallmatrix}\right)$. 
Since $\lambda^2 - \tr(A) \lambda + 1=0$ and $\tr(A) >2$, 
the eigenvalues of $A$ are quadratic irrationals. 
This implies that $s$ is also a quadratic irrational.

Let us consider the 
infinite continued fraction expansion of $s$. 
$$
s= n_0+ \cfrac{1}{n_1 + 
           \cfrac{1}{\ddots+ 
           \cfrac{1}{n_{k}+ \cdots }}}
= [n_0:n_1, \cdots, n_k, \cdots]
$$
with $n_i \in {\Bbb Z}$ and $n_i >0$ for $i \ge 1$. 
By Lagrange's theorem  
the expansion is eventually periodic. 
i.e., there exists $t \ge 1$ with $n_i = n_{i+t}$ for all $i \gg 1$ \cite{Karpenkov22}. 
Hence the expansion of $s$ is of the form: 
$$s= [n_0: n_1, \cdots, n_{k-1}, m_0, m_1, \cdots, m_{t-1}, m_0, m_1, \cdots, m_{t-1}, \cdots\ ],$$ 
which is denoted by $[n_0: n_1, \cdots, n_{k-1}, \ol{m_0, m_1, \cdots, m_{t-1}}]$. 
We now claim that the expansion of $s$ is purely periodic.

\begin{proposition}\label{prop:s}
The  slope $s$ satisfies the following: 
\begin{enumerate}
\item
$s>1$.  
\item
$s$ admits a purely periodic continued fraction expansion 
$$s = [ \ol{q_k: p_k, q_{k-1}, p_{k-1}, \cdots, q_1, p_1}].$$
\end{enumerate}
\end{proposition}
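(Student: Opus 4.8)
The plan is to recast the statement as a fixed-point problem for a fractional linear transformation and then recognize the purely periodic continued fraction as its positive fixed point. Write $A=\abcd\in\SL^+(2;\Z)$; as noted right after Theorem~\ref{theorem-2}, all of $a,b,c,d$ are positive and $s>0$. Comparing the two coordinates of $A\vs=\lambda\vs$ gives $\lambda=a+cs$ and $\lambda s=b+ds$, hence $s=\tfrac{b+ds}{a+cs}$; that is, $s$ is a fixed point of the fractional linear transformation $z\mapsto\tfrac{dz+b}{cz+a}$ determined by the matrix $PAP=\left(\begin{smallmatrix}d&b\\c&a\end{smallmatrix}\right)$, where $P=\left(\begin{smallmatrix}0&1\\1&0\end{smallmatrix}\right)$. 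A positive fixed point of this transformation corresponds to an eigenvector of $PAP$ with both coordinates positive, and $PAP$ has strictly positive entries, so by Perron--Frobenius there is at most one such eigenvector up to scaling; hence the transformation has at most one positive fixed point, which (as $s>0$) is $s$ itself. It therefore suffices to exhibit $[\,\ol{q_k:p_k,q_{k-1},p_{k-1},\dots,q_1,p_1}\,]$ as a positive fixed point of the transformation determined by $PAP$.

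To compute $PAP$ in the needed shape I would use two elementary matrix identities. First, conjugation by $P$ interchanges the generators, $PLP=R$ and $PRP=L$; inserting $P^2=\I$ between consecutive factors of $A=L^{q_k}R^{p_k}\cdots L^{q_1}R^{p_1}$ gives $PAP=R^{q_k}L^{p_k}\cdots R^{q_1}L^{p_1}$. Second, with $C_n:=\left(\begin{smallmatrix}n&1\\1&0\end{smallmatrix}\right)$ a one-line computation gives $R^aL^b=\left(\begin{smallmatrix}1+ab&a\\b&1\end{smallmatrix}\right)=C_aC_b$. Grouping the word for $PAP$ into the blocks $R^{q_i}L^{p_i}$ and applying this identity,
\[
PAP=(C_{q_k}C_{p_k})(C_{q_{k-1}}C_{p_{k-1}})\cdots(C_{q_1}C_{p_1})=C_{q_k}C_{p_k}C_{q_{k-1}}C_{p_{k-1}}\cdots C_{q_1}C_{p_1}.
\]

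Finally I would invoke the classical dictionary between continued fractions and the matrices $C_n$ (see \cite{Karpenkov22}): for positive integers $a_0,\dots,a_{N-1}$, the fractional linear transformation determined by $C_{a_0}C_{a_1}\cdots C_{a_{N-1}}$ is
\[
t\ \mapsto\ a_0+\cfrac{1}{a_1+\cfrac{1}{\ddots+\cfrac{1}{a_{N-1}+\frac{1}{t}}}},
\]
so the purely periodic continued fraction $\xi:=[\,\ol{a_0:a_1,\dots,a_{N-1}}\,]$, which by definition satisfies precisely that equation with $t=\xi$, is a fixed point of this transformation; moreover $\xi>a_0\ge 1$, since an infinite continued fraction with positive integer partial quotients has value exceeding its first partial quotient. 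Taking $(a_0,\dots,a_{N-1})=(q_k,p_k,q_{k-1},p_{k-1},\dots,q_1,p_1)$ with $N=2k$ and combining with the previous display, $\xi=[\,\ol{q_k:p_k,q_{k-1},p_{k-1},\dots,q_1,p_1}\,]>1$ is a positive fixed point of the transformation determined by $PAP$. By the uniqueness established in the first paragraph, $s=\xi$; this is statement~(2), and statement~(1) follows at once from $s=\xi>q_k\ge 1$.

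The argument is elementary once set up; the care is entirely in the bookkeeping --- conjugating by $P$ correctly, and grouping $R^{q_k}L^{p_k}\cdots R^{q_1}L^{p_1}$ so that the subscripts of the $C_n$'s come out in exactly the order $q_k,p_k,q_{k-1},p_{k-1},\dots,q_1,p_1$, which is the sequence of exponents of $A=L^{q_k}R^{p_k}\cdots L^{q_1}R^{p_1}$ read from left to right, i.e. the claimed period. A cosmetic alternative to the Perron--Frobenius step is to note that $s$ lies in the Farey interval $[\tfrac ba,\tfrac dc]$ attached to $A$, which the leftmost block $L^{q_k}$ of the word for $A$ forces into $[\tfrac{q_k}{1},\tfrac10]$, giving $s\ge\tfrac ba\ge q_k$ and hence $s>1$ directly; but keeping everything inside $PAP$ is shorter.
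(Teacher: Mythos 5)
Your proof is correct, and it takes a genuinely different route from the paper's. The paper iterates $A$ on $\vs$ forwards, recording the intermediate vectors $\left(\begin{smallmatrix}x_i\\y_i\end{smallmatrix}\right)$ produced by each block $R^{p_i}$, $L^{q_i}$; this yields a strictly increasing chain $x_0<y_0<x_1<\cdots<x_k<y_k$ whose endpoint is $\lambda\vs$, and the hypothesis $s<1$ would force $x_k>y_k$, a contradiction --- that is how the paper gets statement (1). The same computation read backwards is a run of the Euclidean algorithm; dividing through by $\lambda$ produces the division-with-remainder equations (\ref{eq:s1})--(\ref{eq:s2k}), whose successive quotients are exactly $q_k,p_k,\dots,q_1,p_1$ and whose terminal ratio $s_0/r_0$ is again $s$, giving pure periodicity. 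Note that the paper's order of deduction is the reverse of yours: (1) is needed to order the remainders $0<r_0<s_0<\cdots<s_{k-1}<1<s$ in the proof of (2), whereas you obtain (1) as a corollary of (2). Your argument --- $s$ is the unique positive fixed point of the M\"obius transformation of $PAP$ by Perron--Frobenius, combined with the dictionary $PAP=C_{q_k}C_{p_k}\cdots C_{q_1}C_{p_1}$ between words in the matrices $C_n$ and continued fractions --- is shorter and more conceptual, and the matrix bookkeeping ($PLP=R$, $PRP=L$, $R^aL^b=C_aC_b$, and the resulting order of the partial quotients) all checks out. What your route does not produce is the explicit intermediate data $r_i=\lambda^{-1}x_i$, $s_i=\lambda^{-1}y_i$, which are not a throwaway in the paper: they are precisely the weights reused in the proof of Theorem~\ref{theorem-2} to drive the splitting sequence, so the paper's more pedestrian computation is doing double duty.
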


\begin{proof}
%\marginal{I noticed that the proof of Prop 3.17 contains the previous notations which we used in the old draft. I made a revision in \textcolor{red}{red}. Please check them.}
Elementary computation shows that $s>0$ as follows. 
Let $A=\left(\begin{smallmatrix}a & b \\ c & d\end{smallmatrix}\right)$. 
Since $p_1, q_1, \cdots, p_k, q_k$ are positive integers $a, b, c, d$ are also positive integers. 
We note $s = (\lambda-a)/b$ and $\lambda -a = (-a+d+\sqrt{(a+d)^2 - 4})/2$. We see  $-a+d+\sqrt{(a+d)^2 - 4} >0$ because $\tr(A)=a+d>2$ and 
\begin{eqnarray*}
(d+\sqrt{(a+d)^2 - 4})^2 - a^2 
&=&  2d(a+d)-4 + 2d\sqrt{(a+d)^2 - 4}\\ 
&\geq& 2(a+d)-4 + 2\sqrt{(a+d)^2 - 4}\\ 
&\geq&  2\sqrt{(a+d)^2 - 4} >0.
\end{eqnarray*}
We conclude $s>0$. 
(This fact also follows from the Perron-Frobenius theorem.)

Denote 
\begin{eqnarray*}
\left(\begin{smallmatrix}
x_0 \\ y_0
\end{smallmatrix}\right)
&:= &
\left(\begin{smallmatrix}
1 \\ s
\end{smallmatrix}\right),
\\
\left(\begin{smallmatrix}
x_1 \\ y_0
\end{smallmatrix}\right)
&:= & R^{p_1}
\left(\begin{smallmatrix}
x_0 \\ y_0
\end{smallmatrix}\right) 
= 
\left(\begin{smallmatrix}
x_0 + p_1 y_0 \\ y_0
\end{smallmatrix}\right), 
\\
\left(\begin{smallmatrix}
x_1 \\ y_1
\end{smallmatrix}\right)
&:= & 
L^{q_1}  R^{p_1}
\left(\begin{smallmatrix}
x_0 \\ y_0
\end{smallmatrix}\right) 
=
L^{q_1}
\left(\begin{smallmatrix}
x_1 \\ y_0
\end{smallmatrix}\right) 
= 
\left(\begin{smallmatrix}
x_1 \\ q_1 x_1 + y_0
\end{smallmatrix}\right), 
\\
& \vdots &
\\
\left(\begin{smallmatrix}
x_k \\ y_{k-1}
\end{smallmatrix}\right)
&:= & 
R^{p_k} L^{q_{k-1}}\cdots L^{q_1}  R^{p_1}
\left(\begin{smallmatrix}
x_0 \\ y_0
\end{smallmatrix}\right) 
=
R^{p_k}
\left(\begin{smallmatrix}
x_{k-1} \\ y_{k-1}
\end{smallmatrix}\right) 
= 
\left(\begin{smallmatrix}
x_{k-1}+ p_k y_{k-1} \\ y_{k-1}
\end{smallmatrix}\right), 
\\
\left(\begin{smallmatrix}
x_k \\ y_k
\end{smallmatrix}\right)
&:= & 
L^{q_k}R^{p_k} \cdots L^{q_1}  R^{p_1}
\left(\begin{smallmatrix}
x_0 \\ y_0
\end{smallmatrix}\right) 
=
L^{q_k}
\left(\begin{smallmatrix}
x_k \\ y_{k-1}
\end{smallmatrix}\right) 
= 
\left(\begin{smallmatrix}
x_k \\ q_k x_k + y_{k-1}
\end{smallmatrix}\right). 
\end{eqnarray*}
Since exponents $p_1, q_1, \cdots, p_k, q_k$ are all positive integers we have two cases depending on (i) $1<s$ or (ii) $s<1$. 
\begin{enumerate}
\item[(i)]
$1= x_0  < s=y_0 < x_1 < y_1 < x_2 < y_2 < \cdots < y_{k-1} < x_k < y_k$.
\item[(ii)]
$s=y_0 < 1=x_0 < x_1 < y_1 < x_2 < y_2 < \cdots < y_{k-1} < x_k < y_k.$
\end{enumerate}
Since $A \vs=\lambda \vs$
we have 
$\left(\begin{smallmatrix}x_k \\ y_k\end{smallmatrix}\right) 
= \lambda 
\left(\begin{smallmatrix}x_0 \\ y_0\end{smallmatrix}\right)
= \lambda 
\left(\begin{smallmatrix}1 \\ s\end{smallmatrix}\right)
$.
For (ii) we have $x_k=\lambda > s \lambda = y_k,$ which is a contradiction. 
Thus, we conclude $s>1$.
Statement (1) is proved.

Reading the above computation from backward we obtain $2k$ set of division-with-remainder equations as appear in the Euclidean Algorithm: 
\begin{eqnarray*}
y_k &=& q_k x_k + y_{k-1}, \\
x_k &=& p_k y_{k-1} + x_{k-1},\\
& \vdots & \\
y_1 &=& q_1 x_1 + y_0, \\
x_1 &=&  p_1 y_0 + x_0.
\end{eqnarray*}
Put $s_i = \frac{1}{\lambda} y_i$ and 
$r_i = \frac{1}{\lambda} x_i.$
Since $\lambda 
\left(\begin{smallmatrix}1 \\ s\end{smallmatrix}\right)= \left(\begin{smallmatrix}x_k \\ y_k\end{smallmatrix}\right) 
= \left(\begin{smallmatrix}x_k \\ q_k x_k + y_{k-1} \end{smallmatrix}\right)
$, multiplying $\frac{1}{\lambda}$ to both sides of the above $2k$ equations gives the following new $2k$ division-with-remainder equations: 
\begin{eqnarray}
s= s_k &=& q_k \cdot 1 + s_{k-1}, \label{eq:s1} \\
1= r_k &=& p_k \cdot s_{k-1} + r_{k-1},\\
&\vdots& \nonumber
\\  
s_1 &=& q_1 r_1 + s_0, \\
r_1 &=& p_1 s_0 + r_0, \label{eq:s2k}
\end{eqnarray}
where 
$
0<r_0 < s_0 < \cdots < r_{k-1} < s_{k-1} < 1 < s.
$
We obtain
\begin{eqnarray*}
\left(\begin{smallmatrix} 1 \\ s_{k-1} \end{smallmatrix}\right)
&=& 
\left(\begin{smallmatrix} 1 & 0\\ -q_k & 1 \end{smallmatrix}\right)
\left(\begin{smallmatrix} 1 \\ s \end{smallmatrix}\right),
\\
\left(\begin{smallmatrix} r_{k-1} \\ s_{k-1} \end{smallmatrix}\right)
&=& 
\left(\begin{smallmatrix} 1 & -p_k\\ 0 & 1 \end{smallmatrix}\right)
\left(\begin{smallmatrix} 1 \\ s_{k-1} \end{smallmatrix}\right),
\\
&\vdots&
\\
\left(\begin{smallmatrix} r_1 \\ s_0 \end{smallmatrix}\right)
&=& 
\left(\begin{smallmatrix} 1 & 0\\ -q_1 & 1 \end{smallmatrix}\right)
\left(\begin{smallmatrix} r_1 \\ s_1 \end{smallmatrix}\right),
\\
\left(\begin{smallmatrix} r_0 \\ s_0 \end{smallmatrix}\right)
&=& 
\left(\begin{smallmatrix} 1 & -p_1\\ 0 & 1 \end{smallmatrix}\right)
\left(\begin{smallmatrix} r_1 \\ s_0 \end{smallmatrix}\right).
\end{eqnarray*}
Thus, 
$
\left(\begin{smallmatrix} r_0 \\ s_0\end{smallmatrix}\right) 
=R^{-p_1}L^{-q_1}\cdots R^{-p_k}L^{-q_k}
\left(\begin{smallmatrix} 1 \\ s \end{smallmatrix}\right)
=A^{-1}\vs 
=\lambda^{-1}
\left(\begin{smallmatrix} 1 \\ s \end{smallmatrix}\right).
$
In other words, $\frac{s_0}{r_0}=s$ and 
$s=[q_k: p_k, q_{k-1}, p_{k-1}, \cdots, q_1, p_1, \frac{s_0}{r_0}=s].$ 
This shows that $s$ admits the purely periodic continued fraction expansion 
$s = [\ol{q_k: p_k, q_{k-1}, p_{k-1}, \cdots, q_1, p_1}].
$
\end{proof}

Stable and unstable measured laminations of a pseudo-Anosov map $\phi: \Sigma \rightarrow \Sigma$ are unique up to scaling and isotopy. 
The measured train track $(\tau, \mu)$ in Theorem~\ref{thm:Agol} is assumed to be trivalent and suited to the stable measured lamination. 
We explain how to find  such $(\tau, \mu)$: 
The Bestvina-Handel algorithm \cite{BestvinaHandel95} 
determines the Nielsen-Thurston type of a homeomorphism $\phi: \Sigma \rightarrow \Sigma$. 
If $\phi$ is  pseudo-Anosov then 
the algorithm provides us a measured train track $(\tau', \mu')$ which is suited to the stable measured lamination $(\mathcal{L}, \nu)$ of $\phi$.  
If $\tau'$ is trivalent, then 
we set $(\tau,\mu):= (\tau', \mu')$. 
Otherwise, 
by {\em combing} branches near switches with degree greater than $3$ (see \cite[Figure~7]{HodgsonIssaSegerman16}) 
we can obtain a trivalent train track $\tau$  with a transverse measure $\mu$ 
so that $(\tau, \mu)$ is suited to  $(\mathcal{L}, \nu)$. 
See  \cite[Section~3]{HodgsonIssaSegerman16} for more details. 

We are ready to prove Theorem~\ref{theorem-2}.

\begin{proof}[Proof of Theorem~\ref{theorem-2}]

Let $A= L^{q_k}R^{p_k} \cdots L^{q_1} R^{p_1}$ as in Theorem~\ref{theorem-2}. 
One sees that the measured train track
$(\tau_0,  \vs)$ (up to scaling)  
is obtained by applying the Bestvina-Handel algorithm to $f_A$. 
Since $\tau_0$ is trivalent Statement (1) holds.

We reuse the computations (\ref{eq:s1})--(\ref{eq:s2k}). 
Since $s = q_k \cdot 1 + s_{k-1}$
applying Lemma~\ref{lemma-A}-(1) and Corollary~\ref{cor:tau-consequtive}-(1) gives  
$$(\tau_0, \vs) \ls^{q_k} (\tau_{q_k}, \left(\begin{smallmatrix}1 \\ s_{k-1} \end{smallmatrix}\right)) =(\tau_{q_k}, L^{-q_k} \vs) = f_L^{q_k}(\tau_0, L^{-q_k} \vs).$$
 Since $1 = p_k \cdot s_{k-1} + r_{k-1}$ 
 we can apply Lemma~\ref{lemma-A}-(2) to $f_L^{q_k}(\tau_0, L^{-q_k} \vs)$ 
 and we see that 
 $f_L^{q_k}(\tau_0, L^{-q_k} \vs)$ admits $p_k$ right splittings consecutively. 
 Then by Lemma \ref{lemma:0A} we obtain 
 $$(\rs^{p_k} \circ f_L^{q_k})(\tau_0, L^{-q_k} \vs) = (f_L^{q_k} \circ \rs^{p_k})(\tau_0, L^{-q_k} \vs) 
 = (f_L^{q_k} \circ f_R^{p_k}) (\tau_0, R^{-p_k} L^{-q_k} \vs). $$
 In other words 
 $f_L^{q_k}(\tau_0, L^{-q_k} \vs) \rs^{p_k}  f_L^{q_k} \circ f_R^{p_k} (\tau_0, R^{-p_k} L^{-q_k} \vs) =  
  f_L^{q_k} \circ f_R^{p_k}(\tau_0,  \left(\begin{smallmatrix}r_{k-1} \\ s_{k-1} \end{smallmatrix}\right))$.  
  After repeating this $k-1$ more times, we obtain 
  $$(\tau_0, \vs) \ls^{q_k} \ \rs^{p_k} \cdots \ls^{q_1} \ \rs^{p_1} (\tau_{\ell}, \mu_{\ell}),$$ 
  where 
$(\tau_{\ell}, \mu_{\ell}) = (f_L^{q_k} \circ f_R^{p_k} \circ \dots \circ f_L^{q_1} \circ f_R^{p_1}) (\tau_0, R^{-p_1} L^{-q_1} \dots   R^{-p_k} L^{-q_k}  \vs) 
= f_A (\tau_0, A^{-1} \vs) = f_A (\tau_0, \lambda^{-1} \vs)$, which gives a length $\ell$ Agol cycle. % of the pseudo-Anosov map $f_A$. 
By Proposition~\ref{propA1}-(1) we have 
$ \tau_ {\ell}= f_A(\tau_0)= \tau_{[\frac{b}{a}, \frac{d}{c}]}$, 
where 
$\left(\begin{smallmatrix}a & c \\ b & d \end{smallmatrix}\right) =A$.   
This completes the proof.
\end{proof}

%\marginal{LR example is contained in Issa-Hodgeson-Segerman's paper}

\begin{example}
\label{ex_lr_torus}
Let $A = LR = \left(\begin{smallmatrix}1 & 1 \\ 1 & 2 \end{smallmatrix}\right)$. 
The mapping torus of $f_A: \Sigma_{1,1} \rightarrow \Sigma_{1,1}$ is the figure eight knot complement. 
See \cite{Gabai83} for example.   
The slope of eigenvectors with respect to the expanding eigenvalue $\lambda= \tfrac{3+ \sqrt{5}}{2}$ of $A$ is the golden ratio $s =  \tfrac{1+ \sqrt{5}}{2}$ which admits the continued fractional expansion $[\bar{1}]$. 
%= [1:1,1,\dots]$. 
By Theorem~\ref{theorem-2} and Lemma~\ref{lemma:basic}, 
$(\tau_0, \left(\begin{smallmatrix} 1 \\ s \end{smallmatrix}\right)) \ls 
(\tau_{[\frac{1}{1}, \frac{1}{0}]}, \left(\begin{smallmatrix} 1 \\ s-1 \end{smallmatrix}\right)) 
\rs 
(\tau_{[\frac{1}{1}, \frac{2}{1}]}, \left(\begin{smallmatrix} 2-s \\ s-1 \end{smallmatrix}\right)) 
= f_A (\tau_0, \lambda^{-1} \left(\begin{smallmatrix} 1 \\ s \end{smallmatrix}\right))$  
forms a length $2$ Agol cycle of  $f_A$. 
See Figure~\ref{fig_lr_torus}. 
\begin{figure}[htbp]
\centering
\includegraphics[height=3.2cm]{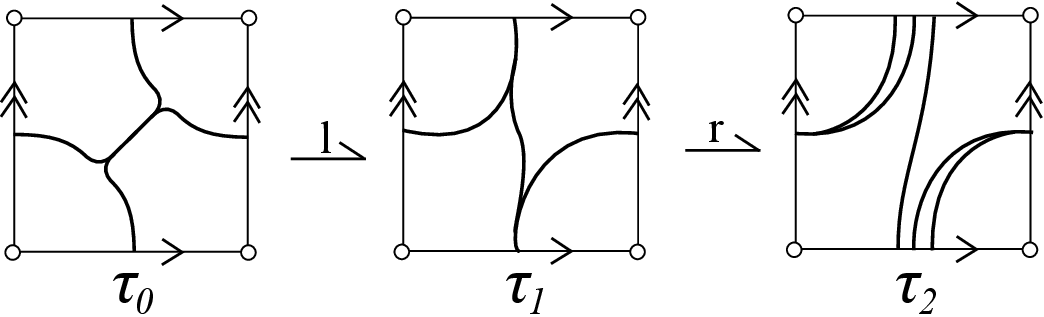}
\caption{Example~\ref{ex_lr_torus}: 
$\tau_0= \tau_{[\frac{0}{1}, \frac{1}{0}]} \ls \tau_1= \tau_{[\frac{1}{1}, \frac{1}{0}]} \rs \tau_2= \tau_{[\frac{1}{1}, \frac{2}{1}]}$.}
\label{fig_lr_torus} 
\end{figure}
\end{example}

Agol cycles of pseudo-Anosov maps induced by hyperbolic elements $A' \in  \SL(2;\Z)$ 
with $\tr(A')< -2$ have the following property. 

\begin{proposition}
\label{prop_trace-negative}
Let $A \in \SL^+(2;\Z)$ with $\tr(A) >2$. 
The pseudo-Anosov maps $f_A$ and $f_{-A}$ are not conjugate but they have equivalent Agol cycles. 
\end{proposition}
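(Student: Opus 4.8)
The proposition is not hard; the plan is to peel the hyperelliptic involution off as a harmless central factor $-A=(-\I)A$ and transport everything from $f_A$ to $f_{-A}$.

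\emph{Non-conjugacy.} Since $\tr(-A)=-\tr(A)<-2$, the matrix $-A$ has eigenvalues $-\lambda$ and $-\lambda^{-1}$, so $f_{-A}$ is again pseudo-Anosov with dilatation $\lambda$, the same as $f_A$. Under the isomorphism $\SL(2;\Z)\cong\MCG(\Sigma_{1,1})$, $A\mapsto f_A$, the maps $f_A$ and $f_{-A}$ are conjugate in $\MCG(\Sigma_{1,1})$ if and only if $A$ and $-A$ are conjugate in $\SL(2;\Z)$; but conjugate matrices have equal trace while $\tr(A)>2>-2>\tr(-A)$, so $f_A$ and $f_{-A}$ are not conjugate. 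This half is immediate.

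\emph{Equivalence of Agol cycles.} The first step is a measured refinement of Proposition~\ref{propA1}(2): for every Farey interval $[\frac ba,\frac dc]$ and every measure $\mu=\vxy$ one has $f_{-\I}(\tau_{[\frac ba,\frac dc]},\mu)=(\tau_{[\frac ba,\frac dc]},\mu)$. Indeed $f_{-\I}$ fixes the slope of every simple closed curve, so it preserves $\gamma_{\frac ba}$ and $\gamma_{\frac dc}$ separately, hence each of the two small branches of $\tau_{[\frac ba,\frac dc]}$ separately, hence the weights $x$ and $y$; combined with the set-level invariance of Proposition~\ref{propA1}(2) this gives the claim. Next, by Proposition~\ref{prop:conjugacy-class-SL2Z} I may replace $A$ by a conjugate of the form $L^{q_k}R^{p_k}\cdots L^{q_1}R^{p_1}$ — this conjugates $f_A$ and $f_{-A}$ simultaneously and affects neither conclusion, both being invariant under conjugating $A$ — so that Theorem~\ref{theorem-2} applies to $f_A$. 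Let $(\mathcal{L}^{\tt s},\nu^{\tt s})$ be the stable measured lamination of $f_A$; by Theorem~\ref{theorem-2}(1) it is carried by $(\tau_0,\mu_0)=(\tau_{[\frac01,\frac10]},\vs)$, which is $f_{-\I}$-invariant by the refinement above. Hence $f_{-\I}$ fixes $(\mathcal{L}^{\tt s},\nu^{\tt s})$, and since $f_{-A}=f_{-\I}\circ f_A$ we get $f_{-A}(\mathcal{L}^{\tt s},\nu^{\tt s})=f_{-\I}(\mathcal{L}^{\tt s},\lambda\nu^{\tt s})=(\mathcal{L}^{\tt s},\lambda\nu^{\tt s})$, so $(\mathcal{L}^{\tt s},\nu^{\tt s})$ is also the stable measured lamination of $f_{-A}$ and $(\tau_0,\mu_0)$ is suited to it.

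Finally, the maximal splitting sequence starting at $(\tau_0,\mu_0)$ depends only on the measured train track, not on the map, so it is exactly the sequence $(\tau_0,\mu_0)\ls^{q_k}\rs^{p_k}\cdots\ls^{q_1}\rs^{p_1}(\tau_\ell,\mu_\ell)\rightharpoonup\cdots$ produced in Theorem~\ref{theorem-2}, in which $\tau_\ell=\tau_{[\frac ba,\frac dc]}$ with $\abcd=A$ and $(\tau_\ell,\mu_\ell)=f_A(\tau_0,\lambda^{-1}\mu_0)$. Applying $f_{-\I}$ to the last identity and invoking the measured refinement of Proposition~\ref{propA1}(2) once more,
\[
f_{-A}(\tau_0,\lambda^{-1}\mu_0)=f_{-\I}\!\left(f_A(\tau_0,\lambda^{-1}\mu_0)\right)=f_{-\I}(\tau_\ell,\mu_\ell)=(\tau_\ell,\mu_\ell),
\]
and then Lemma~\ref{lemma:0A} propagates this to $(\tau_{\ell+i},\mu_{\ell+i})=f_{-A}(\tau_i,\lambda^{-1}\mu_i)$ for all $i\ge0$. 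Thus $(\tau_0,\mu_0)\rightharpoonup^\ell(\tau_\ell,\mu_\ell)$ is a length $\ell$ Agol cycle of $f_{-A}$ — literally the same finite sequence of measured train tracks as the Agol cycle of $f_A$ furnished by Theorem~\ref{theorem-2} — so the two Agol cycles are equivalent (take $h=\mathrm{id}$, $p=p'=0$, $c=1$ in Definition~\ref{definition_ combinatorially-isomorphic}). The only step that is not pure bookkeeping is the measured refinement of Proposition~\ref{propA1}(2), i.e.\ checking that $f_{-\I}$ fixes the branch weights and not merely the underlying train track; everything else follows from Lemma~\ref{lemma:0A}, the map-independence of the maximal splitting, and the identity $-A=(-\I)A$.
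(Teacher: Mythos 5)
Your proof is correct and follows essentially the same route as the paper: non-conjugacy via the trace, and the identity $f_{-A}(\tau_0,\lambda^{-1}\mu_0)=f_{-\I}\circ f_A(\tau_0,\lambda^{-1}\mu_0)=(\tau_\ell,\mu_\ell)$ using the $f_{-\I}$-invariance of the measured train tracks $(\tau_{[\frac{b}{a},\frac{d}{c}]},\vxy)$, which shows the splitting sequence of Theorem~\ref{theorem-2} is simultaneously an Agol cycle of $f_{-A}$. Your added justifications (the measured refinement of Proposition~\ref{propA1}(2), the reduction of $A$ to the canonical $L^{q_k}R^{p_k}\cdots L^{q_1}R^{p_1}$ form, and the suitedness check for $f_{-A}$) are details the paper leaves implicit, not a different argument.
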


\begin{proof} 
We note that $f_{-A}$ is pseudo-Anosov  with dilatation equal to that of $f_A$.
Since $\tr(A) =-\tr(-A) \ne \tr(-A)$, $f_{-A}$ is not conjugate to $f_A$.  
Let 
\begin{equation}\label{eq:agol_cycle1}
(\tau_0, \mu_0)
\ls^{q_k} \ 
\rs^{p_k} \ 
\cdots \ 
\ls^{q_1} \ 
\rs^{p_1}
(\tau_\ell, \mu_\ell) = f_A(\tau_0, \lambda^{-1}\mu_0)
\end{equation}
be the Agol cycle of $f_A$ as in Theorem~\ref{theorem-2}. 
Since $(\tau_{[\frac{b}{a}, \frac{d}{c}]}, \vxy)$ is preserved by $f_{-\I}$ 
we have 
$$(\tau_\ell, \mu_\ell) = f_{-\I}(\tau_\ell, \mu_\ell) = f_{-\I} \circ f_A(\tau_0, \lambda^{-1}\mu_0) = f_{-A} (\tau_0, \lambda^{-1}\mu_0).$$ 
Thus  (\ref{eq:agol_cycle1}) 
%$$(\tau_0, \mu_0)
%\ls^{q_k} \ 
%\rs^{p_k} \ 
%\cdots \ 
%\ls^{q_1} \ 
%\rs^{p_1}
%(\tau_\ell, \mu_\ell) = f_{-A}(\tau_0, \lambda^{-1}\mu_0)
%$$
is also an Agol cycle of $f_{-A}$. 
\end{proof}

%So far, we have been studying the case where $s$ is a quadratic irrational. 
%Next we investigate the case where $s$ is a rational number. 
Mosher briefly explain the following result in \cite{Mosher03}.

\begin{proposition}\label{prop:A}
Let $s=\frac{b}{a} \in \Q \cap (0, \infty)$ be a rational number with $\gcd(a, b)=1$ 
which admits a finite continued fraction expansion 
$s=[n_0: n_1, \cdots, n_k],$
where $n_0$ (resp. $n_i$ for $i= 1, \dots, k$) is a non-negative (resp. positive) integer.
Let $\ell=n_0 + n_1 + \dots + n_k$. 
Starting with the measured train track $(\tau_0, \left(\begin{smallmatrix} a\\ b \end{smallmatrix}\right))$ (see Figure~\ref{fig_base}-(3)), 
we have a maximal splitting sequence of length $\ell$ ending with the simple closed curve $\gamma_{\frac{b}{a}}$: 
\begin{itemize}
\item
$(\tau_0, \left(\begin{smallmatrix} a\\ b \end{smallmatrix}\right))
\ls^{n_0}\ \
\rs^{n_1}\ \
\cdots \ \
\ls^{n_k}\ \
(\tau_\ell, \left(\begin{smallmatrix} 1 \\ 0 \end{smallmatrix}\right)) =
\gamma_{\frac{b}{a}}$ 
when $k$ is even. 
More precisely 
$\tau_\ell=\tau_{[\frac{b}{a}, \frac{\delta}{\gamma}]}$ 
with 
$\left(\begin{smallmatrix} a & \gamma \\ b & \delta \end{smallmatrix}\right) = L^{n_0} R^{n_1} \cdots L^{n_k}$.

\item
$(\tau_0, \left(\begin{smallmatrix} a\\ b \end{smallmatrix}\right))
\ls^{n_0}\ \
\rs^{n_1}\ \
\cdots \ \
\rs^{n_k}\ \
(\tau_\ell, \left(\begin{smallmatrix} 0 \\ 1 \end{smallmatrix}\right)) =
\gamma_{\frac{b}{a}}$
when $k$ is odd.
More precisely 
$\tau_\ell =\tau_{[\frac{\beta}{\alpha}, \frac{b}{a}]}$ 
with $\left(\begin{smallmatrix} \alpha & a \\ \beta & b \end{smallmatrix}\right) = L^{n_0} R^{n_1} \cdots R^{n_k}$. 
\end{itemize}
If $n_0=0$, 
equivalently $ 0 < s=\frac{b}{a} \le 1$, 
the symbol
$\ls^{0}$ is removed from the above splitting sequence and the splitting starts with $\rs^{n_1}$. 
\end{proposition}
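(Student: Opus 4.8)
The plan is to run the Euclidean algorithm underlying the continued fraction $[n_0:n_1,\dots,n_k]$ directly on the measured train track, applying Lemma~\ref{lemma-A} once per partial quotient while keeping track, in parallel, of the train track (through its corresponding matrix, cf.\ (\ref{eq:left-split}) and (\ref{eq:right-split})) and of the weight vector. I take $[n_0:n_1,\dots,n_k]$ to be the expansion whose partial quotients are the successive quotients of the Euclidean algorithm applied to $(b,a)$: setting $r_{-1}=b$, $r_0=a$ and $r_{j-1}=n_j r_j+r_{j+1}$ with $0\le r_{j+1}<r_j$ for $0\le j\le k$, one has $r_k=\gcd(a,b)=1$ and $r_{k+1}=0$. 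I will carry out the case $k$ even in detail and indicate the symmetric modifications for $k$ odd; the case $n_0=0$, i.e.\ $0<s\le 1$, is the same computation with the empty initial block $\ls^{n_0}$ deleted.

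First I would apply Lemma~\ref{lemma-A}-(1) to $(\tau_0,\binom{a}{b})$ with quotient $n_0$ (legitimate since $b=n_0a+r_1$), obtaining $n_0$ consecutive left splittings ending at weight vector $L^{-n_0}\binom{a}{b}=\binom{a}{r_1}$ on the train track with corresponding matrix $L^{n_0}$ (the corresponding matrix of $\tau_0=\tz$ being the identity), by (\ref{eq:left-split}). If $k\ge 1$ then $r_1\ne 0$, so by Lemma~\ref{lemma-A}-(1) this train track admits no further left splitting and is in the situation of Lemma~\ref{lemma-A}-(2); applying the latter with quotient $n_1$ (legitimate since $a=n_1r_1+r_2$) gives $n_1$ right splittings, ending at weight vector $R^{-n_1}\binom{a}{r_1}=\binom{r_2}{r_1}$ on the train track with corresponding matrix $L^{n_0}R^{n_1}$, by (\ref{eq:right-split}). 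Iterating, the $i$-th block ($0\le i\le k$) consists of $n_i$ splittings, left if $i$ is even and right if $i$ is odd --- the alternation being forced by the ``falls into Case (1)/(2)'' clauses of Lemma~\ref{lemma-A} --- it right-multiplies the corresponding matrix by $L^{n_i}$ (resp.\ $R^{n_i}$), and the weight vectors run through $\binom{a}{r_1},\binom{r_2}{r_1},\binom{r_2}{r_3},\dots$ exactly as in the Euclidean algorithm. The number of splittings is $n_0+\dots+n_k=\ell$. As $k$ is even the last block is a left block; entering it the weight vector is $\binom{1}{r_{k-1}}=\binom{1}{n_k}$ (using $r_k=1$ and $r_{k-1}=n_kr_k+r_{k+1}=n_k$), and Lemma~\ref{lemma-A}-(1) with quotient $n_k$ and remainder $r_{k+1}=0$ shows this block ends at $(\tau_\ell,\binom{1}{0})$, where $\tau_\ell=\tau_{[\frac{b'}{a'},\frac{\delta}{\gamma}]}$ has corresponding matrix $M:=L^{n_0}R^{n_1}\cdots L^{n_k}=\left(\begin{smallmatrix}a' & \gamma \\ b' & \delta\end{smallmatrix}\right)$.

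It then remains to identify the terminal object and pin down its slope. By Remark~\ref{remark:scc} (with Convention~\ref{def:measure convention}), $(\tau_\ell,\binom{1}{0})$ is the simple closed curve $\gamma_{\frac{b'}{a'}}$, whose slope $\frac{b'}{a'}$ is the left endpoint of the Farey interval of $\tau_\ell$. To see $\frac{b'}{a'}=s=\frac{b}{a}$, I observe that the cumulative effect of the blocks $0,\dots,k$ on the weight vector is left multiplication by the inverse $M^{-1}$ of $M=L^{n_0}R^{n_1}\cdots L^{n_k}$ (being the composite of the block factors $L^{-n_0},R^{-n_1},\dots,L^{-n_k}$); since this sends $\binom{a}{b}$ to $\binom{1}{0}$, we get $\binom{a}{b}=M\binom{1}{0}$, the first column of $M$, hence $a'=a$ and $b'=b$. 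Thus $\tau_\ell=\tau_{[\frac{b}{a},\frac{\delta}{\gamma}]}$ with $\left(\begin{smallmatrix}a & \gamma \\ b & \delta\end{smallmatrix}\right)=L^{n_0}R^{n_1}\cdots L^{n_k}$ and the splitting sequence ends at $\gamma_{\frac{b}{a}}$, as claimed. For $k$ odd the argument is identical with $L$ and $R$ (and ``left'' and ``right'', and ``left endpoint'' and ``right endpoint'') interchanged: the last block is a block of $n_k$ right splittings, the weight vector terminates at $\binom{0}{1}$, Remark~\ref{remark:scc} identifies $(\tau_\ell,\binom{0}{1})$ with $\gamma_{\frac{d'}{c'}}$ where $\frac{d'}{c'}$ is the right endpoint of the Farey interval, and $\binom{a}{b}=M'\binom{0}{1}$ is the second column of $M'=L^{n_0}R^{n_1}\cdots R^{n_k}$, so that endpoint equals $\frac{b}{a}$.

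I do not expect a genuine obstacle: the proof is just the Euclidean algorithm run on a train track, and Lemma~\ref{lemma-A} already packages both the number of splittings per block and the alternation of their type. The only points needing care are the two parallel $2\times 2$ bookkeepings --- the corresponding matrix of the train track, accumulated by right multiplication, against the weight vector, transformed by the inverse of that matrix --- so that the terminal slope comes out to be exactly $\frac{b}{a}$; and the boundary cases $n_0=0$ (one starts from $\rs^{n_1}$) and $k=0$ (a single block of $n_0$ left splittings carrying $(\tau_0,\binom{1}{n_0})$ to $(\tau_{[\frac{n_0}{1},\frac{1}{0}]},\binom{1}{0})=\gamma_{\frac{n_0}{1}}$), which are instances of the same computation with empty blocks.
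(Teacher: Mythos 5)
Your proof is correct and is exactly the argument the paper has in mind: the paper omits the proof of Proposition~\ref{prop:A} as ``similar to that of Theorem~\ref{theorem-2}'', and your block-by-block application of Lemma~\ref{lemma-A} along the Euclidean algorithm, with the parallel bookkeeping of the corresponding matrix (right-multiplied by $L^{n_i}$ or $R^{n_i}$) against the weight vector (left-multiplied by the inverse), is precisely that argument, including the correct identification $\binom{a}{b}=M\binom{1}{0}$ of the terminal slope. The only point worth flagging is that the final splitting of the last block occurs at weight $x=y$, which is the degenerate ``extended maximal splitting'' the paper explicitly allows in the remark following the proposition.
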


The last splittings $\ls^{n_k}$ and $\rs^{n_k}$ in Proposition~\ref{prop:A} are extended maximal splittings; namely, 
they are maximal splittings in the sense of Definition~\ref{definition:3operations}-(1) where $x=z$ and $y=w$. 
In the extended case, the results of left and right splittings are the same if the small branch with the weight $0$ is ignored. 
See Figure~\ref{fig_split_shift}-(1). 
For the last measured train track which gives $\gamma_{\frac{b}{a}}$, see Remark~\ref{remark:scc}. 
The proof of Proposition~\ref{prop:A} is omitted as it is similar to that of Theorem~\ref{theorem-2}.

\begin{example}
We apply  Proposition~\ref{prop:A} to $s = \frac{10}{7}= [1: 2,3]$. 
See also Example~\ref{example:splitting-sequence}. 
By Lemma~\ref{lemma-A} we have 
$$
(\tau_{[\frac{0}{1}, \frac{1}{0}]}, \left(\begin{smallmatrix} 7\\ 10 \end{smallmatrix}\right)) \ls 
( \tau_{[\frac{1}{1}, \frac{1}{0}]}, \left(\begin{smallmatrix} 7\\ 3 \end{smallmatrix}\right)) \rs^2
( \tau_{[\frac{1}{1}, \frac{3}{2}]} , \left(\begin{smallmatrix} 1\\ 3 \end{smallmatrix}\right)) \ls^3 
(\tau_{[\frac{10}{7}, \frac{3}{2}]}, \left(\begin{smallmatrix} 1\\ 0 \end{smallmatrix}\right))= \gamma_{\frac{10}{7}}.
$$
\end{example}

\section{Applications}\label{sec:applications}

\subsection{The braid groups and related mapping class groups}
\label{subsection_braid-mappingclass}

Let $B_n$ be the group of $n$-braids, which has the well-known presentation 
with generators $\sigma_1, \dots, \sigma_{n-1}$ and the relations 
$\sigma_i \sigma_j= \sigma_j \sigma_i$ if $|i-j|>1$ and 
$\sigma_j \sigma_i \sigma_j = \sigma_i \sigma_j \sigma_i$ if $|i-j|=1$.

Let $D_n$ be a disk with $n$ punctures $a_1, \dots,a_n$.  
We denote by $\MCG(D_n, \partial D)$, 
the mapping class group of  homeomorphisms which fix the boundary $\partial D$ of the $n$-punctured disk pointwise. 
There is a well-known isomorphism 
$B_n \rightarrow \MCG(D_n, \partial D)$ 
sending $\sigma_i$ to a positive half twist $h_i$ which interchanges the punctures $a_i$ and $a_{i+1}$ counterclockwise. 
Under the identification $B_n = \MCG(D_n, \partial D)$ 
a braid word $b_1 b_2 \in B_n$ (read from left to right) is identified 
with the composition $b_2 \circ b_1 \in \MCG(D_n, \partial D)$ (read from right to left) of 
mapping classes.

Capping the boundary of $D_n$ with a once-punctured disk yields an $n+1$-punctured sphere $\Sigma_{0,n+1}$. 
The $n+1$ punctures of $\Sigma_{0,n+1}$  come from the punctures $a_1, \dots, a_{n}$ of $D_n$ and the puncture, say $a_\infty$, of the once-punctured capping disk. 
Let 
$ \MCG(D_n, \partial D) \stackrel{Cap}{\to}  \MCG(\Sigma_{0,n+1}, \{a_\infty\})$ denote the induced homomorphism 
where $\MCG(\Sigma_{0,n+1}, \{a_\infty\})$ denotes the subgroup of $\MCG(\Sigma_{0, n+1})$ consisting of elements that fix the puncture $a_\infty$. 
The following sequence is exact:
$$1 \to \langle \Delta^2 \rangle \to B_n \simeq \MCG(D_n, \partial D) \stackrel{Cap}{\to}  \MCG(\Sigma_{0,n+1}, \{a_\infty\}) \to 1,
$$
where $\Delta   \in B_n$ is a positive half twist and the infinite cyclic group 
$\langle \Delta^2 \rangle$ generated by the positive full twist $\Delta^2$ gives the center of $B_n$. 
Composing with the injective inclusion $\MCG(\Sigma_{0,n+1}, \{a_\infty\}) \hookrightarrow \MCG(\Sigma_{0,n+1})$ we obtain a homomorphism (strictly speaking anti-homomorphism) 
%%
%%ハイフンをつけて anti-homomorphism とすることもいくつかの文献ではあるようです. ハイフンがある方がわかりやすいと思い, 入れてみました. 
%%
$$\Gamma: B_n \to \MCG(\Sigma_{0,n+1}).$$
A braid $b\in B_n$ is {\em pseudo-Anosov} if 
$\Gamma(b)$ is represented by a pseudo-Anosov map. 
The dilatation $\lambda(b)>1$ of the pseudo-Anosov braid $b$ 
is defined to be the dilatation of $\Gamma(b)$.

Abusing the notation, 
the image of the braid element $\s_i \in B_n$ 
via  $\Gamma: B_n \to \MCG(\Sigma_{0,n+1})$ is also denoted by $\s_i$. 
When $n=3$, 
$\s_1\in \MCG(\Sigma_{0, 4})$ fixes the punctures $a_\infty$ and $a_ 3$ and interchanges the punctures $a_1$ and $a_2$ counterclockwise. 
Similarly, $\sigma_2^{-1} \in \MCG(\Sigma_{0, 4})$ fixes $a_\infty$ and $a_1$ and interchanges $a_2$ and $a_3$ clockwise. 
See Figure~\ref{fig_action3braid}. 
%%%%%%
\begin{figure}[htbp]
\begin{center}
\includegraphics[height=3cm]{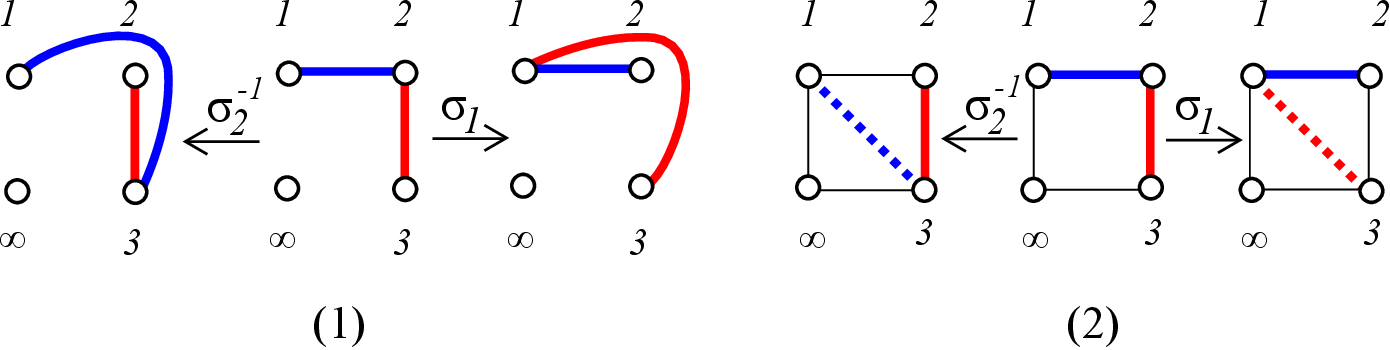}
\caption{
(1), (2) $\sigma_1, \sigma_2^{-1}: \Sigma_{0,4} \rightarrow \Sigma_{0,4}$. 
(Labels $1, 2, 3$ and $\infty$ represent punctures $a_1, a_2, a_3$ and $a_{\infty}$.) 
In (2) we regard $\Sigma_{0,4}$ as a square pillowcase with the corners removed.}
\label{fig_action3braid} 
\end{center}
\end{figure}
%%%%%%%

\subsection{Agol cycles of pseudo-Anosov $3$-braids}

To compute Agol cycles of pseudo-Anosov $3$-braids, 
we use the measured train track  $(\omega_0, \vxy)$  in  $\Sigma_{0,4}$ defined in Figure~\ref{fig_switch-omega}-(3). 
Notice that $(\omega_0, \left(\begin{smallmatrix} x \\ y \end{smallmatrix} \right))$ 
is triply weighted in the sense that the weights of the six branches are either $x, y$ or $x+y$. 
The measure is represented by the column vector 
$\left(\begin{smallmatrix} x \\ y \end{smallmatrix} \right)$. 
There are two large branches with the same weight $x+y$, 
and others are small branches.

For the proof of Theorem~\ref{theorem-braid-intro} 
it is enough to prove the following result. 

\begin{theorem}\label{theorem-braid}
Let $A = L^{q_k} R^{p_k} \cdots L^{q_1} R^{p_1}  \in \SL(2; \Z)$ 
where $p_1, q_1, \cdots, p_k, q_k$ and $k$ are positive integers.
Let $\ell= p_1 + q_1 + \cdots + p_k + q_k$ and $s$ be the slope of the eigenvectors with respect to the expanding eigenvalue $\lambda>1$ of $A$. 
For the pseudo-Anosov map
$$\phi_A = 
\Gamma(\s_1^{p_1} \s_2^{-q_1} \cdots \s_1^{p_k} \s_2^{-q_k})=
%%% 3-braid と言っているのでbraid word \s_1^{p_1} \s_2^{-q_1} \cdots \s_1^{p_k} \s_2^{-q_k} も入れたほうが良いと思って加えました。
%%%%%%%%%%%%%
\s_2^{-q_k} \circ \s_1^{p_k} \circ \cdots \circ \s_2^{-q_1} \circ \s_1^{p_1}: \Sigma_{0,4} \rightarrow \Sigma_{0,4}$$ 
we have the following. 
\begin{enumerate}
\item%[(1)]
The measured train track 
$(\omega_0, \nu_0):= (\omega_0, \left(\begin{smallmatrix} 1\\ s \end{smallmatrix}\right))$ (see Figure~\ref{fig_switch-omega}-(3)) 
%\marginal{I've suggested to change $\omega$ in Fig~\ref{fig_switch-omega} to $\omega_0$.}
is suited to the stable measured lamination of $\phi_A$. 

\item%[(2)] 
Starting with the measured train track 
$ (\omega_0, \nu_0)$, 
the first $\ell+1$ terms 
$$(\omega_0, \nu_0)
\ls^{q_k} \ 
\rs^{p_k} \ 
\cdots \ 
\ls^{q_1} \ 
\rs^{p_1}
(\omega_\ell, \nu_\ell)
$$
of the maximal splitting sequence satisfies
$(\omega_{\ell}, \nu_{\ell})= \phi_A(\omega_0, \lambda^{-1} \nu_0)$. 
Thus, it forms a length $\ell$ Agol cycle of $\phi_A$. 
%Moreover 
%$(\omega_i, \nu_i) = \pi \circ \rho^{-1} (\tau_i, \mu_i)$, 
%where 
%$(\tau_i, \mu_i)$ is the measured train track in $\Sigma_{1,1}$ which appears in the Agol cycle of the pseudo-Anosov map 
%$f_A: \Sigma_{1,1} \rightarrow \Sigma_{1,1}$ as in Theorem~\ref{theorem-2}-(2). 
%\marginal{In the proof, say $\tau_i$ is invariant under hyperelliptic involution}
\end{enumerate}
\end{theorem}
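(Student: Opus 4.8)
The plan is to transcribe the proof of Theorem~\ref{theorem-2} from the once-punctured torus to the four-punctured sphere, replacing the family $\tau_i$ by the family $\omega_i$ and the map $f_A$ by $\phi_A$. First I would set up the family $\{\omega_i\}$ in $\Sigma_{0,4}$: define $\omega_i$ inductively by maximal splittings starting from $\omega_0$, exactly as $\tau_i$ was defined from $\tau_0$ in Section~\ref{sec:2-1}, and fix the measure convention that the two large branches of $\omega_i$ carry the common weight $x+y$ while the small branches carry $x$ or $y$, so the measure is recorded by the vector $\vxy$ as in Figure~\ref{fig_switch-omega}-(3). The geometric input here is that $\omega_i$ is built from two copies of $\tau_i$ placed in $\Sigma_{0,4}$, so every weight computation on $\omega_i$ is a doubled copy of the corresponding switch-condition computation on $\tau_i$. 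Statement (1) then follows from Theorem~\ref{theorem-2}-(1) by passing to the hyperelliptic branched double cover, or directly: the Bestvina--Handel algorithm applied to $\phi_A$ outputs $(\omega_0, \vs)$ up to scaling, and $\omega_0$ is trivalent.

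The two facts that carry all the new content are the $\Sigma_{0,4}$-analogues of Lemma~\ref{lem:LandR} and Lemma~\ref{lemma:basic}. For the first, I would check by an explicit picture--chase in the pillowcase model of Figure~\ref{fig_switch-omega}-(3), using the braid actions of Figure~\ref{fig_action3braid}, that a maximal right splitting of $\omega_0$ realizes $\s_1$ and a maximal left splitting realizes $\s_2^{-1}$, i.e. $\rs(\omega_0) = \s_1(\omega_0)$ and $\ls(\omega_0) = \s_2^{-1}(\omega_0)$, and more generally that $\{\omega_i\}$ is closed under each maximal splitting. Since $\s_1$ and $\s_2^{-1}$ are orientation-preserving, Lemma~\ref{lemma:0A} applies verbatim, and repeating the argument of Proposition~\ref{propA} gives $\rs^{p}(\omega_0) = \s_1^{p}(\omega_0)$, $\ls^{q}(\omega_0) = \s_2^{-q}(\omega_0)$, and $(\rs^{p_1}\circ\ls^{q_1}\circ\cdots\circ\rs^{p_k}\circ\ls^{q_k})(\omega_0) = \phi_A(\omega_0)$ at the level of unmeasured train tracks. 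For the second fact, I would verify: if $x<y$ then a maximal left splitting of $(\omega_i,\vxy)$ -- which splits both large branches simultaneously, both of left type -- produces the next member of the family with measure $L^{-1}\vxy = \left(\begin{smallmatrix} x \\ y-x \end{smallmatrix}\right)$, and symmetrically with $R^{-1}$ when $x>y$. Because both large branches carry the common weight $x+y$ and the two-copies structure makes the local bookkeeping identical to the torus computation in Lemma~\ref{lemma:basic}, this reduces to the same linear algebra. Iterating yields the $\omega$-analogues of Lemma~\ref{lemma-A} ($q$ consecutive left splittings when $y = qx+r$, with measure transforming by $L^{-q}$) and of Corollary~\ref{cor:tau-consequtive}.

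With these analogues in hand the computation is word-for-word the proof of Theorem~\ref{theorem-2}. I would reuse the Euclidean-algorithm identities (\ref{eq:s1})--(\ref{eq:s2k}) from the proof of Proposition~\ref{prop:s}, where $s=[\ol{q_k:p_k,q_{k-1},p_{k-1},\dots,q_1,p_1}]$ and $0<r_0<s_0<\dots<r_{k-1}<s_{k-1}<1<s$. Since $s = q_k\cdot 1 + s_{k-1}$, the $\omega$-analogue of Lemma~\ref{lemma-A}-(1) and Corollary~\ref{cor:tau-consequtive}-(1) give $(\omega_0,\vs)\ls^{q_k}\ \s_2^{-q_k}(\omega_0, L^{-q_k}\vs)$; since $1 = p_k s_{k-1} + r_{k-1}$, the $\omega$-analogue of Lemma~\ref{lemma-A}-(2) allows $p_k$ right splittings, and Lemma~\ref{lemma:0A} pushes $\s_2^{-q_k}$ past them to give $\s_2^{-q_k}\circ\s_1^{p_k}(\omega_0, R^{-p_k}L^{-q_k}\vs)$. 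Repeating $k-1$ more times reaches $(\omega_\ell,\nu_\ell) = (\s_2^{-q_k}\circ\s_1^{p_k}\circ\cdots\circ\s_2^{-q_1}\circ\s_1^{p_1})(\omega_0,\, R^{-p_1}L^{-q_1}\cdots R^{-p_k}L^{-q_k}\vs) = \phi_A(\omega_0, A^{-1}\vs) = \phi_A(\omega_0, \lambda^{-1}\vs)$, using $A\vs = \lambda\vs$. By the definition following Theorem~\ref{thm:Agol} this is a length-$\ell$ Agol cycle of $\phi_A$, which is Statement (2); together with Proposition~\ref{prop:s} this also yields Theorem~\ref{theorem-braid-intro}.

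The main obstacle is exactly the two base-case verifications in $\Sigma_{0,4}$ -- identifying a single maximal splitting of $\omega_0$ with the mapping classes $\s_1$ and $\s_2^{-1}$, and checking that a maximal splitting of the doubly-weighted $\omega_i$ splits both large branches with the same handedness, lands again in the family, and transforms the weight vector by $L^{-1}$ or $R^{-1}$. These are elementary but delicate picture-chases; the one genuinely new subtlety relative to the torus case is that each maximal splitting performs two simultaneous splittings, so one must confirm they are consistently of left (resp. right) type and that the measure convention of Figure~\ref{fig_switch-omega}-(3) is preserved. A clean alternative that sidesteps the braid-action identification is to work on the hyperelliptic branched double cover $\Sigma \to \Sigma_{0,4}$ by a punctured torus, under which $\phi_A$ lifts to $f_A$, the pulled-back train track of $\omega_i$ is $\tau_i$ (the two sheets being the ``two copies''), and splittings are equivariant, so Theorem~\ref{theorem-braid} descends from Theorem~\ref{theorem-2}; there the obstacle shifts to verifying the Birman--Hilden-type lifting statement $\phi_A\leftrightarrow f_A$ for this particular braid word and matching the two measured laminations.
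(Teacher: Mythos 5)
Your proposal is correct, and the ``clean alternative'' you mention at the end is in fact the paper's official proof: the paper routes everything through the intermediate surface $\Sigma_{1,4}$, using the $4$-fold covering $\rho:\Sigma_{1,4}\to\Sigma_{1,1}$ and the double covering $\pi:\Sigma_{1,4}\to\Sigma_{0,4}$ induced by the involution $g_{-\I}$, with $\mathcal{T}_0=\rho^{-1}(\tau_0)=\pi^{-1}(\omega_0)$ and the commutation relations $\pi\circ g_L=\s_2^{-1}\circ\pi$, $\pi\circ g_R=\s_1\circ\pi$ of Proposition~\ref{prop:pi}; Theorem~\ref{theorem-2} is then transported down via the commutative diagrams of Proposition~\ref{pro:A}. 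Your primary route---redoing the whole argument directly in $\Sigma_{0,4}$---also works, and in fact the paper does carry out the base-case picture-chase $\omega_0\ls\s_2^{-1}(\omega_0)$ and $\omega_0\rs\s_1(\omega_0)$ (Figure~\ref{fig_3braid_lrmap}) inside the proof of Proposition~\ref{pro:A}. What the covering-space organization buys is precisely the two points you flag as delicate: (i) statement (1) follows at once because $g_A$ is a pull-back of both $f_A$ and $\phi_A$, so suitedness of $(\tau_0,\vs)$ descends to $(\omega_0,\vs)=\pi\circ\rho^{-1}(\tau_0,\vs)$ without rerunning Bestvina--Handel; and (ii) the claim that the two large branches of $\omega_i$ carry equal weight, split simultaneously with the same handedness, and land back in the family with measure transformed by $L^{-1}$ or $R^{-1}$ becomes automatic from equivariance of splittings under the deck transformations together with the $f_{-\I}$-invariance of $\tau_i$ (Proposition~\ref{propA1}-(2)), which is exactly what makes $\omega_i=\pi\circ\rho^{-1}(\tau_i)$ well defined. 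The Euclidean-algorithm computation (\ref{eq:s1})--(\ref{eq:s2k}) and the concluding identity $\nu_\ell=A^{-1}\vs=\lambda^{-1}\vs$ are identical in both treatments, so either organization yields the theorem.
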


The rest of this section is devoted to prove Theorem~\ref{theorem-braid}. %As が多いので文章変えました。
In the proof we will see that the above measured train track $(\omega_i, \nu_i)$ is equal to $\pi \circ \rho^{-1} (\tau_i, \mu_i)$, where 
$\pi: \Sigma_{1,4} \rightarrow \Sigma_{0,4}$ is a double covering, 
$\rho: \Sigma_{1,4} \rightarrow \Sigma_{1,1}$ is a $4$-fold covering, and 
$(\tau_i, \mu_i)$ is the measured train track in $\Sigma_{1,1}$ %which appears in the Agol cycle of the pseudo-Anosov map 
for $f_A: \Sigma_{1,1} \rightarrow \Sigma_{1,1}$ that appears in Theorem~\ref{theorem-2}-(2).
As a result, each $(\omega_i, \nu_i)$ has exactly two large branches whose weights are the same, and is triply weighted. 
Importance of triply weighted train tracks in $\Sigma_{0,4}$ in the study of Agol cycles was first pointed by Aceves and Kawamuro  \cite{AcevesKawamuro23}.

We start with taking four points 
$a_\infty:=(0, 0)$,  
$a_1:=(0, 1/2)$, 
$a_2:=(1/2, 1/2)$, and 
$a_3:=(1/2, 0)$ in $ \T= \R^2/\Z^2$. 
The homeomorphism $\f_A : \T \to  \T$ induced by $A \in \SL(2; \Z)$ 
fixes $a_{\infty}$ and permutes the three points $a_1, a_2$ and $a_3$. 
The involution $\f_ {-\I}: \T \to  \T$ induced by $-\I= \left(\begin{smallmatrix} -1 & 0 \\ 0 & -1 \end{smallmatrix}\right)$ 
fixes the four points $a_\infty, a_1, a_2, a_3$. 
For $A \in \SL(2; \Z)$ let 
$$g_A : \Sigma_{1, 4} \to \Sigma_{1, 4}; \vxy \mapsto A\vxy$$ 
be the restriction of $\f_A$ to the subspace $\Sigma_{1, 4}=\T \setminus \{a_\infty, a_1, a_2, a_3\}$, the $4$-punctured torus. 
In particular, $g_{-\I}:\Sigma_{1, 4} \to \Sigma_{1, 4}$ is an involution. 
%We set $G=\langle g_{-\I}\rangle \simeq \Z_2$ the group generated by the involution $g_{-\I}:\Sigma_{1, 4} \to \Sigma_{1, 4}$. 
The action $\Z_2 \ni 1 \mapsto g_{-\I} \in {\rm Homeo}_+(\Sigma_{1, 4})$ induces a double covering map 
$$\pi: \Sigma_{1, 4} \to \Sigma_{1, 4}/\Z_2 = \Sigma_{0, 4}.$$

\begin{proposition}\label{prop:pi}
Let $g_L, g_R: \Sigma_{1,4} \rightarrow \Sigma_{1,4}$ be the maps induced by $L, R \in  \SL(2; \Z)$, respectively. 
%For the double covering map $\pi: \Sigma_{1,4} \rightarrow \Sigma_{0,4}$, 
Then $\s_1,  \s_2^{-1}: \Sigma_{0,4} \rightarrow \Sigma_{0,4}$ %are lift to $g_L,  g_R $, respectively
%\marginal{Question: is this correct to write ``are lifts to ..."?}
%under the double covering map $\pi: \Sigma_{1,4} \rightarrow \Sigma_{0,4}$. 
satisfy 
\begin{eqnarray*}
\pi \circ g_L &=& \s_2^{-1} \circ \pi, \\
\pi \circ g_R &=& \s_1 \circ \pi.
\end{eqnarray*}
\end{proposition}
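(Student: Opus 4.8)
The plan is in two steps: first show that $g_L$ and $g_R$ descend through the quotient map $\pi$, then identify the descended homeomorphisms with $\s_2^{-1}$ and $\s_1$. For the descent, note that $-\I$ is central in $\SL(2;\Z)$, so $L(-\I)=(-\I)L$ and $R(-\I)=(-\I)R$, which gives $g_L\circ g_{-\I}=g_{-\I}\circ g_L$ and $g_R\circ g_{-\I}=g_{-\I}\circ g_R$ on $\Sigma_{1,4}$. Since $\pi$ is exactly the quotient by the $\Z_2$-action generated by $g_{-\I}$, each of $g_L,g_R$ therefore induces a homeomorphism $\bar g_L,\bar g_R\colon\Sigma_{0,4}\to\Sigma_{0,4}$ with $\pi\circ g_L=\bar g_L\circ\pi$ and $\pi\circ g_R=\bar g_R\circ\pi$. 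It then remains to prove $\bar g_L=\s_2^{-1}$ and $\bar g_R=\s_1$ in $\MCG(\Sigma_{0,4})$.

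For that, I would first record the permutations of the four punctures: evaluating the linear maps shows $g_L$ fixes $a_\infty=(0,0)$ and $a_1=(0,\tfrac12)$ and interchanges $a_2=(\tfrac12,\tfrac12)$ with $a_3=(\tfrac12,0)$, while $g_R$ fixes $a_\infty$ and $a_3$ and interchanges $a_1$ with $a_2$; these agree with the permutations of $\s_2^{-1}$ and $\s_1$ (Figure~\ref{fig_action3braid}). The real content is that $\bar g_L$ and $\bar g_R$ are precisely half-twists with the correct handedness, and I would verify this by passing to the concrete pillowcase model of $\Sigma_{0,4}$: take the fundamental domain $\{0\le x\le\tfrac12\}$ for the $g_{-\I}$-action, whose boundary circles $\{x=0\}$ and $\{x=\tfrac12\}$ are folded by the residual involution $y\mapsto -y$ onto the edge from $a_\infty$ to $a_1$ and the edge from $a_2$ to $a_3$ respectively (this is the square pillowcase of Figures~\ref{fig_switch-omega}-(3) and \ref{fig_action3braid}-(2)). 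On this domain the shear $g_L\colon\vxy\mapsto\left(\begin{smallmatrix}x\\x+y\end{smallmatrix}\right)$ is the identity along $\{x=0\}$ and rotates the circle $\{x=t\}$ through $t$ of a full turn, so the descended map is the identity near the $a_\infty a_1$-edge and, near the $a_2a_3$-edge, swaps $a_2$ and $a_3$ by what is visibly a half-twist (the total rotation across the domain is a half turn, which unwinds to a single half-twist against the fixed edge). Matching the direction the shear pushes (the $+y$-direction) against the convention of Figure~\ref{fig_action3braid} that $\s_2^{-1}$ is the clockwise half-twist yields $\bar g_L=\s_2^{-1}$; the identical argument, using instead the fundamental domain $\{0\le y\le\tfrac12\}$ and the shear $g_R\colon\vxy\mapsto\left(\begin{smallmatrix}x+y\\y\end{smallmatrix}\right)$, gives that $g_R$ descends to the counterclockwise half-twist interchanging $a_1$ and $a_2$, i.e. $\bar g_R=\s_1$.

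I expect the main obstacle to be the orientation bookkeeping in the second step: one must set up the pillowcase model, the directions of the shears $L$ and $R$, and the clockwise/counterclockwise conventions for $\s_1$ and $\s_2$ so that they are all consistently oriented, and confirm that the descended homeomorphisms carry no extra power of the Dehn twist about the curve enclosing the interchanged pair of punctures. This is cleanly settled by tracking the image of one auxiliary arc (for instance an arc from $a_\infty$ to $a_2$) under $g_L$ upstairs and projecting it to $\Sigma_{0,4}$, and is best displayed with a figure; everything else in the argument is routine.
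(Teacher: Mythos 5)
Your proposal is correct and follows essentially the same route as the paper, whose entire proof is a reference to a figure showing the explicit action of $g_L$, $g_R$ on the pillowcase model; your write-up (descent via centrality of $-\I$, the puncture permutations, and the "rotation by $t$ across the fundamental domain $\{0\le x\le\tfrac12\}$" analysis showing the descended map is a single half-twist with no extra Dehn twisting) is a faithful and more explicit spelling-out of what that figure encodes. The one point you leave to a final check --- matching the handedness of the shear against the clockwise/counterclockwise conventions of Figure~\ref{fig_action3braid} --- is exactly the content the paper also delegates to its figure, and your plan to settle it by tracking one arc is the right move.
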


%\begin{proposition}\label{prop:pi}
%Let $g_L$ and $g_R$ be elements of $\MCG(\Sigma_{1, 4})$ induced by $L$ and  $R \in \SL(2; \Z)$ respectively. 
%For the covering map $\pi: \Sigma_{1,4} \rightarrow \Sigma_{0,4}$, 
%elements 
%$\s_1$ and $ \s_2^{-1} \in \MCG(\Sigma_{0, 4})$ are lift to $g_L$ and $ g_R \in \MCG(\Sigma_{1, 4})$ respectively. 
%In other words, 
%\begin{eqnarray*}
%\pi \circ g_L &=& \s_2^{-1} \circ \pi, \\
%\pi \circ g_R &=& \s_1 \circ \pi.
%\end{eqnarray*}
%\end{proposition}

\begin{proof}
%\marginal{memo for eiko: book by Farb and Margalit \cite{FarbMargalit12}}
Recall that $g_L$, $g_R$ are restrictions of $\f_L$, $\f_R: \T \rightarrow \T$ 
to $\Sigma_ {1,4} \subset \Sigma_ {1,0}= \T$, respectively. 
The assertion follows from Figure~\ref{fig_g_map}. 
%(cf. Figure~\ref{fig_lrmap} for $f_L$, $f_R: \Sigma_{1,1} \rightarrow \Sigma_{1,1}$.
\end{proof}
%%%%%%
\begin{figure}[htbp]
\begin{center}
%\includegraphics*[width=110mm,bb=0 90 400 280]{FigI.pdf}
%\caption{Proof of Proposition~\ref{prop:pi}. 
%(Labels $1$, $2$, $3$ and $\infty$ represent punctures $a_1$, $a_2$, $a_3$ and $a_{\infty}$. 
%See also Figure~\ref{fig:sigma1 and sigma2} for $\sigma_1$, $\sigma_2^{-1}: \Sigma_{0,4} \rightarrow \Sigma_{0,4}$.)}
%\label{fig:FigI-1}
\includegraphics[height=7cm]{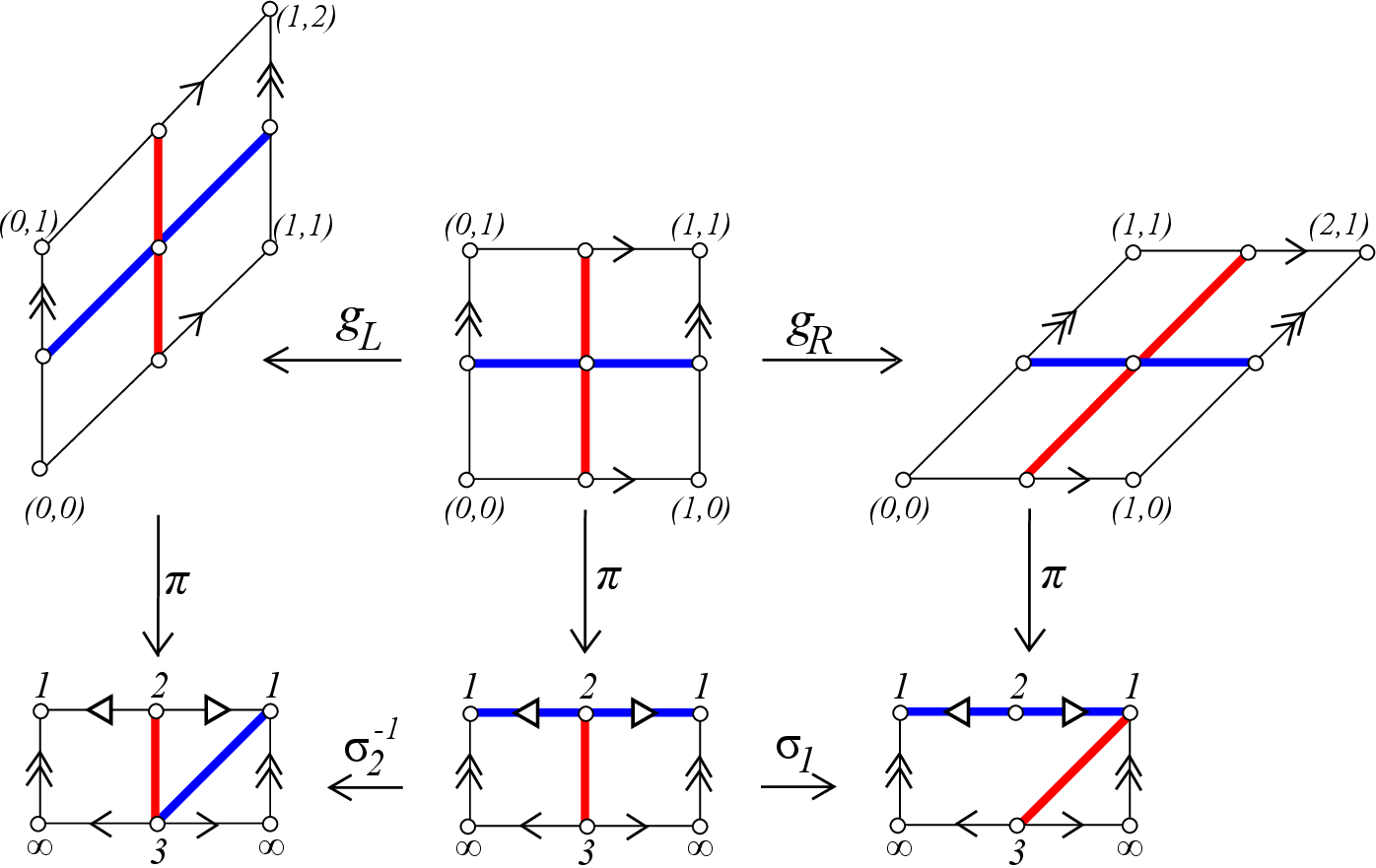}
\caption{Proof of Proposition~\ref{prop:pi}. 
%(Labels $1$, $2$, $3$ and $\infty$ represent punctures $a_1$, $a_2$, $a_3$ and $a_{\infty}$. 
See also Figure~\ref{fig_action3braid}.}
%for $\sigma_1$, $\sigma_2^{-1}: \Sigma_{0,4} \rightarrow \Sigma_{0,4}$.)
\label{fig_g_map} 
\end{center}
\end{figure}
%%%%%%%%

\begin{lemma}
\label{lem:same_dilatation}
Let $A = L^{q_k} R^{p_k} \cdots L^{q_1} R^{p_1}  $ be as in Theorem~\ref{theorem-braid}. 
Consider 
$$\phi_A= \s_2^{-q_k} \circ \s_1^{p_k} \circ \cdots \circ \s_2^{-q_1} \circ \s_1^{p_1} : \Sigma_{0,4} \rightarrow \Sigma_{0,4}$$ 
induced by $A$. 
Then we have the following. 
\begin{enumerate}
\item 
$\pi \circ g_A = \phi_A \circ \pi$, 
where $\pi: \Sigma_{1,4} \rightarrow \Sigma_{0,4}$ is the double covering map. 

\item 
%The homeomorphisms 
$g_A: \Sigma_{1,4} \rightarrow \Sigma_{1,4}$ and $\phi_A: \Sigma_{0,4} \rightarrow \Sigma_{0,4}$ 
are pseudo-Anosov maps with the same dilatation 
which is equal to the expanding eigenvalue $\lambda>1$ of $A$. 

\end{enumerate}
\end{lemma}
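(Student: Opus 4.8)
The plan is to prove the two statements in sequence, relying on the covering space $\pi:\Sigma_{1,4}\to\Sigma_{0,4}$ and the already–established properties of $f_A$ and $g_A$.

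\textbf{Statement (1): $\pi\circ g_A=\phi_A\circ\pi$.} First I would observe that since $g_A$ is the restriction of the linear map $\bar f_A:\T\to\T$ and $g_A = g_L^{q_k}\circ g_R^{p_k}\circ\cdots\circ g_L^{q_1}\circ g_R^{p_1}$ by the matrix factorization $A=L^{q_k}R^{p_k}\cdots L^{q_1}R^{p_1}$ (reading the composition right-to-left against the matrix product left-to-right, which matches the conventions in Section~\ref{subsection_braid-mappingclass}). I would then apply Proposition~\ref{prop:pi} repeatedly: each $g_L$ pushes across $\pi$ to become $\s_2^{-1}$ and each $g_R$ becomes $\s_1$. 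Concretely, $\pi\circ g_L^{q}=\s_2^{-q}\circ\pi$ and $\pi\circ g_R^{p}=\s_1^{p}\circ\pi$ follow by induction on $q$ and $p$ from Proposition~\ref{prop:pi}, and then composing the blocks gives
$$\pi\circ g_A=\pi\circ g_L^{q_k}\circ g_R^{p_k}\circ\cdots=\s_2^{-q_k}\circ\pi\circ g_R^{p_k}\circ\cdots=\cdots=\s_2^{-q_k}\circ\s_1^{p_k}\circ\cdots\circ\s_2^{-q_1}\circ\s_1^{p_1}\circ\pi=\phi_A\circ\pi.$$
The only thing to be careful about here is bookkeeping the order of composition versus the order of the matrix factors; since $A$ acts linearly this is just associativity of function composition, so it is routine once set up correctly.

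\textbf{Statement (2).} Since $\tr(A)>2$, the induced map $f_A:\Sigma_{1,1}\to\Sigma_{1,1}$ is pseudo-Anosov with dilatation the expanding eigenvalue $\lambda$, as recalled in Section~\ref{sec:2-1}. The $4$-punctured torus map $g_A$ differs from $f_A$ only by adding punctures at the periodic orbit $\{a_1,a_2,a_3\}$ of $f_A$ (together with the fixed point $a_\infty$ which is already a puncture in $\Sigma_{1,1}$); puncturing along a finite periodic orbit of a pseudo-Anosov map yields a pseudo-Anosov map with the same invariant foliations (now with extra prong singularities at the new punctures) and the same dilatation. Hence $g_A$ is pseudo-Anosov with dilatation $\lambda$. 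For $\phi_A$, I would invoke Statement (1): $\pi$ is a finite (degree $2$) covering and $g_A$ descends through $\pi$ to $\phi_A$; the stable/unstable measured foliations of $g_A$ are invariant under the deck transformation $g_{-\I}$ (because $-\I$ commutes with $A$ up to sign and $f_{-\I}$ preserves the eigen-directions of $A$), so they descend to measured foliations on $\Sigma_{0,4}$ that are invariant under $\phi_A$ with the same stretch factor $\lambda$. Therefore $\phi_A$ is pseudo-Anosov with dilatation $\lambda$.

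\textbf{Main obstacle.} The delicate point is justifying that $\phi_A$ genuinely inherits a \emph{pseudo-Anosov} structure from $g_A$ through the quotient by the involution, rather than merely inheriting invariant foliations: one must check that the quotient foliations have no forbidden singularities (e.g. no $1$-pronged singularities except at punctures, no closed leaves bounding annuli) and that the transverse measures push forward consistently across the branch points of $\pi$. The fixed points of the involution $g_{-\I}$ on $\Sigma_{1,4}$ are exactly the four punctures $a_\infty,a_1,a_2,a_3$, so $\pi$ is an honest covering map (unbranched), which simplifies this considerably; still, I would spend a sentence or two verifying that the image foliation structure is admissible, perhaps by citing the standard fact that a quotient of a pseudo-Anosov map by a finite group of symmetries preserving the foliations is again pseudo-Anosov. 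Everything else — the dilatations being equal — is then immediate since $\pi$ is a local homeomorphism and scales the transverse measure by $1$.
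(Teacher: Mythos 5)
Your proposal is correct and follows essentially the same route as the paper: Statement (1) by iterating Proposition~\ref{prop:pi} across the factorization $g_A=g_L^{q_k}\circ g_R^{p_k}\circ\cdots\circ g_L^{q_1}\circ g_R^{p_1}$, and Statement (2) by viewing $g_A$ as the restriction of the Anosov map $\f_A$ and pushing its invariant foliations down through the unbranched double cover $\pi$ to obtain the pseudo-Anosov structure for $\phi_A$ with the same $\lambda$. Your extra care about the admissibility of the quotient foliations is a detail the paper elides, but it does not change the argument.
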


\begin{proof}
Statement (1) follows from Proposition \ref{prop:pi}. 
For Statement (2) 
we recall the Anosov map $\f_A: \T \rightarrow \T$ induced by $A$. 
The dilatation of $\f_A$ is equal to the expanding eigenvalue $\lambda$ of $A$. 
Since $g_A: \Sigma_{1,4} \rightarrow \Sigma_{1,4}$ is the restriction of $\f_A$ to 
$\Sigma_{1,4} \subset \Sigma_{1,0}= \T$, 
one sees that  $g_A$ is a pseudo-Anosov map and $\f_A$ and $g_A$ have the same dilatation.
Let $\mathcal{F}^{\tt s}$ and $\mathcal{F}^{\tt u}$  be the stable and unstable foliations with respect to $g_A$. 
Since $g_A: \Sigma_{1,4} \rightarrow \Sigma_{1,4}$ is a  pull-back %\marginpar{lift is changed to pull-back} of 
$\phi_A: \Sigma_{0,4} \rightarrow \Sigma_{0,4}$ under $\pi: \Sigma_{1,4} \rightarrow \Sigma_{0,4}$,  
the images 
$\pi(\mathcal{F}^{\tt s})$ and $\pi(\mathcal{F}^{\tt u})$ under $\pi$ give the stable and unstable foliation with respect to $\phi_A$. 
Hence $\phi_A$ is also a pseudo-Anosov map with the same dilatation as that of $g_A$.
Thus the dilatation of $\phi_A$ is also the expanding eigenvalue $\lambda$ of $A$. 
This completes the proof. 
\end{proof}

Next we introduce 
the measured train track $(\mathcal T_0, \vxy)$ in $\Sigma_{1, 4}$ defined as the preimage of the measured train track $(\omega_0, \vxy)$  
under the double covering map $\pi: \Sigma_{1, 4}\to \Sigma_{0, 4}$ (Figure~\ref{fig_cover}); 
\begin{equation}
\label{eq:T_Omega}
(\mathcal T_0, \vxy):=\pi^{-1}(\omega_0, \vxy).
\end{equation}

We note that $(\mathcal T_0, \vxy)$ can be obtained by four copies of the measured train track $(\tau_0, \vxy)$ 
in  $\Sigma_{1,1}$ as in Figure~\ref{fig_cover}. 
To see this, consider the $4$-fold covering $\overline{\rho}: \T \to \T$ corresponding to the subgroup $2\Z\oplus2\Z$ of the fundamental group $\pi_1(\T)= \Z \oplus \Z$;   that is, the subgroup $\overline{\rho}_*(\pi_1(\T)) < \pi_1(\T)$ is isomorphic to $2\Z\oplus2\Z$ and its deck transformation group is $\Z/2\Z \oplus \Z/2\Z$. 
The covering map $\overline{\rho}$ induces a $4$-fold covering map %covering map と毎回几帳面に書いていると教科書的という感じなので、covering に変えてはどうでしょう？そもそも、covering だけで写像の意味なので。
$$\rho: \Sigma_{1, 4} \to \Sigma_{1,1}.$$ 
It satisfies 
$\rho\circ g_A=f_A\circ\rho \ \ \mbox{for any}\ \  A \in SL(2; \Z).$
Hence the preimage of $(\tau_0, \vxy)$ is 
%\marginal{Here do we say $\omega$ is the two copies of $\tau_0$?}
\begin{equation}\label{eq:T=}
(\mathcal T_0, \vxy) = \rho^{-1}(\tau_0, \vxy)
\end{equation}
and $g_A: \Sigma_{1,4} \rightarrow \Sigma_{1,4}$ is a pull-back of 
$f_A: \Sigma_{1,1} \rightarrow \Sigma_{1,1}$ under $\rho$. 
%\marginal{Could you say more precisely about $\rho$? See my comment in Japanese}
%%
%%トーラス上の elliptic involution の作用による商写像が $\pi$ であることは 良いのですが
%それに比べて $\rho$ がわかりにくいと思います. 
%トーラス上のメリディアン方向の半回転とロンジチュード方向の半回転の
%生成する群 $2Z + 2Z$の作用による商写像が $\rho$ だと思いますが 
%そのことをもう少し丁寧に説明できますか? 
%%
%%はい、この場合はnormal covering なのでそうなっています。deck transformation groupについて書き加えました。

\begin{figure}[htbp]
\centering
\includegraphics[height=8cm]{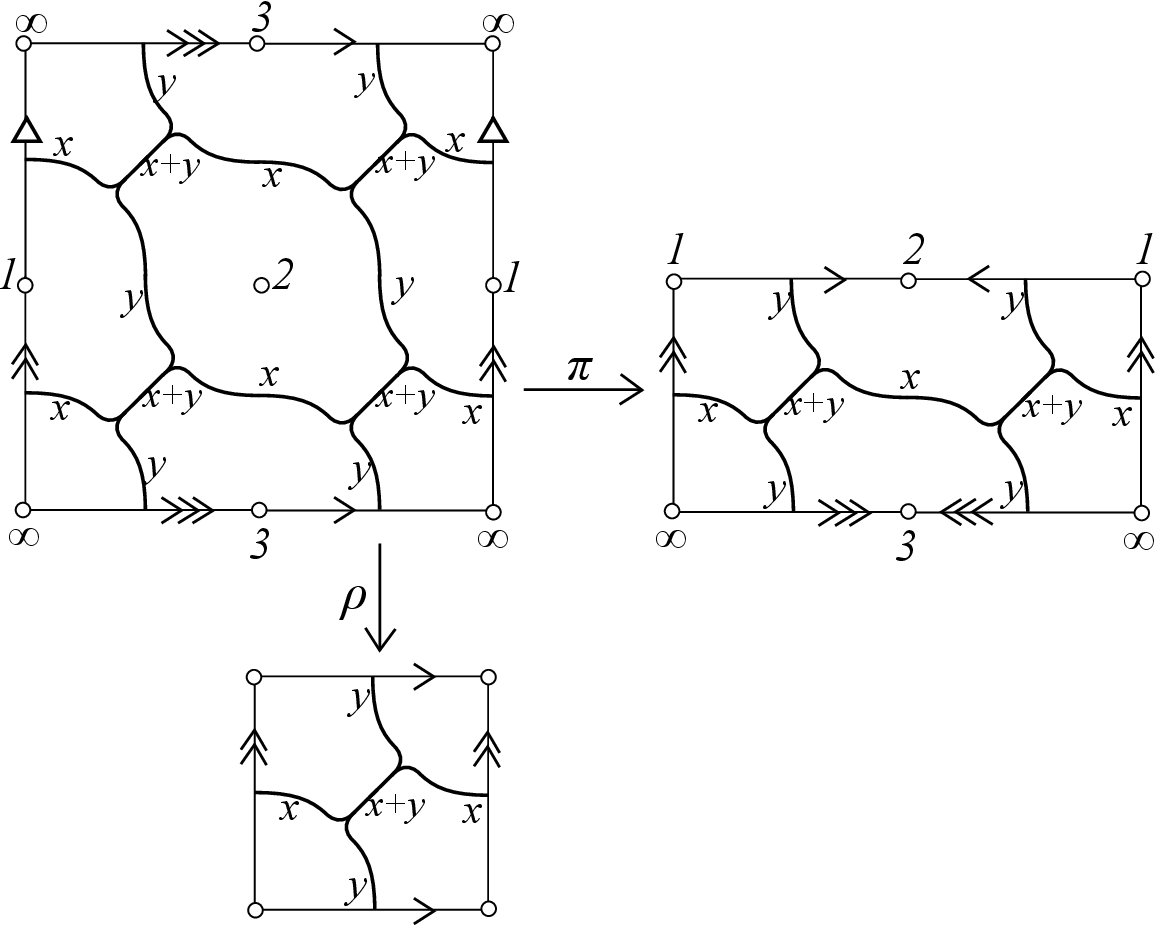}
\caption{The double covering map $\pi: \Sigma_{1, 4}\to \Sigma_{0, 4}$ and 
the $4$-fold covering map $\rho: \Sigma_{1, 4} \to \Sigma_{1,1}$.}
\label{fig_cover} 
\end{figure}

\begin{proposition}\label{pro:A}
For the measured train tracks $(\mathcal{T}_0, \vxy)$ in $\Sigma_{1,4}$ and 
$(\omega_0, \vxy)$ in $\Sigma_{0,4}$ 
we have the following. 
%\marginal{memo for eiko: $\mathcal{T}_0$ $4$ times and $\omega_0$ $2$ times.}
\begin{enumerate}
\item
If $y>qx$ then  both $(\mathcal{T}_0, \vxy)$ and 
$(\omega_0, \vxy)$ admit $q$ left splittings consecutively: 
$$(\mathcal{T}_0, \vxy) \ls^q  g_L^q (\mathcal{T}_0 , L^{-q}\vxy) %\hspace{2mm} 
\ \mbox{and}\ 
(\omega_0, \vxy) \ls^q \ \sigma_2^{-q}(\omega_0,  L^{-q}\vxy).$$
Moreover $\pi \circ \ls^q (\mathcal{T}_0, \vxy) = \ \ls^q \circ \pi (\mathcal{T}_0, \vxy) = \sigma_2^{-q}(\omega_0,  L^{-q}\vxy)$ as in the left commutative diagram below. 

\item
If $x>py$ then  both $(\mathcal{T}_0, \vxy)$ and 
$(\omega_0, \vxy)$ admits $p$ right splittings consecutively: 
$$(\mathcal{T}_0, \vxy) \rs^p g_R^p(\mathcal{T}_0,  R^{-p}\vxy) %\hspace{mm} 
\ \mbox{and}\ 
(\omega_0, \vxy) \rs^p \ \sigma_1^p (\omega_0, R^{-p}\vxy).$$
Moreover $\pi \circ \rs^p (\mathcal{T}_0, \vxy) =\  \rs^p \circ \pi  (\mathcal{T}_0, \vxy) = \sigma_1^p (\omega_0, R^{-p}\vxy)$ as in the right commutative diagram below. 
\end{enumerate}
$$
\begin{array}{ccc}
(\mathcal T_0, \vxy) & \ls^q & g_L^q (\mathcal T_0 , L^{-q}\vxy)  \\ 
&&\\
\pi \big\downarrow && \pi \big\downarrow \\
&&\\
(\omega_0, \vxy) & \ls^q & \sigma_2^{-q}(\omega_0,  L^{-q}\vxy) 
\end{array} \hspace{1cm}
\begin{array}{ccc}
(\mathcal T_0, \vxy) & \rs^p & g_R^p (\mathcal T_0 , R^{-p}\vxy)  \\ 
&&\\
\pi \big\downarrow && \pi \big\downarrow \\
&&\\
(\omega_0, \vxy) & \rs^p & \sigma_2^{p}(\omega_0,  R^{-p}\vxy) 
\end{array}
$$
\end{proposition}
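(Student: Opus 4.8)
The plan is to deduce Proposition~\ref{pro:A} from the corresponding statement in $\Sigma_{1,1}$ (Corollary~\ref{cor:tau-consequtive}) by lifting and pushing down through the two coverings $\rho:\Sigma_{1,4}\to\Sigma_{1,1}$ and $\pi:\Sigma_{1,4}\to\Sigma_{0,4}$. Since splittings, foldings and shiftings are all supported in small disk neighborhoods of the relevant branches, they lift and descend through any covering map provided the branches in question are evenly covered. The key geometric input is that each large branch of $(\tau_0,\vxy)$ has $4$ lifts in $(\mathcal T_0,\vxy)=\rho^{-1}(\tau_0,\vxy)$, all disjoint, each evenly covered, and each carrying the same weight $x+y$; hence a maximal splitting of $(\tau_0,\vxy)$ lifts to a maximal splitting of $(\mathcal T_0,\vxy)$, and this lifted maximal splitting consists of all-left (resp. all-right) splittings exactly when the one downstairs does. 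Iterating, $(\mathcal T_0,\vxy)\ls^q(\mathcal T_0',\mu')$ with $(\mathcal T_0',\mu')=\rho^{-1}$ of the result in $\Sigma_{1,1}$, and by Corollary~\ref{cor:tau-consequtive}-(1) together with $\rho\circ g_A=f_A\circ\rho$ we get $(\mathcal T_0',\mu')=g_L^q(\mathcal T_0,L^{-q}\vxy)$; similarly for right splittings with $g_R$ and $R^{-p}$.

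Next I would push this down via $\pi$. The hyperelliptic involution $g_{-\I}$ preserves $(\mathcal T_0,\vxy)$ (it is $\rho^{-1}$ of the $f_{-\I}$-invariant $(\tau_0,\vxy)$, and $\rho$ is equivariant), so $(\mathcal T_0,\vxy)$ descends to $(\omega_0,\vxy)=\pi(\mathcal T_0,\vxy)$, consistent with \eqref{eq:T_Omega}. The two large branches of $\omega_0$ each have exactly two lifts among the four large branches of $\mathcal T_0$, swapped by $g_{-\I}$; since $g_{-\I}$ is free on $\Sigma_{1,4}$, each large branch of $\omega_0$ is evenly covered, so a maximal splitting of $(\mathcal T_0,\vxy)$ descends to a maximal splitting of $(\omega_0,\vxy)$ of the same left/right type, and the diagram $\pi\circ\ls^q=\ls^q\circ\pi$ commutes at the level of measured train tracks. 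Combining with Lemma~\ref{lem:same_dilatation}-(1), i.e. $\pi\circ g_A=\phi_A\circ\pi$ with $\phi_A=\s_2^{-q_k}\circ\cdots$, and noting $g_L$ descends to $\s_2^{-1}$ and $g_R$ to $\s_1$ (Proposition~\ref{prop:pi}), we obtain $\pi\bigl(g_L^q(\mathcal T_0,L^{-q}\vxy)\bigr)=\s_2^{-q}(\omega_0,L^{-q}\vxy)$ and the analogous identity for $\rs^p$. The measure bookkeeping is immediate since $\pi$ and $\rho$ are local homeomorphisms: weights are preserved branch by branch, so the column-vector notation $L^{-q}\vxy$, $R^{-p}\vxy$ transfers verbatim from $\Sigma_{1,1}$ via Convention~\ref{def:measure convention}.

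The main obstacle is the even-coverage claim for the large branches under both $\rho$ and $\pi$, together with the assertion that a \emph{maximal} splitting upstairs projects to a \emph{maximal} splitting downstairs. For $\rho$ this requires checking, from the explicit picture of $(\mathcal T_0,\vxy)$ as four copies of $(\tau_0,\vxy)$ in Figure~\ref{fig_cover}, that the deck group $\Z/2\Z\oplus\Z/2\Z$ permutes the four large branches freely and that no large branch meets a branch point (there are none, $\rho$ being a covering of the \emph{punctured} torus). For $\pi$ one uses that $g_{-\I}$ acts freely on $\Sigma_{1,4}$ and that it exchanges the two large-branch lifts over each large branch of $\omega_0$; this is exactly the "exactly two large branches of the same weight, triply weighted" structure noted after Theorem~\ref{theorem-braid}, and it also guarantees that all large branches of $\mathcal T_0$ (resp. $\omega_0$) have the same maximal $\mu$-weight $x+y$, so the simultaneous splitting along all of them is genuinely the maximal splitting in the sense of Agol. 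Once this even-coverage and maximality compatibility is in place, everything else is a diagram chase using Lemma~\ref{lemma:0A}, Proposition~\ref{prop:pi}, Lemma~\ref{lem:same_dilatation}, and Corollary~\ref{cor:tau-consequtive}, and the two commutative squares in the statement follow by assembling the $\rho$-lift and the $\pi$-descent of a single maximal splitting and iterating $q$ (resp. $p$) times.
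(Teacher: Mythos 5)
Your proposal is correct, but for the $(\omega_0,\vxy)$ half it takes a genuinely different route from the paper. The paper proves the two rows of each square independently: the $\Sigma_{1,4}$ statement comes from Corollary~\ref{cor:tau-consequtive} together with the description $(\mathcal T_0,\vxy)=\rho^{-1}(\tau_0,\vxy)$, essentially as you do, but the $\Sigma_{0,4}$ statement is obtained by a direct picture-based verification that a single left (resp.\ right) splitting of $\omega_0$ yields $\sigma_2^{-1}(\omega_0)$ (resp.\ $\sigma_1(\omega_0)$) --- the $\Sigma_{0,4}$-analogue of Lemma~\ref{lem:LandR}, drawn in Figure~\ref{fig_3braid_lrmap} --- followed by iteration via Lemma~\ref{lemma:0A}; the commutativity of the squares is then read off at the end by comparing the two independently computed corners using Proposition~\ref{prop:pi}. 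You instead derive the bottom row from the top row by pushing a maximal splitting down through the double cover $\pi$, which requires (and you correctly isolate) the equivariance and even-coverage argument: $g_{-\I}$ acts freely on $\Sigma_{1,4}$, preserves $(\mathcal T_0,\vxy)$, and permutes the four large branches of common weight $x+y$ in pairs over the two large branches of $\omega_0$, so a deck-equivariant simultaneous splitting descends to the maximal splitting downstairs with the same handedness. Both arguments are sound. The paper's version avoids having to justify that maximal splittings descend through quotients, at the price of an explicit extra figure in $\Sigma_{0,4}$; yours avoids the extra figure at the price of that descent lemma, which you would need to spell out carefully --- in particular that the $\pi$-preimage of the set of maximal-weight large branches of $\omega_0$ is exactly the set of maximal-weight large branches of $\mathcal T_0$, so that ``maximal'' is preserved in both directions, and that the split track can be chosen $g_{-\I}$-invariantly up to isotopy.
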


\begin{proof}
Suppose that $y > qx$. 
By Corollary~\ref{cor:tau-consequtive}-(1) 
we have $(\tau_0,\vxy ) \ls^q f_L^q(\tau_0, L^{-q}\vxy) $. 
Using the fact that 
$g_A: \Sigma_{1,4} \rightarrow \Sigma_{1,4}$ is a restriction of $f_A: \Sigma_{1,1} \rightarrow \Sigma_{1,1}$ 
and the description of  $(\mathcal T_0, \vxy) $ in  (\ref{eq:T=}), 
one can verify that 
$(\mathcal T_0, \vxy) \ls^q g_L^q (\mathcal T_0 , L^{-q}\vxy)$.

We turn to the proof of 
$(\omega_0, \vxy) \ls^q \ \sigma_2^{-q}(\omega_0,  L^{-q}\vxy)$. 
We forget the measure for a moment. 
Figure~\ref{fig_3braid_lrmap} explains 
$\omega_0  \ls \sigma_2^{-1}(\omega_0)$ and $\omega_0  \rs \sigma_1(\omega_0)$. 
As in the proof of Proposition~\ref{propA}-(1) 
one can prove that 
$\omega_0 \ls^q  \sigma_2^{-q}(\omega_0)$ and $\omega_0 \rs^p  \sigma_1^p (\omega_0)$. 
Now we consider the measured train track $(\omega_0, \vxy)$ under the assumption $y > qx$. 
It is not hard to see that 
$(\omega_0, \vxy) \ls \ \sigma_2^{-1}(\omega_0,  L^{-1}\vxy)$. 
Using  Lemma~\ref{lemma:0A} repeatedly 
one can prove that $(\omega_0, \vxy)$ admits $q$ left splittings consecutively and 
$(\omega_0, \vxy) \ls^q \ \sigma_2^{-q}(\omega_0,  L^{-q}\vxy)$. 
Then by Proposition~\ref{prop:pi}
\begin{eqnarray*}
\pi \circ \ls^q (\mathcal T_0, \vxy) &=& \pi \circ g_L^q (\mathcal T_0 , L^{-q}\vxy) =  \sigma_2^{-q}(\omega_0,  L^{-q}\vxy), \ 
\mbox{and}
\\
\ls^q \circ \pi (\mathcal T_0, \vxy)&=& \ \ls^q (\omega_0, \vxy)= \sigma_2^{-q}(\omega_0,  L^{-q}\vxy). 
\end{eqnarray*}
The proof of (1) is done.  
By a similar argument we can prove Statement (2). 
\end{proof}

\begin{figure}[htbp]
\begin{center}
\includegraphics[height=6.5cm]{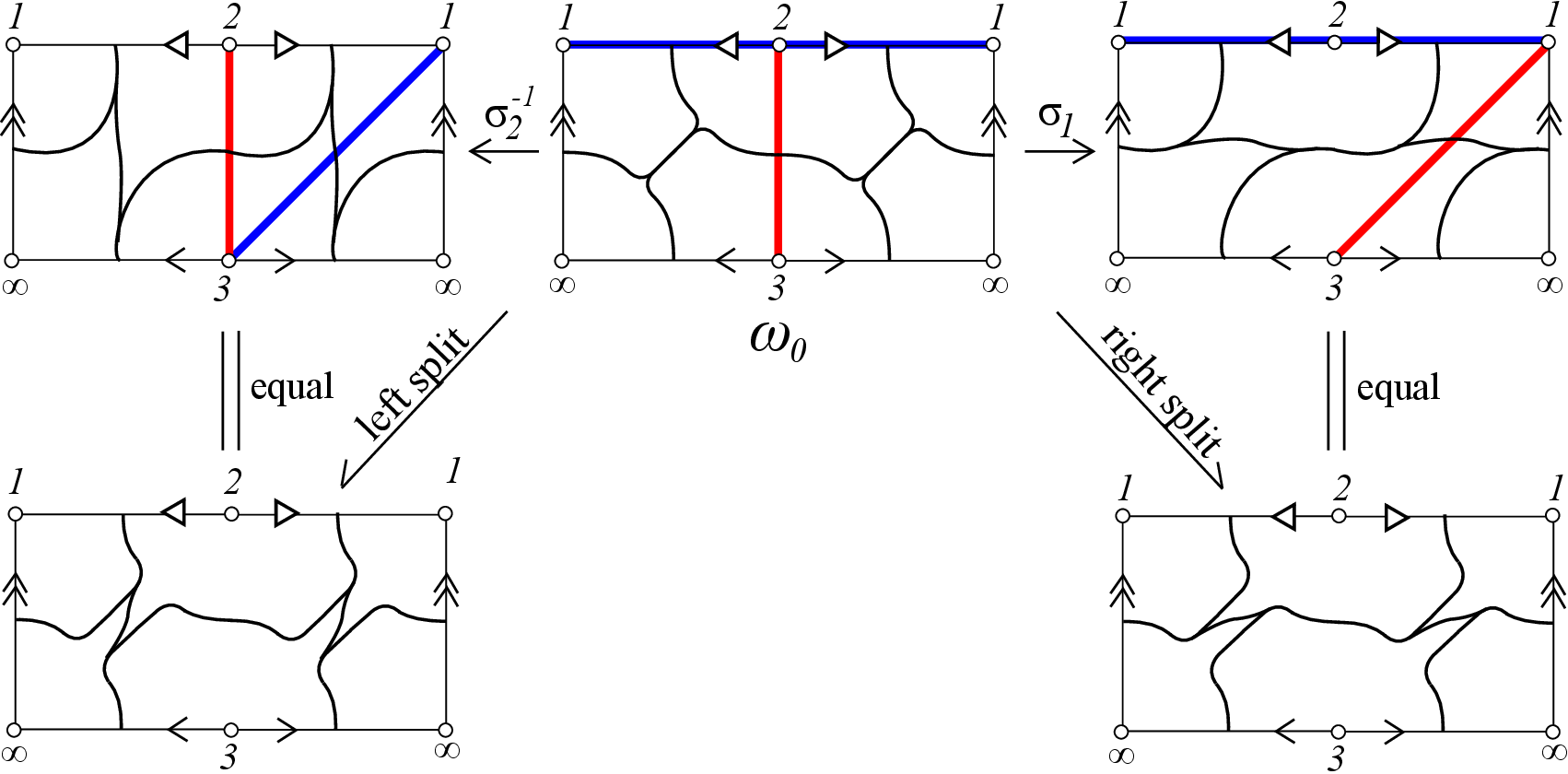}
\caption{Proof of Proposition~\ref{pro:A}: 
$\omega_0  \ls \sigma_2^{-1}(\omega_0)$ and $\omega_0  \rs \sigma_1(\omega_0)$.}
\label{fig_3braid_lrmap} 
\end{center}
\end{figure}

We now prove Theorem~\ref{theorem-braid}. 

\begin{proof}[Proof of Theorem~\ref{theorem-braid}]

By Theorem~\ref{theorem-2}-(1) 
$(\tau_0, \left(\begin{smallmatrix} 1 \\ s \end{smallmatrix}\right))$ is suited to the stable measured lamination of $f_A$. 
The pseudo-Anosov map $g_A: \Sigma_{1,4} \rightarrow \Sigma_{1,4}$ 
is a pull-back of $f_A: \Sigma_{1,1} \rightarrow \Sigma_{1,1}$ under the covering map $\rho: \Sigma_{1,4} \rightarrow \Sigma_{1,1}$, 
and $g_A$ is also  a pull-back of $\phi_A: \Sigma_{0,4} \rightarrow \Sigma_{0,4}$ 
under the covering map $\pi: \Sigma_{1,4} \rightarrow \Sigma_{0,4}$. 
Thus  the measured train track 
$(\omega_0, \left(\begin{smallmatrix} 1 \\ s \end{smallmatrix}\right)) = \pi \circ \rho^{-1} (\tau_0, \left(\begin{smallmatrix} 1 \\ s \end{smallmatrix}\right))$ 
is suited to the stable measured lamination of $\phi_A$. 
%the preimage 
%$(\mathcal T_0, \left(\begin{smallmatrix} 1 \\ s \end{smallmatrix}\right)) = \rho^{-1}(\tau_0, \left(\begin{smallmatrix} 1 \\ s \end{smallmatrix}\right))$ 
%is suited to the stable measured lamination with respect to $g_A$. 
%On the other hand, since $g_A: \Sigma_{1,4} \rightarrow \Sigma_{1,4}$ is also a pull-back of 
%$\phi_A: \Sigma_{0,4} \rightarrow \Sigma_{0,4}$ 
%under the covering map $\pi: \Sigma_{1,4} \rightarrow \Sigma_{0,4}$ 
%we conclude that the measured train track 
%$(\omega_0, \left(\begin{smallmatrix} 1 \\ s \end{smallmatrix}\right)) = \pi (\mathcal T_0, \left(\begin{smallmatrix} 1 \\ s \end{smallmatrix}\right))$ 
%is suited to the stable measured lamination with respect to $\phi_A$. 

Theorem~\ref{theorem-2} states that 
$(\tau_0, \mu_0 = \left(\begin{smallmatrix} 1 \\ s \end{smallmatrix}\right))
\ls^{q_k} \ 
\rs^{p_k} \ 
\cdots \ 
\ls^{q_1} \ 
\rs^{p_1}
(\tau_\ell, \mu_\ell)
$
 forms a length $\ell$ Agol cycle of $f_A$. 
 Note that similar commutative diagrams as in Proposition~\ref{pro:A} hold for the pair of measured train tracks $(\mathcal{T}_0, \left(\begin{smallmatrix} x \\ y \end{smallmatrix}\right))$ and 
 $(\tau_0, \left(\begin{smallmatrix} x \\ y \end{smallmatrix}\right))$. 
 Moreover by Lemma~\ref{lem:same_dilatation}-(2) the dilatations of $f_A$, $g_A$ and $\phi_A$ are the same which is the expanding eigenvalue of $A$.  
 These facts together with the property $\rho \circ g_{A'} = f_{A'} \circ \rho$ for each $A' \in \SL(2; Z)$ imply that 
$$(\mathcal{T}_0, \left(\begin{smallmatrix} 1 \\ s \end{smallmatrix}\right)) = \rho^{-1}(\tau_0, \mu_0) 
\ls^{q_k} \ 
\rs^{p_k} \ 
\cdots \ 
\ls^{q_1} \ 
\rs^{p_1}
\rho^{-1}(\tau_\ell, \mu_\ell)$$
gives a length $\ell$ Agol cycle of $g_A$. 
Each measured train track of the above Agol cycle of $g_A$ is of the form $ \rho^{-1}(\tau_i, \mu_i)$. 
By Proposition~\ref{propA1}-(2)  $\tau_i$ is preserved by the involution 
$f_{-\I}: \Sigma_{1,1} \rightarrow \Sigma_{1,1}$. 
Thus $\rho^{-1}(\tau_i) $ is also preserved by the involution $g_{-\I}: \Sigma_{1,4} \rightarrow \Sigma_{1,4}$. 
This means that $\pi\circ \rho^{-1}(\tau_i)$ descends to a train track in the quotient space $\Sigma_{0,4}$.

 On the other hand, 
 we have the commutative diagrams in Proposition~\ref{pro:A}.  
 The above Agol cycle of $g_A$ together with Proposition~\ref{prop:pi} tells us that 
% if we start with the measured train track $(\omega_0, \nu_0):
% =\pi (\mathcal{T}, \left(\begin{smallmatrix} 1 \\ s \end{smallmatrix}\right)) = \pi \circ \rho^{-1}(\tau_0,\mu_0)$, 
% then 
 $$(\omega_0, \nu_0) 
 \ls^{q_k} \ 
\rs^{p_k} \ 
\cdots \ 
\ls^{q_1} \ 
\rs^{p_1}
(\omega_{\ell}, \nu_{\ell}),$$ 
where 
$(\omega_i, \nu_i) = \pi \circ \rho^{-1} (\tau_i, \mu_i)$ where $i= 1, \dots, \ell$ 
gives a length $\ell$ Agol cycle of $\phi_A$. 
This completes the proof. 
\end{proof}

\begin{example}[The pseudo-Anosov $3$-braid $\sigma_1 \sigma_2^{-1}$]
\label{ex_lr_braid}
Let $A = LR $ and $s =  \tfrac{1+ \sqrt{5}}{2} $ be as in Example~\ref{ex_lr_torus}. 
Then 
$(\omega_0, \nu_0= \left(\begin{smallmatrix} 1 \\ s \end{smallmatrix}\right)) \ls (\omega_1, \nu_1) \rs (\omega_2, \nu_2)$  
forms a length $2$ Agol cycle of  $\phi_A= \sigma_2^{-1} \circ \sigma_1: \Sigma_{0,4} \rightarrow \Sigma_{0,4}$. 
See Figure~\ref{fig_lr_braid}. 
\end{example}

\begin{figure}[htbp]
\centering
\includegraphics[height=3.2cm]{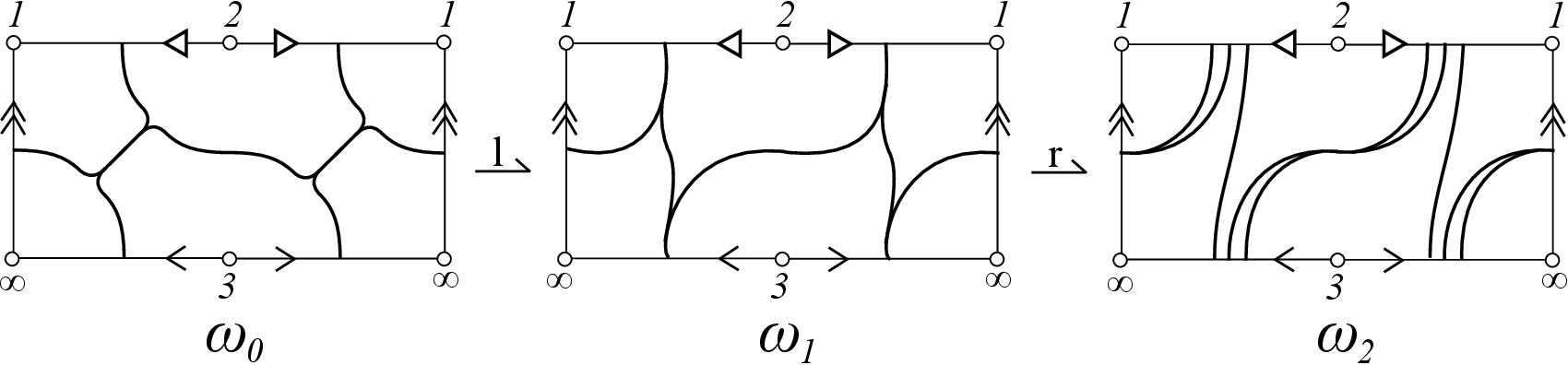}
\caption{
Example~\ref{ex_lr_braid}: $\omega_0 \ls \omega_1 \rs \omega_2$. 
See also Figure~\ref{fig_lr_torus}.}
\label{fig_lr_braid} 
\end{figure}

\begin{corollary}\label{cor:A}
Pseudo-Anosov $3$-braids $\beta$ and $\beta'$ are conjugate in $B_3/Z(B_3)$ where $Z(B_3)$ is the center of $B_3$ 
if and only if their Agol cycles are equivalent. 
\end{corollary}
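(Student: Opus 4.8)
The strategy is to reformulate both sides of the equivalence as combinatorial data attached to $\beta$ and $\beta'$, using Murasugi's classification on the algebraic side and Theorem~\ref{theorem-braid} on the Agol-cycle side. Since both conjugacy in $B_3/Z(B_3)$ and the equivalence class of an Agol cycle are unchanged under conjugation in $B_3$, we may assume throughout that $\beta=\Delta^{2j}\s_1^{p_1}\s_2^{-q_1}\cdots\s_1^{p_k}\s_2^{-q_k}$ and $\beta'=\Delta^{2j'}\s_1^{p'_1}\s_2^{-q'_1}\cdots\s_1^{p'_{k'}}\s_2^{-q'_{k'}}$ are in Murasugi normal form. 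I would first record two reformulations. On the one hand, because $\ker\Gamma=Z(B_3)=\langle\Delta^2\rangle$, the anti-homomorphism $\Gamma$ induces an injection of $B_3/Z(B_3)$ into $\MCG(\Sigma_{0,4})$, and by Murasugi's classification $\beta$ and $\beta'$ are conjugate in $B_3/Z(B_3)$ if and only if $k=k'$ and the tuples $(p_1,q_1),\dots,(p_k,q_k)$ and $(p'_1,q'_1),\dots,(p'_{k'},q'_{k'})$ agree up to cyclic permutation (here one uses that $\Delta^2$ is central, so $j$ is irrelevant modulo the center, and that cyclically permuting $\s_1^{p_1}\s_2^{-q_1}\cdots\s_1^{p_k}\s_2^{-q_k}$ yields a conjugate word in $B_3$). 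On the other hand, by definition the Agol cycle of $\beta$ is that of $\Gamma(\beta)=\phi_A$ with $A=L^{q_k}R^{p_k}\cdots L^{q_1}R^{p_1}$, which is the length-$\ell$ cycle described in Theorem~\ref{theorem-braid}.

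For the forward implication: if $\beta$ and $\beta'$ are conjugate in $B_3/Z(B_3)$, then $\Gamma(\beta)$ and $\Gamma(\beta')$ are conjugate pseudo-Anosov elements of $\MCG(\Sigma_{0,4})$, so by the conjugacy invariance of the equivalence class of an Agol cycle (the consequence of Theorem~\ref{theorem_Hodgson-Issa-Segerman} noted above) the Agol cycles of $\beta$ and $\beta'$ are equivalent.

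For the converse: assume the Agol cycles of $\beta$ and $\beta'$ are equivalent. By Theorem~\ref{theorem-braid} the Agol cycle of $\Gamma(\beta)$ is $(\omega_0,\nu_0)\ls^{q_k}\rs^{p_k}\cdots\ls^{q_1}\rs^{p_1}(\omega_\ell,\nu_\ell)$, so the one-period sequence of maximal-splitting types occurring in it, read cyclically, is the cyclic word $W=L^{q_k}R^{p_k}L^{q_{k-1}}R^{p_{k-1}}\cdots L^{q_1}R^{p_1}$; likewise one obtains $W'$ for $\beta'$. Equivalence of the two Agol cycles provides, via Lemma~\ref{lemma_comb_eq}, an orientation-preserving diffeomorphism $h$ carrying the periodic splitting sequence of $\Gamma(\beta)$ onto that of $\Gamma(\beta')$ from some term on, and since $h$ preserves orientation it carries left maximal splittings to left maximal splittings; hence $W=W'$ as cyclic words. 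Now a cyclic word $L^{q_k}R^{p_k}\cdots L^{q_1}R^{p_1}$ with all exponents positive determines the cyclic sequence of pairs (length of a maximal $L$-run, length of the maximal $R$-run immediately following it), namely $[(q_k,p_k),(q_{k-1},p_{k-1}),\dots,(q_1,p_1)]$; applying to it the self-inverse operation ``reverse the cyclic order and transpose each pair'', which is well defined on cyclic sequences, recovers $[(p_1,q_1),\dots,(p_k,q_k)]$. Therefore $W=W'$ forces $k=k'$ and the agreement of $(p_i,q_i)_i$ with $(p'_i,q'_i)_i$ up to cyclic permutation, and by the reformulation above $\beta$ and $\beta'$ are conjugate in $B_3/Z(B_3)$.

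The main obstacle is the converse, and within it the assertion that the cyclic word $W$ of maximal-splitting types is a genuine invariant of the equivalence class of the Agol cycle. This requires checking that each maximal splitting in (\ref{eq:Agol_cycle}) is intrinsically of left type or of right type, and not a mixture of the two, so that $W$ is defined at all; that an orientation-preserving diffeomorphism preserves the left/right type of a maximal splitting; and that $W$ does not depend on which term of the periodic splitting sequence is taken as the base point. Given that Theorem~\ref{theorem-braid} already exhibits (\ref{eq:Agol_cycle}) as a string of purely left and purely right maximal splittings, and that Lemma~\ref{lemma_comb_eq} propagates the matching diffeomorphism along the whole sequence, each of these is routine, as is the cyclic-sequence bookkeeping.
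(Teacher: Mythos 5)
Your proposal is correct and follows essentially the same route as the paper: the forward direction is the conjugacy invariance from Theorem~\ref{theorem_Hodgson-Issa-Segerman}, and the converse extracts the cyclic sequence of pairs $(p_i,q_i)$ from the pattern of left/right maximal splittings in the Agol cycle of Theorem~\ref{theorem-braid}, then invokes Murasugi's uniqueness up to cyclic permutation. The paper phrases the converse as a contrapositive and leaves the ``cyclic word of splitting types is an invariant'' bookkeeping implicit (citing Lemma~\ref{lemma:0A}), whereas you spell it out via Lemma~\ref{lemma_comb_eq} and the orientation-preservation of $h$; this is a presentational difference only.
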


\begin{proof}
The only-if-part follows from Theorem~\ref{theorem_Hodgson-Issa-Segerman}. For the if-part we prove the contrapositive statement. 
Assume that pseudo-Anosov 3-braids $\beta$ and $\beta'$ are not conjugate in $B_3/Z(B_3)$. 
Due to Murasugi's classification we may assume that $\beta$ is conjugate to $\Delta^{2j} \s_1^{p_1} \s_2^{-q_1} \cdots \s_1^{p_k} \s_2^{-q_k}$ and $\beta'$ is conjugate to $\Delta^{2j'} \s_1^{p'_1} \s_2^{-q'_1} \cdots \s_1^{p'_l} \s_2^{-q'_l}$ for some $j, k, p_1, q_1, \dots, p_k, q_k$ and $j', l, p'_1, q'_1, \dots$, $p'_l$, $q'_l$.
If their Agol cycles were equivalent then by Theorem~\ref{theorem-braid-intro} and Lemma~\ref{lemma:0A} the cyclically-ordered sets $\{(p_1, q_1), \cdots, (p_k, q_k)\}$ and $\{(p'_1, q'_1), \cdots, (p'_l, q'_l)\}$ had to be equal, which means that  $\beta$ and $\beta'$ are conjugate in $B_3/Z(B_3)$. This is a contradiction. 
\end{proof}

\subsection{Garside canonical lengths v.s. Agol cycle lengths}
\label{subsection:Garside_canonical_length}

The {\em Garside (left) normal form} of an $n$-braid is used to improve Garside's solution to the conjugacy problem for $B_n$ \cite{El-RifaiMorton94,Epstein92}. 
Garside introduced the fundamental braid $\Delta$ 
%:=  (\s_1)(\s_2 \s_1)(\s_3 \s_2 \s_1)\dots(\s_{n-1} \dots \s_1)$ 
which is a positive half-twist on all of the $n$ strands. 
The {\em simple} elements in $B_n$ are the positive braids in which every pair of strands cross at most once. 
There are $n!$ simple elements in $B_n$.  
Given a braid $b \in B_n$, its Garside normal form is a special representation by a braid word of the form $\Delta^r P_1 P_2 \cdots P_s$ for some integer $r$ and simple elements $P_1, \cdots, P_s$ satisfying 
the condition that each $P_i P_{i+1}$ is {\em left-weighted} 
 introduced by Elrifai and Morton \cite{El-RifaiMorton94}. 
The number of simple elements in the Garside normal form is called the  {\em canonical length} of $b$. 
The super summit set ${\rm SSS}(b)$ is a finite set of $n$-braids with minimal canonical length in the conjugacy class of $b$. 
The conjugacy problem in $B_n$ can be solved by computing the super summit set ${\rm SSS}(b)$. 

For 3-braids, 
the fundamental element is $\Delta = \sigma_1 \sigma_2 \sigma_1 = \sigma_2 \sigma_1 \sigma_2 $. 
The Garside normal form of the pseudo-Anosov $3$-braid $\s_1^p \s_2^{-q}$ for positive integers $p$ and $q$ is given as follows. 
\begin{eqnarray*}
&\Delta^{-q}& \hspace{-3mm} \sigma_2^p \cdot (\sigma_2 \sigma_1) \cdot (\sigma_1 \sigma_2)  \cdots   (\sigma_2 \sigma_1) 
\hspace{17mm} \mbox{if\ } q \ \mbox{is odd}, 
\\
&\Delta^{-q}& \hspace{-3mm} \sigma_1^p \cdot (\sigma_1 \sigma_2) \cdot (\sigma_2 \sigma_1)      \cdots  (\sigma_1 \sigma_2) \cdot (\sigma_2 \sigma_1)  
\hspace{3mm} \mbox{if\ } q \ \mbox{is even}, 
\end{eqnarray*}
where the simple elements $\s_2\s_1$ and $\s_1\s_2$ alternate in the tail and 
the number of the simple elements $\s_2\s_1$ or $\s_1\s_2$ is $q$. 
This can be obtained by replacing $\sigma_2^{-1}$ with $\Delta^{-1} \sigma_2 \sigma_1$, and shift $\Delta^{-1}$ to the left. 
For example when $q=1$ and $2$,  
we have 
$\sigma_1^p \sigma_2^{-1}= \Delta^{-1} \sigma_2^p \cdot (\sigma_2 \sigma_1)$ 
and 
$\sigma_1^p \sigma_2^{-2} = \Delta^{-2} \sigma_1^p \cdot (\sigma_1 \sigma_2) \cdot (\sigma_2 \sigma_1)$. 
Thus, the canonical length of  $\s_1^p \s_2^{-q}$  is $p+q$.

Similarly, a direct computation  shows that 
the Garside canonical length of the  pseudo-Anosov $3$-braid 
$\beta= \Delta^{2m} \s_1^{p_1} \s_2^{-q_1} \cdots \s_1^{p_k} \s_2^{-q_k}$ is $ p_1+q_1+\cdots+p_k+q_k$ which is by Theorem~\ref{theorem-braid} exactly the Agol cycle length of $\beta$. %the braid Moreover it is known that a pseudo-Anosov $3$-braid of the form $\Delta^{2m} \s_1^{p_1} \s_2^{-q_1} \cdots \s_1^{p_k} \s_2^{-q_k}$ belongs to  its supper summit set. 
%{\color{blue}\marginal{`Morover' is removed. A symbol $\beta$ is introduced} 
It is known that $\beta$ belongs to its super summit set,  see %Marta 
 Aguilera \cite{Aguilera15} for example. 
Therefore, we obtain the following result.

\begin{theorem}\label{thm:Garside}
For every pseudo-Anosov $3$-braid $\beta$, the Agol cycle length of $\beta$, the Garside canonical length of any element in the super summit set ${\rm SSS}(\beta)$ are the same.
 \end{theorem}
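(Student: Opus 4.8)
The plan is to compute both quantities on a single normal-form representative of the $B_3$-conjugacy class of $\beta$, and then to invoke the fact that each of them depends only on that conjugacy class. By Murasugi's classification \cite{Murasugi74} quoted in the introduction, $\beta$ is conjugate in $B_3$ to a braid $\hat\beta = \Delta^{2j}\sigma_1^{p_1}\sigma_2^{-q_1}\cdots\sigma_1^{p_k}\sigma_2^{-q_k}$ with $j\in\Z$ and $p_i,q_i,k$ positive integers; set $\ell = p_1 + q_1 + \cdots + p_k + q_k$. Two observations make the reduction legitimate. First, the super summit set is determined by the conjugacy class, so ${\rm SSS}(\beta) = {\rm SSS}(\hat\beta)$, and all its elements share one common (minimal) canonical length \cite{El-RifaiMorton94}; thus it suffices to identify the canonical length of $\hat\beta$. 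Second, the Agol cycle length is a conjugacy invariant of pseudo-Anosov mapping classes (noted after Theorem~\ref{theorem_Hodgson-Issa-Segerman}), and $\Gamma$ carries $B_3$-conjugate braids to conjugate elements of $\MCG(\Sigma_{0,4})$ since it is an anti-homomorphism with $\Gamma(g^{-1})=\Gamma(g)^{-1}$; hence the Agol cycle length of $\beta$ equals that of $\hat\beta$. So the theorem reduces to showing that both the Agol cycle length and the Garside canonical length of $\hat\beta$ equal $\ell$, and that $\hat\beta$ lies in ${\rm SSS}(\hat\beta)$.

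The Agol side is immediate from the work already done: Theorem~\ref{theorem-braid-intro}(3) (equivalently Theorem~\ref{theorem-braid}) says that, starting from $(\omega_0,\nu_0)$, the maximal splitting sequence closes up — up to the action of $\phi_{\hat\beta}$ and rescaling by the dilatation $\lambda$ — after exactly $\ell$ steps, so the Agol cycle length of $\hat\beta$ is $\ell$. For the Garside side I would compute the left normal form of $\hat\beta$ explicitly. Rewriting $\sigma_2^{-1}=\Delta^{-1}\sigma_2\sigma_1$, each block $\sigma_1^{p_i}\sigma_2^{-q_i}$ becomes $\Delta^{-q_i}W_i$ with $W_i$ the positive word recorded just before the statement (of the form $\sigma_2^{p_i}(\sigma_2\sigma_1)(\sigma_1\sigma_2)\cdots(\sigma_2\sigma_1)$ for $q_i$ odd, and $\sigma_1^{p_i}(\sigma_1\sigma_2)(\sigma_2\sigma_1)\cdots(\sigma_2\sigma_1)$ for $q_i$ even), whose left normal form has precisely $p_i+q_i$ proper simple factors. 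Sliding all $\Delta$-powers — the central $\Delta^{2j}$ together with the $\Delta^{-q_i}$ — to the far left converts each $W_i$ into its image under conjugation by a $\Delta$-power, which at most swaps $\sigma_1\leftrightarrow\sigma_2$ inside $W_i$; one then checks that the concatenation of the (possibly swapped) blocks is left-weighted at every internal junction and that no simple factor is trivial or equals $\Delta$. This gives canonical length $\sum_i(p_i+q_i)=\ell$, independent of $j$. That $\hat\beta$ already belongs to its super summit set is the observation of Aguilera \cite{Aguilera15} cited above.

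Putting the two halves together, the Agol cycle length of $\beta$ equals $\ell$, and every element of ${\rm SSS}(\beta)={\rm SSS}(\hat\beta)$ has canonical length $\ell$; hence the three quantities in the statement agree. The only step that is not routine bookkeeping is the left-weightedness check at the junctions between consecutive blocks: there one must keep track of how the $\sigma_1\leftrightarrow\sigma_2$ swap induced by each $\Delta^{-q_i}$ interacts with the parities of the $q_i$ so as to match the finishing set of the last simple factor of one block with the starting set of the first simple factor of the next. This is exactly the ``direct computation'' referred to in the text preceding the statement; with it in hand, the theorem follows by assembling the results already established.
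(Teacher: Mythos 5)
Your proposal is correct and follows essentially the same route as the paper: read off the Agol cycle length $\ell$ from Theorem~\ref{theorem-braid}, compute the Garside normal form of the Murasugi representative by the substitution $\sigma_2^{-1}=\Delta^{-1}\sigma_2\sigma_1$ to get canonical length $\ell$, and cite Aguilera for membership in the super summit set. The paper likewise leaves the left-weightedness bookkeeping at the block junctions as a ``direct computation,'' so your treatment matches its level of detail while being somewhat more explicit about the conjugacy-invariance reduction.
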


The same result as Theorem~\ref{thm:Garside} does not hold for higher braid index. 
For example consider the $5$-braid $\alpha:=  \sigma_3 \sigma_2 \sigma_3 \sigma_4  \sigma_1^{-1}$. 
%\marginal{$\beta$ is changed to $\alpha$}
By \cite[Section 5]{Agol11} the Agol cycle length of $\alpha$ is $6$ but the Garside canonical length of any element in ${\rm SSS}(\alpha)$ is $2$. 
(One can use a computer program {\em Braiding} by Gonz\'{a}lez-Meneses  to compute the super summit set \cite{Gonzalez-Meneses04}.) 
It would be interesting to explore the relation between Agol cycle lengths and Garside canonical lengths for general pseudo-Anosov braids.

For the dual-version of the Garside normal form, also known as the Birman-Ko-Lee normal form, we confirm the same result as Theorem~\ref{thm:Garside}. 
For the above $5$-braid $\alpha$, it's dual Garside canonical length of any super summit element is $3$.

\section*{Acknowledgement}
The authors thank Ian Agol, Fran\c{c}ois Gu\'{e}ritaud, Ahmad Issa, Dan Margalit, Makoto Sakuma,  Saul Schleimer, and Henry Segerman for valuable comments. %%  list names in Alphabetical order 
Kawamuro's research was supported by NSF DMS-2005450.
Kin's research was  supported by JSPS KAKENHI Grant Numbers JP21K03247, JP22H01125, JP23H01081.

\bibliographystyle{hamsplain}
\bibliography{biblio}

\def\cprime{$'$}
\providecommand{\bysame}{\leavevmode\hbox to3em{\hrulefill}\thinspace}
\providecommand{\href}[2]{#2}
\begin{thebibliography}{10}

\bibitem{AcevesKawamuro23}
Elaina Aceves and Keiko Kawamuro, \emph{Agol cycles of pseudo-{A}nosov
  3-braids}, Geom. Dedicata \textbf{217} (2023), no.~5, 81.

\bibitem{Agol11}
Ian Agol, \emph{Ideal triangulations of pseudo-{A}nosov mapping tori}, Topology
  and geometry in dimension three, Contemp. Math., vol. 560, Amer. Math. Soc.,
  Providence, RI, 2011, pp.~1--17.

\bibitem{AgolTsang22}
Ian Agol and Chi~Cheuk Tsang, \emph{Dynamics of veering triangulations:
  infinitesimal components of their flow graphs and applications}, Algebraic
  and Geometric Topology, to appear.

\bibitem{Aguilera15}
Marta Aguilera, \emph{On dilatation factors of braids on three strands}, J.
  Knot Theory Ramifications \textbf{24} (2015), no.~4, 1550019, 14.

\bibitem{BestvinaHandel95}
M.~Bestvina and M.~Handel, \emph{Train-tracks for surface homeomorphisms},
  Topology \textbf{34} (1995), no.~1, 109--140.

\bibitem{El-RifaiMorton94}
Elsayed~A. El-Rifai and H.~R. Morton, \emph{Algorithms for positive braids},
  Quart. J. Math. Oxford Ser. (2) \textbf{45} (1994), no.~180, 479--497.

\bibitem{Epstein92}
David B.~A. Epstein, James~W. Cannon, Derek~F. Holt, Silvio V.~F. Levy,
  Michael~S. Paterson, and William~P. Thurston, \emph{Word processing in
  groups}, Jones and Bartlett Publishers, Boston, MA, 1992.

\bibitem{FarbMargalit12}
Benson Farb and Dan Margalit, \emph{A primer on mapping class groups},
  Princeton Mathematical Series, vol.~49, Princeton University Press,
  Princeton, NJ, 2012.

\bibitem{Gabai83}
David Gabai, \emph{The {M}urasugi sum is a natural geometric operation},
  Low-dimensional topology ({S}an {F}rancisco, {C}alif., 1981), Contemp. Math.,
  vol.~20, Amer. Math. Soc., Providence, RI, 1983, pp.~131--143.

\bibitem{Gonzalez-Meneses04}
Juan Gonz\'{a}lez-Meneses, Braiding: A computer program for handling braids,
  \verb#https://personal.us.es/meneses/software.php#.

\bibitem{Gueritaud06}
Fran\c{c}ois Gu\'{e}ritaud, \emph{On canonical triangulations of once-punctured
  torus bundles and two-bridge link complements}, Geom. Topol. \textbf{10}
  (2006), 1239--1284, With an appendix by David Futer.

\bibitem{Hatcher22}
Allen Hatcher, \emph{Topology of numbers}, American Mathematical Society,
  Providence, RI, [2022] \copyright 2022.

\bibitem{HodgsonIssaSegerman16}
Craig~D. Hodgson, Ahmad Issa, and Henry Segerman, \emph{Non-geometric veering
  triangulations}, Exp. Math. \textbf{25} (2016), no.~1, 17--45.

\bibitem{Karpenkov22}
Oleg~N. Karpenkov, \emph{Geometry of continued fractions}, second ed.,
  Algorithms and Computation in Mathematics, vol.~26, Springer, Berlin, [2022]
  \copyright 2022.

\bibitem{Lackenby03}
Marc Lackenby, \emph{The canonical decomposition of once-punctured torus
  bundles}, Comment. Math. Helv. \textbf{78} (2003), no.~2, 363--384.

\bibitem{Margalit16}
Dan Margalit, \emph{Fast nielsen-thurston classification}, Talk slide,
  Effective and Algorithmic Methods in Hyperbolic Geometry and Free Groups,
  ICERM (2016).

\bibitem{Mosher03}
Lee Mosher, \emph{What is a train track?}, Notices of the AMS {\bf 50} (2003),
  no. 3, 354--356.

\bibitem{Mosher2003}
\bysame, \emph{Train track expansions of measured foliations}, Preprint, 2003.

\bibitem{Murasugi74}
Kunio Murasugi, \emph{On closed {$3$}-braids}, American Mathematical Society,
  Providence, R.I., 1974, Memoirs of the American Mathmatical Society, No. 151.

\bibitem{PennerHarer92}
R.~C. Penner and J.~L. Harer, \emph{Combinatorics of train tracks}, Annals of
  Mathematics Studies, vol. 125, Princeton University Press, Princeton, NJ,
  1992.

\bibitem{thurston:mappingtori}
W.~P. Thurston, \emph{{On the geometry and dynamics of diffeomorphisms of
  surfaces.}}, {Bull. Amer. Math. Soc.} \textbf{19} ({1988}), no.~2,
  {417--431}.

\bibitem{Wu18}
Yandi Wu, \emph{Orbifold train tracks on simple hyperbolic braids},
  Undergraduate Thesis, Berkeley (2018).

\end{thebibliography}

\end{document}